\newcommand{\vol}{\mathrm{Vol}}
\newcommand{\C}{\mathcal{C}}
\newcommand{\R}{\mathbb{R}}
\newcommand{\N}{\mathbb{N}}
\newcommand{\hess}{\mathrm{Hess}}
\newcommand{\Ric}{\mathrm{Ric}}
\newcommand{\Spin}{\mathrm{Spin}}
\newcommand{\SO}{\mathrm{SO}}
\newcommand{\Cl}{\mathbb{C}\mathrm{l}}
\newcommand{\del}{\partial}
\newcommand{\hatdot}{\hat{\,\cdot\,}}
\renewcommand{\O}{\mathcal{O}}
\newtheorem{theorem}{Theorem}[section]
\newtheorem{corollary}[theorem]{Corollary}
\newtheorem{lemma}[theorem]{Lemma}
\newtheorem{definition}[theorem]{Definition}
\newtheorem{proposition}[theorem]{Proposition}
\newtheorem*{remark}{Remark}
\numberwithin{equation}{section}
\title[Positive mass and Dirac operators on weighted manifolds and SMMS's]{Positive mass and Dirac operators on weighted manifolds and smooth metric measure spaces}
\author[Law]{Michael B. Law}
\author[Lopez]{Isaac M. Lopez}
\author[Santiago]{Daniel Santiago}
\address{MIT, Department of Mathematics, 77 Massachusetts Avenue, Cambridge, MA 02139, USA.}
\email{mikelaw@mit.edu, imlopez@mit.edu, dsantiag@mit.edu}
\begin{document}

\begin{abstract}
    We establish a weighted positive mass theorem which unifies and generalizes results of Baldauf--Ozuch and Chu--Zhu. Our result is in fact equivalent to the usual positive mass theorem, and can be regarded as a positive mass theorem for smooth metric measure spaces. We also study Dirac operators on certain warped product manifolds associated to smooth metric measure spaces. Applications of this include, among others, an alternative proof for a special case of our positive mass theorem, eigenvalue bounds for the Dirac operator on closed spin manifolds, and a new way to understand the weighted Dirac operator using warped products.
\end{abstract}

\maketitle

\section{Introduction} \label{sec:intro}

One of the crowning achievements in contemporary geometry is the positive mass theorem in general relativity. We recall the Riemannian version of this theorem, referring to \S\ref{subsec:asymp-Euc} for the definitions of asymptotically Euclidean (AE) manifolds and ADM mass used in this paper.
\begin{theorem}[\cite{schoen1979PMT1,schoen1979PMT2,witten1981new}] \label{thm:PMT}
    Let $(M^n,g)$, $n \geq 3$ be an AE manifold of order $\tau > \frac{n-2}{2}$, and assume that $3 \leq n \leq 7$ or $M$ is spin. If $(M^n,g)$ has nonnegative scalar curvature $R \geq 0$, then it has nonnegative ADM mass $\mathfrak{m}(g) \geq 0$, with equality if and only if $(M^n,g)$ is isometric to $(\R^n,\delta_{ij})$.
\end{theorem}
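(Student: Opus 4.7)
The plan is to use two complementary methods, each addressing one of the hypotheses. For spin manifolds in any dimension $n \geq 3$, I would follow \emph{Witten's spinor argument}; for the non-spin case with $3 \leq n \leq 7$, I would appeal to the \emph{Schoen--Yau minimal hypersurface method}, inducting on dimension.

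To execute Witten's approach, first fix a constant Euclidean spinor $\psi_0$ in the AE chart at infinity and solve the Dirac equation $D\psi = 0$ with $\psi - \psi_0 \to 0$ as $|x| \to \infty$. This is the core analytic step: one shows that $D$ is Fredholm on appropriate weighted Sobolev spaces (using the order condition $\tau > \frac{n-2}{2}$), verifies injectivity via integration by parts against $R \geq 0$, and hence solves the desired boundary value problem. Next apply the Lichnerowicz--Schr\"odinger--Weitzenb\"ock identity $D^2 = \nabla^*\nabla + \frac{R}{4}$, integrate over a large coordinate ball $B_r$, and pass to the limit $r \to \infty$. The bulk integral $\int_M \left(|\nabla\psi|^2 + \tfrac{R}{4}|\psi|^2\right) d\vol$ is nonnegative, while the boundary term converges to a positive dimensional constant times $\mathfrak{m}(g)|\psi_0|^2$; rearranging yields $\mathfrak{m}(g) \geq 0$. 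In the equality case $\mathfrak{m}(g) = 0$, one deduces $\nabla\psi = 0$ for every admissible $\psi_0$, producing a maximal family of parallel spinors; this forces the metric to be Ricci-flat and then flat, yielding an isometry with $(\R^n,\delta_{ij})$.

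For the non-spin case, suppose for contradiction that $\mathfrak{m}(g) < 0$. A standard conformal perturbation produces a nearby AE metric with $R > 0$ strictly and still negative mass. In dimension three, find an area-minimizing surface running out to infinity; the stability inequality together with the Gauss equation and Gauss--Bonnet produces a contradiction. For $3 < n \leq 7$, one inducts: the minimal hypersurface inherits $R \geq 0$ after an appropriate conformal change and has strictly negative ADM-type mass in one lower dimension, contradicting the inductive hypothesis. The main obstacles in both routes are analytic: in Witten's proof, ensuring solvability of the Dirac equation with the prescribed asymptotics and justifying the convergence of the boundary integral to the ADM mass on an AE manifold of order $\tau > \frac{n-2}{2}$; in the Schoen--Yau proof, establishing existence and regularity of the minimizing hypersurface, which is precisely what forces the dimension restriction $n \leq 7$.
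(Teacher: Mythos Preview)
The paper does not give its own proof of this theorem: Theorem~\ref{thm:PMT} is stated with citations to Schoen--Yau and Witten and is used throughout as a black box (for instance in the proof of Theorem~\ref{thm:main-consolidated}). The only discussion the paper offers is the brief sketch of Witten's argument in the introduction --- find a Witten spinor, apply the Lichnerowicz formula, and obtain the mass formula~\eqref{eq:WittenFormula} --- which your spin-case outline matches. Your Schoen--Yau sketch for $3 \leq n \leq 7$ is likewise a faithful summary of the cited literature, though the paper says nothing about it beyond attribution. In short, there is no ``paper's own proof'' to compare against here; your proposal is a correct high-level account of the standard arguments being cited.
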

The positive mass theorem was proved by Schoen and Yau in the $3 \leq n \leq 7$ case and by Witten in the spin case.\footnote{Mathematical accounts of Witten's proof can be found in \cite{lee1987yamabe}, or in \cite{parker1982Witten} for the spacetime case.}
Among the many related developments that have since arisen, the most relevant to this paper are the generalizations of the positive mass theorem to \emph{weighted manifolds}. These were recently established by Baldauf--Ozuch \cite{baldauf2022spinors} in the spin case and Chu--Zhu \cite{chu2023nonspin} in the $3 \leq n \leq 7$ case.

A weighted manifold $(M^n,g,f)$ is a Riemannian manifold $(M^n,g)$ with a weight $f \in \C^\infty(M)$ which defines the measure $e^{-f}d\vol_g$. They were first studied by Lichnerowicz \cite{lichnerowicz1970riem,lichnerowicz1971kah} and appear in many parts of mathematics, such as in Ricci flow thanks to the work of Perelman \cite{perelman2002entropy}. Perelman characterized Ricci flow as the gradient flow of the functional $\mathcal{F}$ defined on weighted manifolds by
\begin{equation}
    \mathcal{F}(M,g,f) := \int_M R_f e^{-f} d\vol_g,
\end{equation}
where $R_f = R + 2\Delta f - |\nabla f|^2$ is the \emph{weighted scalar curvature} (or \emph{$P$-scalar curvature}). Besides, weighted manifolds find applications in physics through the Brans--Dicke theory of scalar-tensor gravitation \cite{woolgar2013scalar,galloway2014cosmological} as well as theories involving Kaluza--Klein compactifications \cite{deluca2021leaps}.

Various facts about manifolds with positive (resp. nonnegative) scalar curvature are known to generalize to weighted manifolds with positive (resp. nonnegative) weighted scalar curvature. Results of this flavor include those of e.g. \cite{fan2008topology,abedin2017p,deng2021curvature}, as well as the weighted positive mass theorem described next. In \cite{baldauf2022spinors}, Baldauf and Ozuch define the \emph{weighted mass} $\mathfrak{m}_f(g)$ of an AE weighted manifold $(M^n,g,f)$ by
\begin{equation} \label{eq:weighted-mass}
    \mathfrak{m}_f(g): = \mathfrak{m}(g) + 2\lim_{\rho \rightarrow \infty}\int_{S_\rho}\langle \nabla f,\mathbf{n} \rangle e^{-f}dA,
\end{equation}
where $S_\rho$ is a coordinate sphere in the end of $M$, $\mathbf{n}$ is the Euclidean outward unit normal, and $dA$ is the Euclidean area element. The weighted positive mass theorem reads as follows. (The weighted H\"older spaces $\C^{k,\alpha}_{\beta}(M)$ are defined in Definition \ref{defn:weightedHolder}.)
\begin{theorem}[\cite{baldauf2022spinors,chu2023nonspin}] \label{thm:weightedPMT}
    Let $(M^n,g,f)$, $n \geq 3$ be an AE weighted manifold of order $\tau > \frac{n-2}{2}$, and assume $f \in \C^{2,\alpha}_{-\tau}(M)$ and $R_f \in L^1(M,g)$.
    \begin{enumerate}[label=(\alph*)]
        \item Suppose $M$ is spin and $R_f \geq 0$. Then $\mathfrak{m}_f(g) \geq 0$, with equality if and only if $(M^n,g)$ is isometric to $(\R^n,\delta_{ij})$ and $\int_{\R^n} (\Delta_f f)e^{-f} \, dx = 0$.
        \item Suppose $3 \leq n \leq 7$ and $R_f \geq 0$, with slightly more decay on $f$ and $g$ (omitting details). Then $\mathfrak{m}_f(g) \geq 0$, with equality if and only if $(M^n,g)$ is isometric to $(\R^n,\delta_{ij})$ and $f \equiv 0$.
    \end{enumerate}
\end{theorem}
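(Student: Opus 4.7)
The strategy is to reduce Theorem \ref{thm:weightedPMT} to the classical positive mass theorem (Theorem \ref{thm:PMT}) via a carefully chosen conformal change of metric. This handles both parts (a) and (b) simultaneously by deferring to Theorem \ref{thm:PMT} in the spin and low-dimensional settings respectively, and it makes the equivalence asserted in the abstract transparent (the converse implication is the trivial case $f \equiv 0$).

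Given $(M^n, g, f)$ as in the theorem, define the auxiliary metric
\[
    \tilde g := e^{-\frac{2f}{n-1}}\, g = u^{4/(n-2)}\, g, \qquad u := e^{-\frac{n-2}{2(n-1)}\, f}.
\]
The exponent is engineered so that substituting $u$ into the conformal-Laplacian identity $R_{\tilde g}\, u^{(n+2)/(n-2)} = -\tfrac{4(n-1)}{n-2}\Delta u + R u$ produces an expression involving $R_f$ with a manifestly nonnegative residual. A direct computation yields
\[
    R_{\tilde g} = e^{\frac{2f}{n-1}} \left( R_f + \frac{1}{n-1}|\nabla f|^2 \right),
\]
so $R_f \geq 0$ implies $R_{\tilde g} \geq 0$. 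The decay hypothesis $\tau > (n-2)/2$ gives $|\nabla f|^2 \in L^1(M,g)$, which combined with $R_f \in L^1$ yields $R_{\tilde g} \in L^1(M, \tilde g)$. The same decay guarantees $(M, \tilde g)$ is AE of the same order $\tau$, and the spin structure of $g$ (if present) is inherited by the conformal $\tilde g$.

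The second key step is the mass identification $\mathfrak{m}(\tilde g) = \mathfrak{m}_f(g)$. I would compute the leading asymptotic change of $\partial_i \tilde g_{ij} - \partial_j \tilde g_{ii}$ under the conformal rescaling, which contributes a boundary term proportional to $\partial_j u$. Substituting $\nabla u = -\tfrac{n-2}{2(n-1)}\, u\, \nabla f$ and tracking the normalization constants produces a correction of exactly $2\lim_{\rho \to \infty}\int_{S_\rho}\langle\nabla f, \mathbf{n}\rangle dA$ to $\mathfrak{m}(g)$. The difference between this and the $e^{-f}$-weighted flux appearing in $\mathfrak{m}_f(g)$ is $2\lim\int\langle\nabla f, \mathbf{n}\rangle(1 - e^{-f})dA$, whose integrand is $O(r^{n-2-2\tau})$ and thus vanishes in the limit precisely because $\tau > (n-2)/2$.

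With both identifications in hand, Theorem \ref{thm:PMT} applied to $(M,\tilde g)$ gives $\mathfrak{m}_f(g) = \mathfrak{m}(\tilde g) \geq 0$. In the equality case, classical rigidity forces $(M,\tilde g) \cong (\R^n, \delta_{ij})$, so $R_{\tilde g} \equiv 0$; the displayed formula then makes $R_f$ and $|\nabla f|^2$ vanish identically, so $f$ is constant and the decay condition yields $f \equiv 0$, whence $\tilde g = g$. This delivers the strong rigidity of part (b). For part (a), the same argument in the spin case produces the same strong conclusion; the weaker integral condition $\int_{\R^n}(\Delta_f f)e^{-f}dx = 0$ stated there is then automatic, since on Euclidean space it integrates by parts to $-\lim_{\rho \to \infty}\int_{S_\rho}\langle\nabla f, \mathbf{n}\rangle e^{-f} dA$, a term that vanishes whenever $\mathfrak{m}_f(g) = \mathfrak{m}(g)$.

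The main obstacle is the mass identification: one must carefully track the conformal correction to the ADM boundary integral and verify that the normalization constants and exponentially subleading terms conspire to reproduce exactly the coefficient $2$ in front of the weighted flux in the definition of $\mathfrak{m}_f(g)$. The decay threshold $\tau > (n-2)/2$ is precisely the hypothesis that makes each of these asymptotic checks succeed.
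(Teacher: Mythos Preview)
Your proposal is correct and follows essentially the same approach the paper uses to prove its own Theorem \ref{thm:main-consolidated} (which subsumes Theorem \ref{thm:weightedPMT}): the conformal change $\tilde g = e^{-2f/(n-1)}g$, the scalar curvature identity $R_{\tilde g} = e^{2f/(n-1)}(R_f + \tfrac{1}{n-1}|\nabla f|^2)$ (Lemma \ref{lem:R-of-conformal}), the mass identification $\mathfrak m(\tilde g) = \mathfrak m_f(g)$ (Lemma \ref{lem:equality-of-masses}), and the rigidity argument are all the same. Note that the paper describes the \emph{original} proofs of Theorem \ref{thm:weightedPMT} as adaptations of Witten's spinor argument and Schoen--Yau's minimal surface argument respectively; your conformal reduction is the paper's alternative route, and as you observe it actually yields the sharper rigidity $f\equiv 0$ in the spin case as well.
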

Baldauf and Ozuch prove (a) by adapting Witten's proof to the weighted setting, while Chu and Zhu prove (b) by adapting Schoen and Yau's argument. We elaborate on the former for the sake of later discussion. Recall that Witten's proof hinges on finding a spinor $\phi$ on $M$ such that
\begin{enumerate}[label=(\roman*)]
    \item $D\phi = 0$, where $D$ is the Dirac operator.
    \item $\phi$ asymptotes to a constant unit norm spinor at infinity.
\end{enumerate}
A spinor satisfying (i) and (ii) is called a \emph{Witten spinor}.
These properties are used to prove \emph{Witten's formula} for the mass in terms of $\phi$ and the scalar curvature $R$:
\begin{equation} \label{eq:WittenFormula}
    \mathfrak{m}(g) = 4\int_M \left(|\nabla \phi|^2 + \frac{1}{4}R|\phi|^2\right)d\vol_g.
\end{equation}
Thus $\mathfrak{m}(g) \geq 0$ if $R \geq 0$. Baldauf and Ozuch proceed similarly to prove Theorem \ref{thm:weightedPMT}(a); they find a \emph{weighted Witten spinor} $\phi$ with properties analogous to (i) and (ii):
\begin{enumerate}[label=(\roman*')]
    \item $D_f\phi = 0$, where $D_f$ is the \emph{weighted Dirac operator}
    \begin{equation} \label{eq:weighted-Dirac}
        D_f = D - \frac{1}{2}\nabla f \cdot.
    \end{equation}
    \item $\phi$ asymptotes to a constant unit norm spinor at infinity.
\end{enumerate}
These properties lead to a formula for the weighted mass analogous to \eqref{eq:WittenFormula}:
\begin{equation} \label{eq:WeightedWittenFormula}
    \mathfrak{m}_f(g) = 4\int_M \left(|\nabla \phi|^2 + \frac{1}{4}R_f|\phi|^2\right) e^{-f} d\vol_g,
\end{equation}
from which Theorem \ref{thm:weightedPMT}(a) follows.

We now introduce the main results of this paper.

\subsection{Equivalence between weighted and unweighted positive mass theorems} \label{subsec:equivalence}

The unweighted positive mass theorem (Theorem \ref{thm:PMT}) is the $f=0$ case of the weighted positive mass theorem (Theorem \ref{thm:weightedPMT}). Our first result says that these theorems are actually \emph{equivalent}. We also sharpen the conclusions of Theorem \ref{thm:weightedPMT}:
\begin{theorem} \label{thm:main-consolidated}
    Let $(M^n,g,f)$, $n \geq 3$ be an AE weighted manifold of order $\tau > \frac{n-2}{2}$, and assume $f \in \C^{2,\alpha}_{-\tau}(M)$ and $R_f \in L^1(M,g)$. Also suppose $3 \leq n \leq 7$ or $M$ is spin.
    \begin{enumerate}[label=(\alph*)]
        \item If $R_f \geq 0$, then $\mathfrak{m}_f(g) \geq 0$, with equality if and only if $(M^n,g)$ is isometric to $(\R^n,\delta_{ij})$ and $f \equiv 0$. 
        \item The result of part (a) is equivalent to the unweighted positive mass theorem (Theorem \ref{thm:PMT}).
        \item If $R_f \geq -\frac{1}{n-1}|\nabla f|^2$, then we still have $\mathfrak{m}_f(g) \geq 0$.
    \end{enumerate}
\end{theorem}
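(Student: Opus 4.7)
The plan is to deduce parts (a) and (c) from the unweighted Theorem \ref{thm:PMT} by a single conformal change, and to observe that this reduction automatically establishes the equivalence claimed in part (b).

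I would introduce the metric $\tilde{g} := e^{-2f/(n-1)} g$. Under a conformal rescaling $\bar{g} = e^{2\varphi} g$, the scalar curvature transforms as $\bar{R} = e^{-2\varphi}(R - 2(n-1)\Delta \varphi - (n-1)(n-2)|\nabla \varphi|^2)$, and substituting $\varphi = -f/(n-1)$ gives the key identity
$$\tilde{R} = e^{2f/(n-1)}\left(R + 2\Delta f - \tfrac{n-2}{n-1}|\nabla f|^2\right) = e^{2f/(n-1)}\left(R_f + \tfrac{1}{n-1}|\nabla f|^2\right).$$
In particular, the hypothesis of (c) is exactly $\tilde{R} \geq 0$, and the stronger $R_f \geq 0$ from (a) yields $\tilde{R} \geq \tfrac{1}{n-1}e^{2f/(n-1)}|\nabla f|^2$. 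Next I would verify that $\tilde{g}$ meets the hypotheses of Theorem \ref{thm:PMT}: since $f \in \C^{2,\alpha}_{-\tau}(M)$, the conformal factor is $1 + O(r^{-\tau})$ in $\C^{2,\alpha}$, so $\tilde{g}$ is AE of order $\tau$; the integrability $\tilde{R} \in L^1(\tilde{g})$ follows from $R_f \in L^1(g)$ together with $|\nabla f|^2 = O(r^{-2\tau-2})$, which is integrable precisely when $\tau > (n-2)/2$.

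The key computational step is the mass identity $\mathfrak{m}(\tilde{g}) = \mathfrak{m}_f(g)$. Expanding $\partial_k \tilde{g}_{ij} = e^{-2f/(n-1)}\bigl(\partial_k g_{ij} - \tfrac{2}{n-1} g_{ij} \partial_k f\bigr)$ in the ADM flux integrand produces, at infinity, the extra contribution $2\partial_i f$, which integrates against the Euclidean normal to yield precisely the correction term $2 \lim_{\rho \to \infty} \int_{S_\rho} \langle \nabla f, \mathbf{n}\rangle dA$ from \eqref{eq:weighted-mass}. The $e^{-f}$ factor in the definition of $\mathfrak{m}_f(g)$ is harmless in the limit, because the difference $\int_{S_\rho} \langle \nabla f, \mathbf{n}\rangle (e^{-f} - 1) dA$ is $O(\rho^{n-2-2\tau}) \to 0$ under the hypothesis $\tau > (n-2)/2$.

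Applying Theorem \ref{thm:PMT} to $(M, \tilde{g})$ then gives $\mathfrak{m}_f(g) = \mathfrak{m}(\tilde{g}) \geq 0$, which proves (a) and (c). For the equality case of (a), $\mathfrak{m}_f(g) = 0$ forces $(M, \tilde{g}) \cong (\R^n, \delta_{ij})$ by rigidity in Theorem \ref{thm:PMT}, so $\tilde{R} \equiv 0$; since $R_f$ and $\tfrac{1}{n-1}|\nabla f|^2$ both contribute nonnegatively to $\tilde{R}$, each must vanish, and the decay $f \to 0$ forces $f \equiv 0$, whence $g = \tilde{g}$ is Euclidean. Part (b) is then immediate: one direction is the case $f \equiv 0$, and the other is the reduction above. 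The main obstacle I anticipate is the careful matching of normalization constants and decay rates in the mass identity — in particular, confirming that the exponential weight $e^{-f}$ can be dropped in the limiting boundary integral is precisely the reason why the decay threshold $\tau > (n-2)/2$ appears in both this result and the unweighted Theorem \ref{thm:PMT}.
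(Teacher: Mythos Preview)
Your proposal is correct and follows essentially the same approach as the paper: the conformal change $\tilde g = e^{-2f/(n-1)}g$, the scalar curvature identity $\tilde R = e^{2f/(n-1)}\bigl(R_f + \tfrac{1}{n-1}|\nabla f|^2\bigr)$, the mass identity $\mathfrak{m}(\tilde g) = \mathfrak{m}_f(g)$, and the reduction to Theorem~\ref{thm:PMT} (including the rigidity argument) all match the paper's Lemmas~\ref{lem:R-of-conformal} and~\ref{lem:equality-of-masses} and its proof of Theorem~\ref{thm:main-consolidated}.
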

Theorem \ref{thm:main-consolidated}(a) improves on Theorem \ref{thm:weightedPMT} due to the sharper rigidity in the spin case. Theorem \ref{thm:main-consolidated}(b) follows from the proof of part (a), which uses a suitable conformal change of metric to reduce to the unweighted positive mass theorem. Theorem \ref{thm:main-consolidated}(c) follows directly from the associated computations, and further strengthens the result of part (a) by weakening the lower bound on $R_f$ that guarantees nonnegativity of the weighted mass.

\subsection{A positive mass theorem for smooth metric measure spaces}

Generalizing beyond weighted manifolds, one arrives at \emph{smooth metric measure spaces} (SMMSs).
The following definition is from \cite{case2012smooth}, which introduces SMMSs more thoroughly and unifies the perspectives of Bakry--\'Emery, Chang--Gursky--Yang, and Perelman on the subject.
\begin{definition}
    A \emph{smooth metric measure space} (SMMS) is a 4-tuple $\mathcal{M} = (M^n,g,e^{-f} d\vol_g,m)$ where $(M^n,g)$ is a Riemannian manifold, $e^{-f}d\vol_g$ is a measure defined by a weight $f \in \C^\infty(M)$, and $m \in \R$.
\end{definition}

The analogs of Ricci and scalar curvatures on an SMMS with $m \neq 0$ are the \emph{$m$-Bakry--Émery Ricci} and \emph{scalar curvatures}, respectively
\begin{align}
    \Ric^m_f &:= \Ric + \hess_f - \frac{1}{m}df \otimes df, \label{eq:m-weighted-ricci} \\
    R^m_f &:= R_f - \frac{1}{m}|\nabla f|^2 = R + 2\Delta f - \frac{m+1}{m}|\nabla f|^2. \label{eq:m-weighted-scalar}
\end{align}
As $m \to \infty$, we have $R^m_f \to R_f$ and $\Ric^m_f \to \Ric_f$, where $\Ric_f = \Ric + \hess_f$ is commonly called the \emph{Bakry--\'Emery Ricci curvature}. Because $R_f$ and $\Ric_f$ typically appear in the context of weighted manifolds $(M,g,f)$, e.g. in Ricci flow, weighted manifolds can be viewed as SMMSs with $m=\infty$. Other values of $m$ bear significance in geometry and physics (see e.g. \cite{case2012smooth,woolgar2016cosmological} and \cite[pp.1081--82]{wu2016weitzenbock}). Of special importance to us are SMMSs with $m \in \N$, in which case $\Ric^m_f$ and $R^m_f$ arise from certain warped products over $(M,g)$. Specifically, if $(F^m,h)$ is an $m$-dimensional scalar-flat manifold, then the warped product $(M^n \times F^m, \bar{g} = g \oplus e^{-\frac{2f}{m}}h)$ has scalar curvature $R_f^m$,
and its Ricci tensor satisfies $\Ric_{\overline{g}}(X,Y) = \Ric_f^m(X,Y)$ for all vector fields $X, Y \in TM$ \cite[Proposition 9.106]{besse2007einstein}.

\begin{definition} \label{def:SMMS-AE-and-mass}
    An SMMS $\mathcal{M} = (M^n,g,e^{-f}d\vol_g,m)$ is called \emph{asymptotically Euclidean} (AE) if $(M^n,g)$ is AE. If $\mathcal{M}$ is an AE SMMS, then its \emph{mass} is defined to simply be the weighted mass of $(M^n,g,f)$:
    \begin{equation} \label{eq:equality-of-masses}
        \mathfrak{m}(\mathcal{M}) := \mathfrak{m}_f(g).
    \end{equation}    
\end{definition}

The mass is curiously independent of $m$, but is motivated as follows. If $m \in \N$, then there is a close connection between the SMMS $\mathcal{M}$ and the warped product $(M^n \times F^m, \bar{g} = g \oplus e^{-\frac{2f}{m}}h)$ as discussed above. Normalizing so that $(F^m,h)$ has unit volume, it is natural to define the mass of $\mathcal{M}$ similarly to the mass of an AE manifold:
\begin{equation} \label{eq:mass-SMMS-1}
    \mathfrak{m}(\mathcal{M}) := \lim_{\rho\to\infty} \int_{S^M_\rho \times F} (\del_i \bar{g}_{ij} - \del_j \bar{g}_{aa}) \mathbf{n}_j \, dA \, d\vol_h.
\end{equation}
(See Definition \ref{def:SMMS-mass} for a precise definition.) We will show that this coincides with the weighted mass $\mathfrak{m}_f(g)$. This leads us to define the mass of an AE SMMS, with $m$ not necessarily in $\N$, as its weighted mass.

As $R^m_f$ \eqref{eq:m-weighted-scalar} is the analog of scalar curvature on an SMMS, a positive mass theorem for SMMSs should assert that an AE SMMS with $R^m_f \geq 0$ has $\mathfrak{m}(\mathcal{M}) \geq 0$. Using Theorem \ref{thm:main-consolidated}, we will see that such a theorem does indeed hold for $m$ outside the interval $(1-n,0]$. This is our second main result.
\begin{theorem} \label{thm:main-SMMS-PMT}
    Let $\mathcal{M} = (M^n,g,e^{-f}d\vol_g,m)$ be an AE SMMS of order $\tau > \frac{n-2}{2}$ with $m \in \R \setminus (1-n,0]$, and suppose $3 \leq n \leq 7$ or $M$ is spin. Also assume $f \in \C^{2,\alpha}_{-\tau}(M)$ and $R_f \in L^1(M,g)$. If $R^m_f \geq 0$, then $\mathfrak{m}(\mathcal{M}) \geq 0$, with equality if and only if $(M^n,g)$ is isometric to $(\R^n,\delta_{ij})$ and $f \equiv 0$.
\end{theorem}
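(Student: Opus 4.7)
The plan is to reduce Theorem \ref{thm:main-SMMS-PMT} to Theorem \ref{thm:main-consolidated} by splitting on the sign of $m$ and using the identity $R^m_f = R_f - \frac{1}{m}|\nabla f|^2$ to translate the curvature hypothesis. For $m > 0$, the hypothesis $R^m_f \geq 0$ gives $R_f \geq \frac{1}{m}|\nabla f|^2 \geq 0$, so Theorem \ref{thm:main-consolidated}(a) applies directly and yields both the mass inequality $\mathfrak{m}(\mathcal{M}) \geq 0$ and the rigidity statement $(M, g) \cong (\R^n, \delta)$ with $f \equiv 0$. For $m \leq 1-n$, we instead have $\frac{1}{m} \in [-\frac{1}{n-1}, 0)$, so $R_f \geq \frac{1}{m}|\nabla f|^2 \geq -\frac{1}{n-1}|\nabla f|^2$, and Theorem \ref{thm:main-consolidated}(c) immediately delivers $\mathfrak{m}(\mathcal{M}) = \mathfrak{m}_f(g) \geq 0$.

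For the rigidity in the regime $m \leq 1-n$, my strategy is to employ the conformal change $\tilde g = e^{-2f/(n-1)} g$ that also underlies the proof of Theorem \ref{thm:main-consolidated}(b). A standard conformal calculation gives
\begin{equation*}
  R_{\tilde g} \;=\; e^{2f/(n-1)}\bigl(R_f + \tfrac{1}{n-1}|\nabla f|^2\bigr) \;=\; e^{2f/(n-1)}\bigl(R^m_f + \bigl(\tfrac{1}{m}+\tfrac{1}{n-1}\bigr)|\nabla f|^2\bigr) \;\geq\; 0,
\end{equation*}
and the decay $f \in \C^{2,\alpha}_{-\tau}(M)$ ensures $(M, \tilde g)$ is itself AE with $\mathfrak{m}(\tilde g) = \mathfrak{m}_f(g)$. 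Consequently, if $\mathfrak{m}_f(g) = 0$, the unweighted positive mass theorem (Theorem \ref{thm:PMT}) forces $(M, \tilde g) \cong (\R^n, \delta)$ and $R_{\tilde g} \equiv 0$. For $m < 1-n$ strictly, the coefficient $\frac{1}{m}+\frac{1}{n-1}$ is strictly positive, so $R_{\tilde g} \equiv 0$ combined with $R^m_f \geq 0$ forces $|\nabla f| \equiv 0$; the asymptotic decay then yields $f \equiv 0$, whence $g = \tilde g \cong \delta$. The boundary case $m = 1-n$ is more delicate because the conformal reduction alone leaves $|\nabla f|$ unconstrained; here I would invoke the weighted Witten formula \eqref{eq:WeightedWittenFormula} in the spin setting --- with the consequent identity $R_f = -\frac{1}{n-1}|\nabla f|^2$ substituted in --- to extract the additional rigidity forcing $f \equiv 0$, or analyze $(\R^n, e^{2f/(n-1)}\delta)$ directly.

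The main obstacle is establishing the identity $\mathfrak{m}(\tilde g) = \mathfrak{m}_f(g)$, which is the technical heart of Theorem \ref{thm:main-consolidated}(b) and amounts to careful tracking of boundary integrals at infinity under the conformal factor $e^{-2f/(n-1)} = 1 + O(|x|^{-\tau})$. The secondary obstacle is the $m = 1-n$ edge case in the rigidity, which falls outside the reach of the direct conformal-plus-unweighted-PMT reduction.
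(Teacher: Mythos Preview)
Your approach is essentially identical to the paper's: the paper treats both sign regimes at once via the single inequality $0 \leq R^m_f \leq R_f + \frac{1}{n-1}|\nabla f|^2$ (valid for all $m \notin (1-n,0]$, so your case split on the sign of $m$ is harmless but unnecessary), applies Theorem~\ref{thm:main-consolidated}(c) for the mass inequality, and for rigidity passes to $\tilde g = e^{-2f/(n-1)}g$ using Lemmas~\ref{lem:R-of-conformal} and~\ref{lem:equality-of-masses} together with the unweighted PMT, exactly as you outline. Your flag on the boundary case $m = 1-n$, where $\frac{1}{m}+\frac{1}{n-1}=0$ and the final rigidity inequality becomes vacuous, is well-placed; the paper's proof does not single this case out either, so on that point you are actually more careful than the published argument.
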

\begin{remark}
    In \cite{dai2004positive}, Dai proved a positive mass theorem for spin manifolds asymptotic to $(\R^n \setminus \overline{B_R(0)}) \times X$, where $X$ is a Calabi--Yau (hence scalar-flat) manifold. Dai defines a mass for such manifolds, which coincides with \eqref{eq:mass-SMMS-1} up to a factor if the manifold is \emph{globally} the product of an AE manifold and $X$. Therefore, Theorem \ref{thm:main-SMMS-PMT} implies Dai's positive mass theorem in this special case. The advantange of this approach is that it avoids the Mazzeo--Melrose fibered boundary calculus that Dai used to prove the general case of his result.
\end{remark}

\subsection{Warped product Dirac operators and applications} \label{subsec:intro-warped}

Let $\mathcal{M} = (M^n,g,e^{-f}d\vol_g,m)$ be a SMMS with $m \in \N$, and suppose $M$ is spin. Let $(F^m,h)$ be a spin manifold and form the warped product
\begin{equation} \label{eq:warped}
    (M^n \times F^m,\overline{g} = g \oplus e^{-\frac{2f}{m}}h).
\end{equation}
Using \cite[\S 3.2]{roos2020dirac}, we will identify the spinor bundle $\bar{\Sigma}(M \times F)$ of the warped product in terms of the spinor bundles $\Sigma M$ and $\Sigma F$, and describe the Dirac operator on $\bar{\Sigma}(M \times F)$ in terms of operators on $\Sigma M$ and $\Sigma F$. For illustration purposes, assume that $M$ is even-dimensional. We will see that $\bar{\Sigma}(M \times F) \cong \Sigma M \otimes \Sigma F$ as vector bundles, and that the Dirac operator on $\bar{\Sigma}(M \times F)$ acts in the following way. The next theorem is a special case of Theorem \ref{thm:warped-dirac}, our third main result.
\begin{theorem} \label{thm:dirac-op-intro}
    Let $M$ be even-dimensional, $\phi \in \Gamma(\Sigma M)$ be a spinor on $M$ and $\nu \in \Gamma(\Sigma F)$ be a parallel spinor, i.e. $\nabla\nu = 0$. Then the Dirac operator $\bar{D}$ on $\bar{\Sigma}(M \times F) \cong \Sigma M \otimes \Sigma F$ satisfies
    \begin{equation}
        \bar{D}(\phi \otimes \nu) = (D_f\phi) \otimes \nu,
    \end{equation}
    where $D_f$ is the weighted Dirac operator on $\Sigma M$, defined in \eqref{eq:weighted-Dirac}.
\end{theorem}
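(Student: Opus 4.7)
The plan is to derive a general formula for the Dirac operator on the warped product, then specialize using the parallelism of $\nu$. Following the conventions of \cite{roos2020dirac}, I would proceed in three steps.

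\textbf{Step 1: Spinor bundle and frames.} Because $M$ is even-dimensional, $\bar{\Sigma}(M \times F) \cong \Sigma M \otimes \Sigma F$ as Clifford modules: a horizontal vector $X \in TM$ acts as $X \cdot \otimes \mathrm{id}$, while a vertical $Y \in TF$ acts as $\omega_M \cdot \otimes Y \cdot$, where $\omega_M$ is the complex volume element of $M$. Pick local orthonormal frames $\{e_i\}_{i=1}^n$ for $g$ on $M$ and $\{f_\alpha\}_{\alpha=1}^m$ for $h$ on $F$; then $\bar{e}_i := e_i$ and $\bar{f}_\alpha := e^{f/m} f_\alpha$ form an orthonormal frame for $\bar{g}$. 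Writing $\sigma := e^{-f/m}$ for the warping factor, O'Neill's warped-product identities give $\bar{\nabla}_{\bar{e}_i} \bar{f}_\alpha = e_i(\log \sigma)\,\bar{f}_\alpha$ along with the other mixed Christoffel terms needed below.

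\textbf{Step 2: Warped-product Dirac formula.} Lifting $\bar{\nabla}$ to the spin connection on $\bar{\Sigma}(M \times F)$ and expanding $\bar{D} = \sum_i \bar{e}_i \cdot \bar{\nabla}_{\bar{e}_i} + \sum_\alpha \bar{f}_\alpha \cdot \bar{\nabla}_{\bar{f}_\alpha}$, the horizontal sum produces $(D\phi) \otimes \nu$ plus a contribution from $\bar{\nabla}_{e_i}\bar{f}_\alpha$ which, after Clifford-contraction with $\bar{e}_i$ and summation over the $m$ vertical directions, collapses to $\tfrac{m}{2}\nabla \log \sigma \cdot \phi \otimes \nu$. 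The vertical sum, after rescaling $\bar{f}_\alpha = \sigma^{-1} f_\alpha$, yields $\sigma^{-1}\, \omega_M \cdot \phi \otimes D^F \nu$. In total,
\[
\bar{D}(\phi \otimes \nu) = (D\phi) \otimes \nu + \tfrac{m}{2}\, \nabla \log \sigma \cdot \phi \otimes \nu + \sigma^{-1}\, \omega_M \cdot \phi \otimes D^F \nu.
\]
Since $\nu$ is parallel, $D^F \nu = 0$, eliminating the third term. Since $\log \sigma = -f/m$, one has $\tfrac{m}{2} \nabla \log \sigma = -\tfrac12 \nabla f$, so the second term reduces to $-\tfrac12 \nabla f \cdot \phi \otimes \nu$. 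Combining with the first term and invoking the definition \eqref{eq:weighted-Dirac} of $D_f$ yields $\bar{D}(\phi \otimes \nu) = (D_f \phi) \otimes \nu$.

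The main obstacle is the careful bookkeeping in Step 2. One must track signs and Clifford conventions precisely — in particular, the appearance of the chirality operator $\omega_M$ when a vertical vector acts on $\Sigma M \otimes \Sigma F$, and the factor of $\tfrac12$ built into the spin-connection lift — to confirm that the $m$ vertical-frame contributions combine into $\tfrac{m}{2}\nabla\log \sigma \cdot$ rather than some other multiple. Once this coefficient is correct, the delicate cancellation of $m$ between the warping exponent $-f/m$ and the number of vertical directions makes the factor $\tfrac12$ in $D_f = D - \tfrac12 \nabla f \cdot$ appear cleanly, and specialization to parallel $\nu$ is immediate.
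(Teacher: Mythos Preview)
Your overall strategy and target formula are correct and match the paper: compute the Dirac operator on the warped product via the submersion formalism of \cite{roos2020dirac}, obtain
\[
\bar{D}(\phi\otimes\nu)=(D\phi)\otimes\nu+\tfrac{m}{2}\nabla\log\sigma\cdot\phi\otimes\nu+\sigma^{-1}\,\bar{\phi}\otimes D^F\nu,
\]
and then specialize to parallel $\nu$. However, your bookkeeping in Step~2 has a concrete error. For the $\bar{g}$-orthonormal vertical frame $\bar{f}_\alpha=e^{f/m}f_\alpha=\sigma^{-1}f_\alpha$ one has
\[
\bar{\nabla}_{\bar{e}_i}\bar{f}_\alpha
= e_i(\sigma^{-1})\,f_\alpha+\sigma^{-1}\,\bar{\nabla}_{e_i}f_\alpha
= -\sigma^{-1}e_i(\log\sigma)\,f_\alpha+\sigma^{-1}e_i(\log\sigma)\,f_\alpha=0,
\]
so the identity you wrote in Step~1 (which holds for the \emph{unnormalized} lift $f_\alpha$, not for $\bar{f}_\alpha$) is false for the frame you actually use. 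Consequently the horizontal spin connection on $\phi\otimes\nu$ contributes only $(\nabla_{e_i}\phi)\otimes\nu$, and the horizontal sum in $\bar D$ gives exactly $(D\phi)\otimes\nu$ with no extra term.

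The correction $\tfrac{m}{2}\nabla\log\sigma\cdot=-\tfrac12\nabla f\cdot$ in fact arises from the \emph{vertical} sum, via the fiber second fundamental form $T(\bar{f}_\alpha,\bar{f}_\beta)=\tfrac{1}{m}\delta_{\alpha\beta}\nabla f$ (equivalently, the mean curvature of the fibers). This is exactly how the paper obtains it: using \cite[Lemma~6]{roos2020dirac} with $A\equiv 0$ and this $T$, one finds $\bar{\nabla}_{\bar{f}_\alpha}\psi=\phi\otimes\nabla^F_{\bar{f}_\alpha}\nu+\tfrac{1}{2m}\bar{f}_\alpha\cdot\nabla f\cdot\psi$, and Clifford-contracting with $\bar{f}_\alpha$ and summing over $\alpha$ produces $-\tfrac12\nabla f\cdot\psi$. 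Once you relocate the term to the correct place, your argument is complete and essentially identical to the paper's.
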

Thus, the weighted Dirac operator $D_f$ arises naturally from the SMMS $\mathcal{M}$ associated to $M$. This stands in contrast to the motivations for $D_f$ provided by Perelman \cite{perelman2002entropy} and Baldauf--Ozuch \cite{baldauf2022spinors} (where $D_f$ is an \emph{ad hoc} operator allowing identities like the Lichnerowicz formula to be generalized to weighted manifolds), and by Branding and Habib \cite{branding2022eigenvalue} (who exhibit $D_f$ as the Euler--Lagrange operator of a spinorial energy involving the weighted measure $e^{-f}d\vol_g$).

Intriguingly, the fiber dimension $m = \dim F$ does not appear in Theorem \ref{thm:dirac-op-intro}. This may have interesting geometric consequences beyond this paper, and possibly consequences in physics too. If $F$ has special holonomy except quaternionic K\"ahler, then it has nontrivial parallel spinors \cite{wang1989parallel}, so Theorem \ref{thm:dirac-op-intro} can be meaningfully applied to the warped product $(M \times F, \bar{g})$. Such products, and more generally geometries with dimensions `hidden' in special holonomy fibers, are frequently encountered in string theories, so the fiber-dimension independence in Theorem \ref{thm:dirac-op-intro} might prove significant in those contexts. We remark that Dirac operators on product manifolds have been investigated in the setting of collapsing fibrations \cite{ammann1998dirac,lott2002collapsing,roos2020dirac} which is the essence of Kaluza--Klein dimensional reduction. However, to our knowledge, existing work does not examine the dependence of the results (or lack thereof) on the fiber dimension.

This paper gives several applications of Theorem \ref{thm:dirac-op-intro} and its surrounding computations. Firstly, we will use Theorem \ref{thm:dirac-op-intro} to turn facts from unweighted spin geometry on the warped product into facts in weighted spin geometry on $M$. This allows us to systematically reprove some known identities in weighted spin geometry, one being the weighted Witten formula \eqref{eq:WeightedWittenFormula} which in turn gives a second proof of a special case of Theorem \ref{thm:main-SMMS-PMT}.
Other applications pertain to the spectrum of the Dirac operator on closed manifolds, such as using Theorem \ref{thm:dirac-op-intro} to give eigenvalue bounds in terms of $R_f^m$.
Moreover, we will leverage a relationship between the weighted Dirac operator $D_f$ and the conformal metric used to prove Theorem \ref{thm:main-consolidated} to generalize the classical fact that a closed spin manifold with positive scalar curvature admits no nontrivial harmonic spinors:
\begin{corollary} \label{cor:vanishing-thm}
    Let $M$ be a closed spin manifold. If for some $m \in \R \setminus [1-n,0]$ we have $R^m_f \geq 0$ and $R^m_f > 0$ at some point, then $M$ admits no nontrivial harmonic spinors.
\end{corollary}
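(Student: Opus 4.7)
The plan is to reduce to the classical Lichnerowicz vanishing theorem by means of the conformal change $\tilde{g} := e^{-\frac{2f}{n-1}}g$ that underlies the proof of Theorem \ref{thm:main-consolidated}. First, I will apply the standard conformal transformation law for scalar curvature and then substitute $R_f = R_f^m + \frac{1}{m}|\nabla f|^2$ to obtain
\begin{equation}
    R_{\tilde g} = e^{\frac{2f}{n-1}}\left(R_f + \tfrac{1}{n-1}|\nabla f|^2\right) = e^{\frac{2f}{n-1}}\left(R_f^m + \tfrac{m+n-1}{m(n-1)}|\nabla f|^2\right).
\end{equation}

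The crux is then a sign check: the coefficient $\tfrac{m+n-1}{m(n-1)}$ is strictly positive precisely when $m \in \R \setminus [1-n,0]$. Indeed, for $m > 0$ both numerator and denominator are positive, while for $m < 1-n$ both are negative (and for $m \in (1-n,0)$ the sign flips, while $m = 1-n$ gives zero and $m=0$ is excluded from the outset). Under the hypotheses of the corollary, this yields $R_{\tilde g} \geq 0$ on $M$, with strict inequality at any point where $R_f^m > 0$.

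Finally, I will invoke the conformal covariance of the Dirac operator: under the canonical identification of spinor bundles for $g$ and $\tilde g$, any nontrivial harmonic spinor on $(M,g)$ corresponds, after multiplication by the nowhere-vanishing factor $e^{f/2}$, to a nontrivial harmonic spinor on $(M,\tilde g)$. Applying the classical Lichnerowicz--Schr\"odinger identity on the closed manifold $(M,\tilde g)$, the nonnegativity of $R_{\tilde g}$ together with strict positivity at one point forces this transformed spinor to be parallel with identically vanishing norm---a contradiction. The only genuinely delicate step is the sign analysis that pins down the excluded interval $[1-n,0]$; the conformal reduction and the appeal to the classical vanishing theorem are then routine.
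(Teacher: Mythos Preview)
Your proposal is correct and follows essentially the same route as the paper: pass to the conformal metric $\tilde g = e^{-\frac{2f}{n-1}}g$, use the identity $R_{\tilde g} = e^{\frac{2f}{n-1}}(R_f^m + (\tfrac{1}{m}+\tfrac{1}{n-1})|\nabla f|^2)$ together with the sign check on $m$ to get $R_{\tilde g}\geq 0$ and $>0$ somewhere, and then invoke Lichnerowicz on $(M,\tilde g)$. The only cosmetic difference is in the final transfer step: you appeal directly to the conformal covariance of the Dirac operator (harmonic spinors for $g$ correspond to harmonic spinors for $\tilde g$ via multiplication by $e^{f/2}$), whereas the paper factors this through the weighted Dirac operator, using $D_f = e^{-\frac{f}{n-1}}\Phi^{-1}\tilde D\,\Phi$ together with the isospectrality of $D$ and $D_f$---but these two packagings encode the same identity.
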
 
For each $m \in \R \setminus \{0\}$, there is a unique $\mu_m \in \R$ such that $R^m_f = \mu_m$ has a solution $f \in \C^\infty(M)$. While we are unable to combine this with Corollary \ref{cor:vanishing-thm} to get new obstructions to harmonic spinors, we find that the $\mu_m$ yield a family of inequalities $\lambda_1(D)^2 \geq \frac{n}{4(n-1)}\mu_m$ for the lowest eigenvalue of the Dirac operator, interpolating between the (stronger) Friedrich and Hijazi inequalities \cite{friedrich1980eigen,hijazi1986conformal} as $m$ varies.

\subsection*{Organization}

In Section \ref{sec:PMTs}, after stating definitions and conventions, we prove our positive mass theorems, Theorem \ref{thm:main-consolidated} and Theorem \ref{thm:main-SMMS-PMT}. We also motivate our definition of mass for SMMSs. In Section \ref{sec:spin}, we study the spin geometry of warped products, eventually relating the connection and Dirac operator on $(M \times F,\bar{g})$ to the corresponding objects on $M$ and $F$ in Theorem \ref{thm:warped-dirac}. In Section \ref{sec:applications-Dirac} we discuss applications of Theorem \ref{thm:warped-dirac} (or its special case Theorem \ref{thm:dirac-op-intro}) and other computations, as outlined above.

\subsection*{Acknowledgments} The authors thank Tristan Ozuch for suggesting the project, and for continuously giving invaluable insights and advice. M.L. was supported in part by a Croucher Scholarship. I.M.L. and D.S. were supported in part by the MIT Department of Mathematics through its Summer Program in Undergraduate Research (SPUR).

\section{Positive mass theorems} \label{sec:PMTs}

We begin in \S\ref{subsec:asymp-Euc} by stating our conventions for AE manifolds and ADM mass. We then prove our positive mass theorems, Theorem \ref{thm:main-consolidated} and Theorem \ref{thm:main-SMMS-PMT}, in \S\ref{subsec:weightedPMT} and \S\ref{subsec:PMT-for-SMMS} respectively.
In \S\ref{subsec:mass-of-SMMS}, we motivate the weighted mass $\mathfrak{m}_f(g)$ as a reasonable notion of mass for SMMSs.

\subsection{Asymptotically Euclidean manifolds and mass} \label{subsec:asymp-Euc}

We adhere to the following conventions in this paper. A detailed exposition to the concepts below may be found in \cite{lee2019geom}.
\begin{definition} \label{defn:AE}
    A complete Riemannian manifold $(M^n,g)$ of dimension $n \geq 3$ is said to be \emph{asymptotically Euclidean} (AE) of order $\tau > \frac{n-2}{2}$ if
    \begin{enumerate}[label=(\alph*)]
        \item There is a decomposition $M = M_{\mathrm{cpct}} \cup M_\infty$, where $M_{\mathrm{cpct}}$ is compact and $M_\infty$ is diffeomorphic to the complement of a closed ball in $\R^n$.
        \item In the induced asymptotic coordinates \emph{``$x$''} for $M_\infty$, we have for $\rho=|x|$ the falloff conditions
        \begin{equation} \label{g-falloff}
            g_{ij} = \delta_{ij} + \O(\rho^{-\tau}), \quad \partial_k g_{ij} = \O(\rho^{-\tau-1}), \quad \partial_k \partial_l g_{ij} = \O(\rho^{-\tau-2}).
        \end{equation}
        \item The scalar curvature $R = R_g$ belongs to $L^1(M,g)$.
    \end{enumerate}
\end{definition}
While this definition excludes the possibility of having multiple ends, the results of this paper extend to that case upon applying now-standard modifications.

\begin{definition}[\cite{arnowitt1960canonical,arnowitt1960energy,arnowitt1961coordinate}]
    The \emph{(ADM) mass} of an AE manifold $(M,g)$ is
    \begin{equation} \label{eq:mass}
        \mathfrak{m}(g):= \lim_{\rho \rightarrow \infty}\int_{S_\rho} \sum_{i,j=1}^n (\del_i g_{ij} - \del_j g_{ii})\mathbf{n}_j \, dA,
    \end{equation}
    where $S_\rho$ is the coordinate sphere of radius $\rho$ in asymptotic coordinates $M_\infty \cong \R^n \setminus \overline{B_R(0)}$, $\mathbf{n}_j$ is the $j$-th component of the outward Euclidean unit normal to $S_\rho$, and $dA$ is the Euclidean area element.
\end{definition}
According to \cite{bartnik1986mass} (see also \cite[\S 9]{lee1987yamabe}), the integral \eqref{eq:mass} is finite on an AE manifold and does not depend on the choice of asymptotic coordinates; thus the mass is an invariant of $g$. The mass is defined with different normalizing constants in various references; we have chosen to follow the convention of \cite{baldauf2022spinors}.

\begin{definition} \label{defn:weightedHolder}
    Let $(M^n,g)$ be an AE manifold with asymptotic coordinates \emph{``$x$''} on $M_\infty$, as in Definition \ref{defn:AE}. For $0 < \alpha < 1$, $k \in \N_0$ and $\beta \in \R$, the \emph{weighted H\"older space} $\C^{k,\alpha}_{\beta}$ is the space of $\C^k$ functions $u: M \to \R$ for which the norm
    \begin{equation}
        \lVert u \rVert_{\C^{k,\alpha}_\beta(M)} := \sum_{0 \leq i \leq k} \left( \sup_{x \in M_\infty} \frac{|\nabla^i u(x)|}{|x|^{\beta-i}} \right) +
        \sup_{x \in M_\infty} \frac{[\nabla^k u]_{\mathcal{C}^\alpha(B_{|x|/2}(x))}}{|x|^{\beta-(k+\alpha)}}
    \end{equation}
    is finite, where $B_{|x|/2}(x)$ is the metric ball of radius $\frac{|x|}{2}$ centered at $x$ and
    \begin{align}
        [\nabla^k u]_{\mathcal{C}^\alpha(B_{|x|/2}(x))} &:= \sup_{y,z \in B_{\frac{|x|}{2}}(x)} \frac{|\nabla^k u(y) - \nabla^k u(z)|}{|y-z|^\alpha}.
    \end{align}
    If $E$ is a smooth vector bundle over $M$ equipped with a bundle metric and connection, then the spaces of sections $\C^{k,\alpha}_{\beta}(E)$ are defined analogously.
\end{definition}

Note that if $u \in \C^{k,\alpha}_\beta(M)$, then $u = \O(|x|^\beta)$ as $|x| \to \infty$.

\subsection{A weighted positive mass theorem} \label{subsec:weightedPMT}

For a weighted manifold $(M^n,g,f)$, define the metric
\begin{equation} \label{eq:conformal-metric}
    \tilde{g} = e^{-\frac{2f}{n-1}}g.
\end{equation}
To prove Theorem \ref{thm:main-consolidated}, we need two lemmas which will enable a reduction to the unweighted positive mass theorem.
Henceforth, denote by $R$ and $\tilde{R}$ the scalar curvatures of $g$ and $\tilde{g}$ respectively, and $R_f = R +2\Delta f - |\nabla f|^2$ the weighted scalar curvature of $(M,g,f)$, where covariant derivatives are taken with respect to $g$.
\begin{lemma} \label{lem:R-of-conformal}
    Let $(M^n,g,f)$ be an AE weighted manifold of order $\tau > \frac{n-2}{2}$, such that $f \in \C^{2,\alpha}_{-\tau}(M)$ and $R_f \in L^1(M,g)$. Then $(M^n,\tilde{g})$ is an AE manifold of order $\tau$ and
    \begin{equation} \label{eq:R-of-conformal}
        \tilde{R} = e^{\frac{2}{n-1}f}\left( R_f + \frac{1}{n-1}|\nabla f|^2 \right).
    \end{equation}
\end{lemma}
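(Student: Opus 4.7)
The plan is to derive the formula \eqref{eq:R-of-conformal} as a direct application of the standard conformal transformation law for scalar curvature, and then verify the AE conditions term by term using the decay assumption $f \in \C^{2,\alpha}_{-\tau}(M)$.

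First I would write $\tilde g = e^{2\varphi}g$ with $\varphi = -\frac{f}{n-1}$ and apply the well-known identity
\begin{equation}
    \tilde R = e^{-2\varphi}\bigl(R - 2(n-1)\Delta\varphi - (n-1)(n-2)|\nabla\varphi|^2\bigr).
\end{equation}
Substituting $\Delta\varphi = -\Delta f/(n-1)$ and $|\nabla\varphi|^2 = |\nabla f|^2/(n-1)^2$ yields
\begin{equation}
    \tilde R = e^{\frac{2f}{n-1}}\Bigl(R + 2\Delta f - \tfrac{n-2}{n-1}|\nabla f|^2\Bigr),
\end{equation}
and using $R+2\Delta f = R_f + |\nabla f|^2$ collapses the $|\nabla f|^2$ coefficient to $\frac{1}{n-1}$, giving \eqref{eq:R-of-conformal}.

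For the AE assertion I would work in the asymptotic coordinates $x$ of $(M,g)$ and note that the same coordinates are AE coordinates for $\tilde g$. The hypothesis $f \in \C^{2,\alpha}_{-\tau}$ gives $f = \O(\rho^{-\tau})$, $\partial_k f = \O(\rho^{-\tau-1})$, $\partial_k\partial_l f = \O(\rho^{-\tau-2})$; hence the Taylor expansion $e^{-\frac{2f}{n-1}} = 1 + \O(\rho^{-\tau})$ together with the product rule and the falloff \eqref{g-falloff} of $g_{ij}$ yields $\tilde g_{ij} = \delta_{ij} + \O(\rho^{-\tau})$, $\partial_k\tilde g_{ij} = \O(\rho^{-\tau-1})$, $\partial_k\partial_l\tilde g_{ij} = \O(\rho^{-\tau-2})$, which is condition (b) of Definition \ref{defn:AE} for $\tilde g$ at order $\tau$.

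It remains to check $\tilde R \in L^1(M,\tilde g)$. Since $dV_{\tilde g} = e^{-\frac{nf}{n-1}}dV_g$, formula \eqref{eq:R-of-conformal} gives
\begin{equation}
    |\tilde R|\,dV_{\tilde g} = e^{-\frac{(n-2)f}{n-1}}\Bigl|R_f + \tfrac{1}{n-1}|\nabla f|^2\Bigr|\,dV_g.
\end{equation}
The prefactor is bounded because $f$ is continuous on the compact core and tends to $0$ at infinity; $R_f$ is integrable by hypothesis; and $|\nabla f|^2 = \O(\rho^{-2\tau-2})$ is integrable precisely because $\tau > \frac{n-2}{2}$ implies $2\tau+2 > n$. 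The only mildly delicate point, and the one I would be most careful with, is this integrability bookkeeping — matching the decay rate $\tau > (n-2)/2$ to the Euclidean integrability threshold and to the fact that $\tilde g$ and $g$ are uniformly equivalent at infinity so that $L^1(M,g)$ and $L^1(M,\tilde g)$ agree for the relevant quantities.
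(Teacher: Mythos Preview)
Your proposal is correct and follows essentially the same approach as the paper: verify the metric falloff for $\tilde g$ from the decay of $f$, compute $\tilde R$ via the standard conformal transformation law, and check $\tilde R \in L^1$ using $R_f \in L^1$ together with $|\nabla f|^2 = \O(\rho^{-2\tau-2})$. The only cosmetic difference is that the paper writes the conformal factor in Yamabe form $\tilde g = \varphi^{4/(n-2)}g$ with $\varphi = e^{-\frac{n-2}{2(n-1)}f}$ and applies the formula $\tilde R = \varphi^{-\frac{n+2}{n-2}}\bigl(-\tfrac{4(n-1)}{n-2}\Delta\varphi + R\varphi\bigr)$, whereas you use the equivalent $e^{2\varphi}$ form; the computations are the same in substance.
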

\begin{proof}
    Since $f \in \C^{2,\alpha}_{-\tau}(M)$, in asymptotic coordinates for the end $M_\infty$ of $M$ we have 
    \begin{equation} \label{eq:f-decay}
        f = \O(\rho^{-\tau}), \quad \del_k f = \O(\rho^{-\tau-1}), \quad \del_k \del_l f = \O(\rho^{-\tau-2}).
    \end{equation}
    In particular,
    \begin{equation} \label{eq:e^f-decay}
        e^{-\frac{2f}{n-1}} = 1 + \O(\rho^{-\tau})
    \end{equation}
    and so
    \begin{equation}
        \tilde{g}_{ij} = e^{-\frac{2f}{n-1}}g_{ij} = (1 + \O(\rho^{-\tau}))(\delta_{ij}+\O(\rho^{-\tau})) = \delta_{ij} + \O(\rho^{-\tau}).
    \end{equation}
    Using \eqref{eq:f-decay}, \eqref{eq:e^f-decay}, and the asymptotic Euclideanness of $g$, it also follows that
    \begin{align}
        \del_k \tilde{g}_{ij} &= -\frac{2}{n-1}e^{-\frac{2f}{n-1}} (\del_k f) g_{ij} + e^{-\frac{2f}{n-1}} \del_k g_{ij} = \O(\rho^{-\tau-1}).
    \end{align}
    Similarly, $\del_k \del_l \tilde{g}_{ij} = \O(\rho^{-\tau-2})$. These are the required decay conditions on $\tilde{g}$.

    To prove \eqref{eq:R-of-conformal}, let $\varphi = e^{-\frac{n-2}{2(n-1)}f}$. We have $\tilde{g} = \varphi^{\frac{4}{n-2}}g$ and
    \begin{equation} \label{eq:Laplace-phi}
        \Delta_g \varphi = e^{-\frac{n-2}{2(n-1)}f}\left( -\frac{n-2}{2(n-1)} \Delta_g f + \left( \frac{n-2}{2(n-1)} \right)^2 |\nabla f|^2 \right).
    \end{equation}
    Recall that if $g$ is a Riemannian metric on an $n$-dimensional manifold and $\varphi$ is a smooth positive function, then the conformal metric $\varphi^{\frac{4}{n-2}}g$ has scalar curvature
    \begin{equation} \label{eq:R-of-conformal1}
        \varphi^{-\frac{n+2}{n-2}} \left( -\frac{4(n-1)}{n-2}\Delta_g \varphi + R \varphi \right).
    \end{equation}
    Using this together with \eqref{eq:Laplace-phi}, it follows that
    \begin{align}
        \tilde{R} &= e^{\frac{n+2}{2(n-1)}f} \left( -\frac{4(n-1)}{n-2} e^{-\frac{n-2}{2(n-1)}f}\left( -\frac{n-2}{2(n-1)} \Delta_g f + \left( \frac{n-2}{2(n-1)} \right)^2 |\nabla f|^2 \right) + R e^{-\frac{n-2}{2(n-1)}f} \right) \\
        &= e^{\frac{2f}{n-1}}\left( 2\Delta_g f - \frac{n-2}{n-1}|\nabla f|^2 + R \right) \\
        &= e^{\frac{2f}{n-1}} \left(R_f + \frac{1}{n-1}|\nabla f|^2 \right). \label{eq:Rtilde-formula}
    \end{align}
    Since $R_f \in L^1(M,g)$ by hypothesis, $|\nabla f|^2 = \O(\rho^{-2\tau-2})$ by \eqref{eq:f-decay}, and we have the decay \eqref{eq:e^f-decay}, the formula \eqref{eq:Rtilde-formula} implies that $\tilde{R} \in L^1(M,g)$. As $\tilde{g}$ and $g$ are asymptotically equivalent this implies $\tilde{R} \in L^1(M,\tilde{g})$. So $\tilde{g}$ is an asymptotically Euclidean metric.
\end{proof}

The conformal choice \eqref{eq:conformal-metric} yields a nice formula for the weighted mass $\mathfrak{m}_f(g)$ defined in \eqref{eq:weighted-mass}:
\begin{lemma} \label{lem:equality-of-masses}
    Let $(M^n,g,f)$ be an AE weighted manifold of order $\tau > \frac{n-2}{2}$. Assume $f \in \C^{2,\alpha}_{-\tau}(M)$ and $R_f \in L^1(M,g)$. Then the weighted mass of $(M,g,f)$ equals the unweighted mass of $(M,\tilde{g})$:
    \begin{equation}
        \mathfrak{m}_f(g)=\mathfrak{m}(\tilde{g}).
    \end{equation}
\end{lemma}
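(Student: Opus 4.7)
The plan is to compute $\mathfrak{m}(\tilde{g})$ directly in the asymptotic coordinates inherited from $(M,g)$, using the fact that the previous lemma guarantees these coordinates make $\tilde{g}$ into an AE metric of order $\tau$ (so $\mathfrak{m}(\tilde{g})$ is well-defined and may be computed in any asymptotic chart).

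First I would plug $\tilde{g}_{ij} = e^{-\frac{2f}{n-1}} g_{ij}$ into the ADM integrand and apply the product rule:
\begin{equation}
    \partial_i \tilde{g}_{ij} - \partial_j \tilde{g}_{ii}
    = e^{-\frac{2f}{n-1}}\left[(\partial_i g_{ij}-\partial_j g_{ii}) - \tfrac{2}{n-1}\bigl((\partial_i f)g_{ij} - (\partial_j f)g_{ii}\bigr)\right],
\end{equation}
with Einstein summation on $i$. Substituting $g_{ij} = \delta_{ij} + O(\rho^{-\tau})$ and summing, the bracketed conformal correction becomes
\begin{equation}
    -\tfrac{2}{n-1}\bigl(\partial_j f - n\,\partial_j f\bigr) + O(\rho^{-2\tau-1}) = 2\,\partial_j f + O(\rho^{-2\tau-1}).
\end{equation}
The prefactor $e^{-\frac{2f}{n-1}}$ contributes a further $O(\rho^{-\tau})$ multiplicative error against a $O(\rho^{-\tau-1})$ quantity, hence only $O(\rho^{-2\tau-1})$. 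Thus
\begin{equation}
    \sum_{i,j}(\partial_i \tilde{g}_{ij}-\partial_j \tilde{g}_{ii})\mathbf{n}_j
    = \sum_{i,j}(\partial_i g_{ij}-\partial_j g_{ii})\mathbf{n}_j + 2\langle \nabla f,\mathbf{n}\rangle + O(\rho^{-2\tau-1}).
\end{equation}

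Next I would verify that the error integrates to zero. Since the area of $S_\rho$ grows like $\rho^{n-1}$ and $2\tau+1 > n-1$ (using $\tau > \frac{n-2}{2}$), the $O(\rho^{-2\tau-1})$ term contributes $O(\rho^{n-2-2\tau}) \to 0$. The same decay estimate lets me replace $e^{-f}$ by $1$ in Baldauf--Ozuch's definition \eqref{eq:weighted-mass}, because $e^{-f} - 1 = O(\rho^{-\tau})$ and $\langle \nabla f,\mathbf{n}\rangle = O(\rho^{-\tau-1})$, giving again an integrand of order $\rho^{-2\tau-1}$. Taking $\rho \to \infty$ yields
\begin{equation}
    \mathfrak{m}(\tilde g) = \mathfrak{m}(g) + 2\lim_{\rho\to\infty}\int_{S_\rho}\langle \nabla f,\mathbf{n}\rangle\,dA = \mathfrak{m}(g) + 2\lim_{\rho\to\infty}\int_{S_\rho}\langle \nabla f,\mathbf{n}\rangle e^{-f}dA = \mathfrak{m}_f(g).
\end{equation}

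The proof is essentially a careful expansion; there is no substantial obstacle, only bookkeeping. The only place where the hypotheses get used nontrivially is the decay threshold $\tau > \frac{n-2}{2}$, which is exactly what makes the quadratic-in-$f$ error terms integrate to zero on $S_\rho$. Lemma \ref{lem:R-of-conformal} is invoked only to ensure that $\mathfrak{m}(\tilde g)$ makes sense (i.e. that $\tilde g$ is AE in the same chart), so no independent assumption beyond what is already in the hypotheses is needed.
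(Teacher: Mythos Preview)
Your proof is correct and follows essentially the same approach as the paper: both expand $\partial_i \tilde g_{ij} - \partial_j \tilde g_{ii}$ via the product rule, split off the $\mathfrak{m}(g)$ piece, reduce the conformal correction to $2\langle\nabla f,\mathbf{n}\rangle$ using $g_{ij}=\delta_{ij}+O(\rho^{-\tau})$, and discard the $O(\rho^{-2\tau-1})$ errors using $\tau>\frac{n-2}{2}$. The only cosmetic difference is that you explicitly replace $e^{-f}$ by $1$ in the boundary integral, whereas the paper carries the $e^{-f}$ factor through; these agree since $e^{-f}-1=O(\rho^{-\tau})$.
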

\begin{proof}
    By Lemma \ref{lem:R-of-conformal}, $(M,\tilde{g})$ is AE of order $\tau$ so $\mathfrak{m}(\tilde{g})$ is well-defined.
    Since $\tilde{g} = e^{-\frac{2f}{n-1}}g$, we have
    \begin{align}
        \mathfrak{m}(\tilde{g}) &= \lim_{\rho \rightarrow \infty}\int_{S_\rho} \sum_{i,j=1}^n (\del_i \tilde{g}_{ij} - \del_j \tilde{g}_{ii})\mathbf{n}_j \, dA \\
        &= \underbrace{\lim_{\rho \to \infty} \int_{S_\rho} e^{-\frac{2f}{n-1}} \sum_{i,j=1}^n (\del_i g_{ij} - \del_j g_{ii}) \mathbf{n}_j \, dA}_{=:I_1} - \underbrace{\frac{2}{n-1}\lim_{\rho \to \infty} \int_{S_\rho} e^{-\frac{2f}{n-1}}\sum_{i,j=1}^n ((\del_i f)g_{ij} - (\del_j f) g_{ii}) \mathbf{n}_j \, dA}_{=:I_2}. \label{eq:I_1-I_2}
    \end{align}
    Since $e^{-\frac{2f}{n-1}} = 1 + \O(\rho^{-\tau})$ and $\del_i g_{ij}, \del_j g_{ii} = \O(\rho^{-\tau-1})$, we have
    \begin{align} \label{eq:I_1}
        I_1 &= \lim_{\rho\to\infty} \int_{S_\rho} \sum_{i,j=1}^n (\del_i g_{ij} - \del_j g_{ii}) \mathbf{n}_j \, dA + \lim_{\rho\to\infty} \int_{S_\rho} \O(\rho^{-2\tau-1}) \, dA = \mathfrak{m}(g).
    \end{align}
    To handle $I_2$, use that $g_{ij} = \delta_{ij} + \O(\rho^{-\tau})$ to get
    \begin{align}
        I_2 &= \frac{2}{n-1} \lim_{\rho\to\infty} \int_{S_\rho} (1+\O(\rho^{-\tau})) \sum_{i,j=1}^n [(\del_i f)(\delta_{ij} + \O(\rho^{-\tau})) - (\del_j f)(1 + \O(\rho^{-\tau}))] \mathbf{n}_j \, dA  \\
        &= \frac{2}{n-1} \lim_{\rho\to\infty} \int_{S_\rho} (1-n)\sum_{j=1}^n (\del_j f) \mathbf{n}_j + \O(\rho^{-2\tau-1}) \, dA  \\
        &= -2\lim_{\rho\to \infty} \int_{S_\rho} \langle \nabla f, \mathbf{n} \rangle e^{-f} \, dA. \label{eq:I_2}
    \end{align}
    Using \eqref{eq:I_1} and \eqref{eq:I_2} in \eqref{eq:I_1-I_2}, we have
    \begin{align}
        \mathfrak{m}(\tilde{g}) &= I_1 - I_2 = \mathfrak{m}(g) + 2\lim_{\rho\to\infty} \int_{S_\rho} \langle \nabla f, \mathbf{n} \rangle e^{-f} \, dA = \mathfrak{m}_f(g).
    \end{align}
\end{proof}

We will now prove Theorem \ref{thm:main-consolidated} using the previous two lemmas.

\begin{proof}[Proof of Theorem \ref{thm:main-consolidated}]
    Let $(M^n,g,f)$ be a weighted manifold which is AE of order $\tau > \frac{n-2}{2}$, and suppose $3 \leq n \leq 7$ or $M$ is spin. Also assume $f \in \C^{2,\alpha}_{-\tau}(M)$ and $R_f \in L^1(M,g)$.

    If $R_f \geq -\frac{1}{n-1}|\nabla f|^2$, then Lemma \ref{lem:R-of-conformal} shows that the conformal metric $\tilde{g} = e^{-\frac{2f}{n-1}}g$ has scalar curvature $\tilde{R} \geq 0$. By Lemma \ref{lem:equality-of-masses} and Theorem \ref{thm:PMT},
    \begin{equation} \label{eq:PMT-proof-1}
        \mathfrak{m}_f(g) = \mathfrak{m}(\tilde{g}) \geq 0.
    \end{equation}
    This proves the first claim in part (a) of Theorem \ref{thm:main-consolidated}, as well as part (c). 
    
    Now suppose $R_f \geq 0$ and $\mathfrak{m}_f(g) = 0$. Lemmas \ref{lem:R-of-conformal} and \ref{lem:equality-of-masses} then imply $\tilde{R} \geq 0$ and $\mathfrak{m}(\tilde{g}) = 0$, so the rigidity part of Theorem \ref{thm:PMT} gives that $(M^n,\tilde{g})$ is isometric to $(\R^n,\delta_{ij})$. Thus
    \begin{equation}
        0 = \tilde{R} = e^{\frac{2}{n-1}f}\left(R_f + \frac{1}{n-1}|\nabla f|^2 \right) \geq \frac{1}{n-1}e^{\frac{2}{n-1}f}|\nabla f|^2 \geq 0.
    \end{equation}
    It follows that $f$ is constant, but since $f \in \C^{2,\alpha}_{-\tau}(M)$, we have $f \equiv 0$. So $\tilde{g} = g$, and $(M^n,g)$ too is isometric to $(\R^n,\delta_{ij})$. This proves the rigidity part of Theorem \ref{thm:main-consolidated}(a).

    Theorem \ref{thm:main-consolidated}(a) implies Theorem \ref{thm:PMT} by taking the weight to be $f \equiv 0$. Conversely, we have just used Theorem \ref{thm:PMT} to prove Theorem \ref{thm:main-consolidated}(a). Hence Theorem \ref{thm:main-consolidated}(b) follows.
\end{proof}

\subsection{The mass of a smooth metric measure space} \label{subsec:mass-of-SMMS}

In Definition \ref{def:SMMS-AE-and-mass}, the mass of an AE SMMS $\mathcal{M} = (M^n,g,e^{-f}d\vol_g,m)$, $m \in \R$, was defined as the weighted mass of $(M^n,g,f)$. We will now motivate this by considering the $m \in \N$ case.

Let $\mathcal{M} = (M^n,g,e^{-f}d\vol_g,m)$ be a SMMS with $m \in \N$, let $(F^m,h)$ be an $m$-dimensional scalar-flat manifold, and form the warped product $$(M^n \times F^m,\overline{g} = g \oplus e^{-\frac{2f}{m}}h).$$ Then the warped product has scalar curvature $R^m_f$, and its Ricci tensor restricted to tangent vectors on $M$ is $\Ric^m_f$. These are the curvatures associated to $\mathcal{M}$, defined in \eqref{eq:m-weighted-ricci} and \eqref{eq:m-weighted-scalar}. We may therefore view the warped product as a natural `extrinsic' space associated to $\mathcal{M}$. For this reason, if we are to define a mass for AE SMMSs which makes use of the parameter $m$, it is natural to define it as the mass of the warped product.\footnote{Although the warped product is not AE, its mass can be defined using an integral similar to \eqref{eq:mass} as done in \eqref{eq:mass-SMMS}.} We normalize by taking $(F^m,h)$ to have unit volume.
\begin{definition} \label{def:SMMS-mass}
    Let $\mathcal{M} = (M^n,g,e^{-f}d\vol_g,m)$ be an AE SMMS with $m \in \N$. Let $(F^m,h)$ be an $m$-dimensional scalar-flat manifold of unit volume, and form the warped product $(M^n \times F^m,\overline{g} = g \oplus e^{-\frac{2f}{m}}h)$. Work in asymptotic coordinates for $(M^n,g)$ and an orthonormal frame for $(F^m,h)$. The \emph{mass} of $\mathcal{M}$ is defined as
    \begin{equation} \label{eq:mass-SMMS}
        \mathfrak{m}(\mathcal{M}) := \lim_{\rho\to\infty} \int_{S^M_\rho \times F} (\del_i \bar{g}_{ij} - \del_j \bar{g}_{aa}) \mathbf{n}_j \, dA \, d\vol_h,
    \end{equation}
    where $S^M_\rho$ is the coordinate sphere of radius $\rho$ in asymptotic coordinates for $M$, $\mathbf{n}_j$ is the $j$-th component of the outward Euclidean unit normal to $S^M_\rho$, and $dA$ is the Euclidean area element. Here the indices $i,j$ are summed over the coordinates for $M$ and the index $a$ is summed over the combined frame for $M \times F$.
\end{definition}

The next proposition reveals that \eqref{eq:mass-SMMS} is nothing but the weighted mass of $(M^n,g,f)$ (hence independent of both $m$ and $F$). This is why we have used the weighted mass as the definition of mass for SMMSs even when $m \notin \N$.

\begin{proposition} \label{prop:SMMS-mass-is-mfg}
    Let $\mathcal{M} = (M^n,g,e^{-f}d\vol_g,m)$ be an AE SMMS with $m \in \N$. Then
    \begin{equation}
        \mathfrak{m}(\mathcal{M}) = \mathfrak{m}_f(g),
    \end{equation}
    where $\mathfrak{m}_f(g)$ is the weighted mass of $(M^n,g,f)$ defined in \eqref{eq:weighted-mass}.
\end{proposition}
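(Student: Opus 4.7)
The plan is to unpack the warped-product definition \eqref{eq:mass-SMMS} in the obvious coordinate/frame, separate the resulting integrand into an $M$-part and an $f$-part, and then show that the residual $e^{-2f/m}$ can be replaced by $e^{-f}$ in the limit $\rho \to \infty$.

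First, fix asymptotic coordinates $x^1,\ldots,x^n$ on $M_\infty$ and a local orthonormal frame $e_{n+1},\ldots,e_{n+m}$ for $(F^m,h)$, so in the combined frame for $(M \times F, \bar{g})$ we have $\bar{g}_{ij} = g_{ij}$ for $i,j \leq n$, $\bar{g}_{ab} = e^{-\frac{2f}{m}}\delta_{ab}$ for $a,b > n$, and $\bar{g}_{ia} = 0$ for mixed indices. Since $f$ is pulled back from $M$, only derivatives in the $M$-directions act nontrivially. Computing with $i,j$ summed over $M$-coordinates and $a$ over all indices,
\begin{align}
    \partial_i \bar{g}_{ij} - \partial_j \bar{g}_{aa}
    &= \partial_i g_{ij} - \partial_j\!\left( \sum_{k=1}^n g_{kk} + m\, e^{-\frac{2f}{m}} \right) \\
    &= (\partial_i g_{ij} - \partial_j g_{kk}) + 2\, e^{-\frac{2f}{m}}\, \partial_j f.
\end{align}

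Second, plug this into \eqref{eq:mass-SMMS}. Since neither $g_{ij}$ nor $f$ depends on the $F$-variables and $(F^m,h)$ has unit volume, the $d\vol_h$ integration is trivial, giving
\begin{equation}
    \mathfrak{m}(\mathcal{M}) = \lim_{\rho \to \infty}\int_{S^M_\rho} (\partial_i g_{ij} - \partial_j g_{kk})\mathbf{n}_j\, dA + 2\lim_{\rho\to\infty} \int_{S^M_\rho} e^{-\frac{2f}{m}}\langle \nabla f, \mathbf{n}\rangle\, dA.
\end{equation}
The first limit is exactly $\mathfrak{m}(g)$, so it only remains to compare the second with the boundary term defining $\mathfrak{m}_f(g)$ in \eqref{eq:weighted-mass}.

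Third, the comparison step. Writing $e^{-\frac{2f}{m}} - e^{-f} = (1-\tfrac{2}{m})f + O(f^2)$ and recalling that $f \in \C^{2,\alpha}_{-\tau}(M)$ gives $f = \O(\rho^{-\tau})$ and $|\nabla f| = \O(\rho^{-\tau-1})$, the integrand of the difference is $\O(\rho^{-2\tau-1})$. Since the area of $S^M_\rho$ is $\O(\rho^{n-1})$ and $\tau > \tfrac{n-2}{2}$, the difference integral is $\O(\rho^{n-2-2\tau}) = o(1)$. Therefore
\begin{equation}
    \lim_{\rho\to\infty} \int_{S^M_\rho} e^{-\frac{2f}{m}}\langle \nabla f, \mathbf{n}\rangle\, dA = \lim_{\rho\to\infty} \int_{S^M_\rho} e^{-f}\langle \nabla f, \mathbf{n}\rangle\, dA,
\end{equation}
and combining with the previous display yields $\mathfrak{m}(\mathcal{M}) = \mathfrak{m}(g) + 2\lim_{\rho\to\infty} \int_{S^M_\rho} \langle \nabla f, \mathbf{n}\rangle e^{-f}\, dA = \mathfrak{m}_f(g)$.

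The only mildly delicate point is the third step: a priori the individual limit $\lim_\rho \int_{S^M_\rho} \langle \nabla f, \mathbf{n}\rangle\, dA$ need not exist under the stated decay, so one cannot naively replace $e^{-\frac{2f}{m}}$ by $1$ and add the correction separately. The fact that the difference of the two weighted integrals (not each individually) is $o(1)$ is what makes the argument go through, and this uses the hypothesis $\tau > \frac{n-2}{2}$ in an essential way. Notably, $m$ drops out entirely in this comparison, confirming that the warped-product mass is independent of both $m$ and the choice of $(F^m,h)$, and matches $\mathfrak{m}_f(g)$.
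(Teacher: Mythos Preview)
Your proof is correct and follows essentially the same approach as the paper: unpack \eqref{eq:mass-SMMS} in a split frame, separate the $M$-piece (giving $\mathfrak{m}(g)$) from the $f$-piece, and use the unit volume of $F$ to reduce to an integral over $S^M_\rho$. In fact you are more explicit than the paper on the one nontrivial point: the paper simply passes from $e^{-\frac{2f}{m}}$ to $e^{-f}$ in the boundary term without comment, whereas you correctly justify this by bounding the difference by $\O(\rho^{-2\tau-1})$ and invoking $\tau>\tfrac{n-2}{2}$, and you rightly observe that it is the \emph{difference} of the two weighted boundary integrals, not each one separately, whose vanishing is needed.
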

\begin{proof}
    Fix asymptotic coordinates for $M$, and take an orthonormal frame for $F$ so that $h$ is the $m \times m$ identity matrix. Putting these together yields a frame for the warped product $(M^n \times F^m,\overline{g} = g \oplus e^{-\frac{2f}{m}}h)$. According to Definition \ref{def:SMMS-mass}, the mass of $\mathcal{M}$ is
    \begin{align}
        \mathfrak{m}(\mathcal{M}) &=
        \lim_{\rho\to\infty} \int_{S^M_\rho \times F} (\del_i \bar{g}_{ij} - \del_j \bar{g}_{ii} - \del_j \bar{g}_{\beta\beta}) \mathbf{n}_j \, dA \, d\vol_h \label{eq:mass-computation-1}
    \end{align}
    where $i,j$ run over the coordinates for $M$ and $\beta$ runs over the coordinates for $F$. Now we have
    \begin{equation}
        \bar{g}_{ij} = g_{ij}, \quad \bar{g}_{\beta\beta} = e^{-\frac{2f}{m}} h_{\beta\beta} = e^{-\frac{2f}{m}}, \quad \del_j \bar{g}_{\beta\beta} = -\frac{2}{m}e^{-\frac{2f}{m}}\del_j f.
    \end{equation}
    Substituting these into \eqref{eq:mass-computation-1} and using the fact that $(F^m,h)$ has unit volume, we have
    \begin{align}
        \mathfrak{m}(\mathcal{M}) &= \lim_{\rho\to\infty} \int_{S^M_\rho \times F} (\del_i g_{ij} - \del_j g_{ii}) \mathbf{n}_j \, dA \, d\vol_h + \lim_{\rho\to\infty} \int_{S^M_\rho \times F} 2e^{-\frac{2f}{m}}(\del_j f) \mathbf{n}_j \, dA \, d\vol_h  \\
        &= \left(\mathfrak{m}(g) + 2 \lim_{\rho\to\infty} \int_{S^M_\rho} \langle \nabla f, \mathbf{n} \rangle e^{-f} \, dA\right) \vol(F^m,h) \\
        &= \mathfrak{m}_f(g).
    \end{align}
\end{proof}

\subsection{A positive mass theorem for smooth metric measure spaces} \label{subsec:PMT-for-SMMS}

\begin{proof}[Proof of Theorem \ref{thm:main-SMMS-PMT}]
    Let $\mathcal{M} =(M^n,g,e^{-f}d\vol_g,m)$ be an AE SMMS of order $\tau > \frac{n-2}{2}$ with $m \in \R \setminus(1-n,0]$, and suppose $3 \leq n \leq 7$ or $M$ is spin. Also assume that $$f \in \C^{2,\alpha}_{-\tau}(M), \quad R_f \in L^1(M,g), \quad R^m_f \geq 0.$$
    Since $m \notin (1-n,0]$, we have
    \begin{equation} \label{eq:Rmf-inequalit}
        0 \leq R^m_f = R_f - \frac{1}{m}|\nabla f|^2 \leq R_f + \frac{1}{n-1}|\nabla f|^2.
    \end{equation}
    It follows by Theorem \ref{thm:main-consolidated}(c) that $\mathfrak{m}(\mathcal{M}) = \mathfrak{m}_f(g) \geq 0$. If equality holds, then using the conformal metric $\tilde{g} = e^{-\frac{2f}{n-1}}g$ and Lemma \ref{lem:equality-of-masses}, we have
    \begin{equation} \label{eq:masses}
        \mathfrak{m}(\mathcal{M}) = \mathfrak{m}_f(g) = \mathfrak{m}(\tilde{g}) = 0.
    \end{equation}
    By Lemma \ref{lem:R-of-conformal}, the scalar curvature of $\tilde{g}$ is
    \begin{align} \label{eq:Rtilde-SMMS}
        \tilde{R} &= e^{\frac{2}{n-1}f}\left( R_f + \frac{1}{n-1} |\nabla f|^2 \right) = e^{\frac{2}{n-1}f}\left(R^m_f + \left(\frac{1}{m} + \frac{1}{n-1}\right)|\nabla f|^2 \right).
    \end{align}
    Combining the first equality with \eqref{eq:Rmf-inequalit} yields $\tilde{R} \geq 0$, so by \eqref{eq:masses} and the rigidity in the unweighted positive mass theorem, we have $(M,\tilde{g}) \cong (\R^n,\delta_{ij})$. Now \eqref{eq:Rtilde-SMMS} gives
    \begin{equation}
        0 = \tilde{R} = e^{\frac{2}{n-1}f}\left(R^m_f + \left(\frac{1}{m} + \frac{1}{n-1}\right)|\nabla f|^2 \right) \geq e^{\frac{2}{n-1}f}\left(\frac{1}{m}+\frac{1}{n-1}\right)|\nabla f|^2 \geq 0,
    \end{equation}
    which implies $\nabla f = 0$. But $f \in \C^{2,\alpha}_{-\tau}(M)$, so $f \equiv 0$. Thus $\tilde{g} = g$, so $(M^n,g) \cong (\R^n,\delta_{ij})$.
\end{proof}

\section{Spin geometry on warped products} \label{sec:spin}

We now turn to spinorial aspects of SMMSs. We will restrict to SMMSs $\mathcal{M} = (M^n,g,e^{-f}d\vol_g,m)$ with $m \in \N$, in which case $\mathcal{M}$ is closely associated to a warped product manifold
\begin{equation}
    (M^n \times F^m, \bar{g} = g \oplus e^{-\frac{2f}{m}}h)
\end{equation}
where $(F^m,h)$ is scalar-flat (see \S\ref{sec:intro} or \S\ref{subsec:mass-of-SMMS}). In this section, $M$ and $F$ are assumed to be spin manifolds.

We study the spin geometry of $\mathcal{M}$ via the spin geometry of the warped product $(M \times F, \bar{g})$. Doing this entails relating spinors on the warped product to spinors on the factors $(M,g)$ and $(F,h)$. Facilitated by \cite[\S 3.2]{roos2020dirac}, we carry this out in \S\ref{subsec:warped-spin-bundle} by identifying the appropriate spinor bundles, and in \S\ref{subsec:warped-Dirac} by relating their Dirac operators (Theorem \ref{thm:warped-dirac}). We begin by reviewing the necessary spin geometry; for comprehensive treatments see \cite{bourguignon2015spinorial,friedrich2000dirac,lawson2016spin}.

\subsection{Generalities on spin geometry} \label{subsec:generalities-spin}

Let $E \to X$ be a oriented vector bundle of rank $k$ equipped with a metric $h$, and let $P_{\SO}E \to X$ be the bundle of positive orthonormal bases of $(E,h)$. Recall that a \emph{spin structure} on $E$ is a principal $\Spin_k$-bundle $P_{\Spin}E \to X$ with an equivariant double cover $P_{\Spin}E \to P_{\SO}E$ with respect to the right group actions and the universal covering $\Spin_k \to \SO_k$. If $P_{\Spin}E$ is a spin structure for $(E,h)$, we can form the spinor bundle $\Sigma E = P_{\Spin}E \times_\rho \mathbb{C}^{2^{\lfloor k/2 \rfloor}}$ over $X$, where $\rho$ is the restriction to $\Spin_k$ of an irreducible representation of the complexified Clifford algebra $\Cl(\R^k)$. Then $\Sigma E$ has complex rank $2^{\lfloor \frac{k}{2}\rfloor}$, and is a bundle of modules over the bundle of Clifford algebras $\Cl(E,h)$.

Any $\Cl(E,h)$-module $S$, such as $\Sigma E$, has a Hermitian metric such that the action of unit vectors is unitary. This Hermitian metric on $S$ is obtained by an averaging procedure which we now spell out. Take an arbitrary Hermitian metric $(\cdot,\cdot)$, and for each $x \in X$ let $\Gamma_x$ be the finite subgroup of $\Cl(E_x,h_x)$ generated by an orthonormal basis for $E_x$. Now define the Hermitian metric $\langle\cdot,\cdot\rangle$ on $S$ by setting for all $\psi_1,\psi_2 \in S_x$
\begin{equation} \label{eq:averaging}
    \langle\psi_1,\psi_2\rangle = \frac{1}{|\Gamma_x|} \sum_{\tau \in \Gamma_x} (\tau \cdot \psi_1, \tau \cdot \psi_2).
\end{equation}
This averaged metric is the Hermitian metric on $S$, and is unique up to positive scaling.
Note that if we repeat the above using $\langle\cdot,\cdot\rangle$ as the starting metric, then the averaging procedure recovers $\langle\cdot,\cdot\rangle$.

A Riemannian manifold is \emph{spin} if it admits a spin structure, meaning a spin structure on its tangent bundle. Given a spin structure $P_{\Spin}(TX)$ on a spin manifold $(X^n,g)$, the spinor bundle $\Sigma X := \Sigma TX$ is a rank $2^{\lfloor \frac{n}{2} \rfloor}$ bundle of complex modules over $\Cl(TX,g)$. Moreover, $\Sigma X$ gets a Hermitian metric using the averaging procedure described above (so that Clifford multiplication by unit tangent vectors is unitary), as well as a connection defined by
\begin{equation} \label{eq:spin-connection}
    \nabla_Y \psi = d\psi(Y) + \frac{1}{4}\sum_{j,k=1}^n g(\nabla_Y e_j,e_k) e_j \cdot e_k \cdot \psi, \quad Y \in TM, \psi \in \Gamma(\Sigma X),
\end{equation}
where $e_1,\ldots,e_n$ is an orthonormal basis for $(TX,g)$. The \emph{Dirac operator} $D: \Gamma(\Sigma X) \to \Gamma(\Sigma X)$ is a symmetric first-order elliptic operator defined by
\begin{equation}
    D\psi = \sum_{i=1}^n e_i \cdot \nabla_{e_i} \psi.
\end{equation}
If $R$ denotes the scalar curvature of $(X,g)$, then $D$ satisfies the \emph{Lichnerowicz formula}
\begin{equation} \label{eq:Lich}
    D^2 \psi = -\Delta\psi + \frac{R}{4}\psi,
\end{equation}
where $\Delta = -\nabla^*\nabla = \nabla_{e_i}\nabla_{e_i} - \nabla_{\nabla_{e_i}e_i}$ is the Laplacian on the spinor bundle.

Define a section $\omega_X$ of $\Cl(TX,g)$ as follows: if $e_1,\ldots,e_n$ is a positive orthonormal basis for $(T_x X,g)$, then
\begin{equation}
    \omega_X(x) = i^{\lfloor \frac{n+1}{2} \rfloor} e_1 \cdots e_n.
\end{equation}
If $n = \dim X$ is even, then $\omega_X^2 = 1$ so $\Sigma X$ splits as an orthogonal sum of the $\pm 1$-eigenbundles:
\begin{equation}
    \Sigma X = \hat{\Sigma}^+ X \oplus \hat{\Sigma}^- X, \quad \hat{\Sigma}^\pm X = \{ \psi \in \Sigma X \mid \omega_X \cdot \psi = \pm \psi \}.
\end{equation}
We write $\psi = \psi^+ + \psi^-$ for the corresponding decomposition of $\psi \in \Sigma X$. The conjugate spinor is defined by $\bar{\psi} = \psi^+ - \psi^-$. We have $v \cdot \omega_X = -\omega_X \cdot v$ for all $v \in TX$, so Clifford multiplication by $v$ permutes $\hat{\Sigma}^\pm X$.

\subsection{The spinor bundle of a warped product} \label{subsec:warped-spin-bundle}

Let $(M^n,g)$ and $(F^m,h)$ be spin manifolds, fix a smooth function $f \in \C^\infty(M)$, and consider the warped product
\begin{equation} \label{eq:warped-1}
    (M \times F, \bar{g} = g \oplus e^{-\frac{2f}{m}}h)
\end{equation}
along with the projection maps
\begin{equation}
    \pi_1: M \times F \to M, \quad \pi_2: M \times F \to F.
\end{equation}
Fixing spin structures on $M$ and $F$ (that is, on $(TM,g)$ and $(TF,h)$ respectively), we describe the induced spin structure on $(T(M \times F), \bar{g})$. Pulling back the chosen spin structures via $\pi_1$ and $\pi_2$ gives spin structures on the bundles $(\pi_1^*TM,\pi_1^*g)$ and $(\pi_2^*TF,\pi_2^*h)$. The latter determines a spin structure for the bundle $(\pi_2^*TF, e^{-\frac{2f}{m}} \pi_2^*h)$ which is hereafter called $V$ \cite[\S 2, Remark 1.9]{lawson2016spin}. Since
\begin{equation}
    (T(M \times F),\bar{g}) \cong (\pi_1^*TM,\pi_1^*g) \oplus V,
\end{equation}
and the two summands on the right are now endowed with spin structures, their direct sum also gets a spin structure \cite[\S 2, Proposition 1.15]{lawson2016spin}. This is the induced spin structure on $(T(M \times F), \bar{g})$.

The spin structures on $(TM,g)$, $(TF,h)$, $V = (\pi_2^*TF, e^{-\frac{2f}{m}} \pi_2^*h)$ and $(T(M \times F),\bar{g})$ induce the spinor bundles $\Sigma M$, $\Sigma F$, $\Sigma V$ and $\bar{\Sigma}(M \times F)$ respectively, as listed in Table \ref{tab:spinor-bundles}.
\begin{table}
    \begin{center}
    \begin{tabular}{ |c|c|c| }
    \hline
    Spinor bundle & Base space & Module over... \\
    \hline
    $\Sigma M$ & $M$ & $\Cl(TM,g)$ \\
    $\Sigma F$ & $F$ & $\Cl(TF,h)$ \\
    $\Sigma V$ & $M \times F$ & $\Cl(\pi_2^*TF,e^{-\frac{2f}{m}}\pi_2^*h)$ \\
    $\bar{\Sigma}(M \times F)$ & $M \times F$ & $\Cl(T(M \times F),\bar{g})$ \\
    \hline
    \end{tabular}
    \end{center}
    \caption{The spinor bundles we will use.}
    \label{tab:spinor-bundles}
\end{table}
We will describe some relationships between these bundles and the structures on them. Firstly, since
$\Sigma F$ and $\pi_2^*\Sigma F$ are Clifford modules over $\Cl(TF,h)$ and $\Cl(\pi_2^*TF,\pi_2^*h)$ respectively, the averaging procedure described in \S\ref{subsec:generalities-spin} applies to give Hermitian metrics
% $\langle\cdot,\cdot\rangle_{\pi_1^*({}^\diamond\mathbf{\Sigma}M)}$,
$\langle \cdot,\cdot\rangle_{\Sigma F}$ and $\langle \cdot,\cdot \rangle_{\pi_2^*\Sigma F}$. It is not hard to see that $\langle \cdot,\cdot \rangle_{\pi_2^*\Sigma F} = \pi_2^*\langle \cdot,\cdot \rangle_{\Sigma F}$. The averaging procedure can also be used to define $\langle \cdot,\cdot \rangle_{\Sigma V}$ on $\Sigma V$; on the other hand, we have the following useful identification of $\Sigma V$ together with its inner product.
\begin{lemma} \label{lem:SigmaV}
    There is a bundle isometry $(\Sigma V, \langle \cdot,\cdot \rangle_{\Sigma V}) \cong (\pi_2^*\Sigma F, \langle \cdot,\cdot \rangle_{\pi_2^*\Sigma F})$. If \emph{``$\cdot$''} is the Clifford multiplication on $\pi_2^*\Sigma F$, then the Clifford multiplication \emph{``$\hatdot$''} on $\Sigma V$ is given under the identification $\Sigma V \cong \pi_2^*\Sigma F$ by
    \begin{equation} \label{eq:cliff-mult-V}
        \pi_2^*TF \otimes \pi_2^*\Sigma F \to \pi_2^*\Sigma F, \quad (v,\nu) \mapsto v \hatdot \nu = e^{-\frac{f}{m}} v \cdot \nu.
    \end{equation}
\end{lemma}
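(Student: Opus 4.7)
The key observation is that $V = (\pi_2^*TF, e^{-\frac{2f}{m}}\pi_2^*h)$ and $(\pi_2^*TF, \pi_2^*h)$ share the same underlying smooth vector bundle and differ only by a conformal rescaling of the fiber metric. My first step is to identify their oriented orthonormal frame bundles via the $\SO_m$-equivariant map
\begin{equation}
    P_{\SO}(\pi_2^*TF, \pi_2^*h) \longrightarrow P_{\SO}V, \qquad (e_1,\ldots,e_m) \longmapsto (e^{f/m}e_1,\ldots,e^{f/m}e_m),
\end{equation}
which is a bundle isomorphism because $|e^{f/m}e_i|_V^2 = e^{-\frac{2f}{m}}\cdot e^{\frac{2f}{m}}|e_i|_h^2 = 1$. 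This canonically lifts to an isomorphism of spin structures $P_{\Spin}(\pi_2^*TF, \pi_2^*h) \cong P_{\Spin}V$, and hence—upon associating the irreducible Clifford representation—to a bundle isomorphism $\Sigma V \cong \Sigma(\pi_2^*TF,\pi_2^*h) = \pi_2^*\Sigma F$.

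Next, I would extract the Clifford multiplication on $\Sigma V$ under this identification. A $\pi_2^*h$-orthonormal basis $\{e_i\}$ corresponds to the $V$-orthonormal basis $\{\tilde{e}_i := e^{f/m}e_i\}$, and since the Clifford relations
\begin{equation}
    \tilde{e}_i \hatdot \tilde{e}_j + \tilde{e}_j \hatdot \tilde{e}_i = -2\delta_{ij} = e_i \cdot e_j + e_j \cdot e_i
\end{equation}
match, the identification forces $\tilde{e}_i \hatdot$ to act on $\pi_2^*\Sigma F$ as $e_i \cdot$. Writing a general vector $v \in \pi_2^*TF$ as $v = v^i e_i = e^{-f/m}v^i \tilde{e}_i$, we obtain
\begin{equation}
    v \hatdot \nu = e^{-f/m} v^i (\tilde{e}_i \hatdot \nu) = e^{-f/m} v^i (e_i \cdot \nu) = e^{-f/m}\, v \cdot \nu,
\end{equation}
which is \eqref{eq:cliff-mult-V}.

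For the metric statement, both $\langle \cdot,\cdot \rangle_{\Sigma V}$ and $\langle \cdot,\cdot \rangle_{\pi_2^*\Sigma F}$ are produced by the averaging procedure \eqref{eq:averaging}. Under the identifications above, the finite subgroup $\Gamma_x^V \subset \Cl(V_x)$ generated by $\{\tilde{e}_i\}$ is carried by the Clifford algebra isomorphism $v \mapsto e^{-f/m} v$ onto the subgroup $\Gamma_x \subset \Cl(\pi_2^*T_xF)$ generated by $\{e_i\}$, and the two groups act by the same linear operators on the common underlying spinor space. Starting the averaging from a common initial Hermitian form then yields identical averaged metrics, so the bundle isomorphism is an isometry (and the scaling ambiguity of the averaging construction is fixed by using the same initial form on both sides). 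The only delicate point is verifying that the rescaling of frame bundles lifts uniquely to spin structures and intertwines the Clifford module structures; this is essentially the conformal invariance of spin structures, but it merits being spelled out to pin down the precise identification. Everything else is a short unwinding of definitions.
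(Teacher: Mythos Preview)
Your proof is correct and follows exactly the approach the paper has in mind: both rely on the conformal invariance of spin structures, namely that rescaling the bundle metric by $e^{-2f/m}$ leaves the spin structure and spinor bundle unchanged while rescaling Clifford multiplication by $e^{-f/m}$. The paper's own proof is simply a one-line citation to \cite[pp.~69]{bourguignon2015spinorial} for this fact, whereas you have unpacked the argument explicitly (frame-bundle rescaling, lift to spin, matching Clifford relations, and agreement of the averaged Hermitian metrics); nothing more is needed.
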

\begin{proof}
    This is because $V$ is obtained by multiplying the bundle metric on $(\pi_2^*TF,\pi_2^*h)$ by the conformal factor $e^{-\frac{2f}{m}}$ (see e.g. \cite[pp.69]{bourguignon2015spinorial}, whose discussion generalizes to spin structures on vector bundles).
\end{proof}
We will use ``$\cdot$'' to denote the Clifford multiplication on $\Sigma M$, $\Sigma F$, $\bar{\Sigma}(M \times F)$, and their pullbacks. In light of Lemma \ref{lem:SigmaV}, $\Sigma V$ will be understood to be the vector bundle $\pi_2^*\Sigma F$ equipped with the Clifford multiplication ``$\hatdot$'' given by \eqref{eq:cliff-mult-V}. The next proposition identifies $\bar{\Sigma}(M \times F)$ and its Clifford module structure.
\begin{proposition} \label{prop:spinor-id}
    We have
    \begin{equation} \label{eq:spinor-id-1}
        \bar{\Sigma}(M \times F) \cong \pi_1^*({}^\diamond \mathbf{\Sigma}M) \otimes \Sigma V
    \end{equation}
    as vector bundles, where
    \begin{equation} \label{eq:spinor-id-2}
        {}^\diamond \mathbf{\Sigma}M = 
        \begin{cases}
            \Sigma M & \text{if } n \text{ or } m \text{ is even}, \\
            \Sigma M \oplus \Sigma M & \text{if } n \text{ and } m \text{ are odd}.
        \end{cases}
    \end{equation}
    Under this identification, the structure of $\bar{\Sigma}(M \times F)$ as a module over $\Cl(T(M \times F),\bar{g})$ is given as follows. Let $\phi \in \pi_1^*{}^\diamond(\mathbf{\Sigma}M)$ and $\nu \in \Sigma V$. The Clifford multiplication of $\phi \otimes \nu \in \bar{\Sigma}(M \times F)$ by $(x,v) \in TM \oplus TF$ is
    \begin{equation} \label{eq:cliff-mult-MxF}
        (x,v) \cdot (\phi \otimes \nu) = \begin{cases}
            (x \cdot \phi) \otimes \nu + \bar{\phi} \otimes (v \hatdot \nu) & n \text{ even} \\
            (x \cdot \phi) \otimes \bar{\nu} + \phi \otimes (v \hatdot \nu) & n \text{ odd, } m \text{ even} \\
            (x \cdot \phi_1 \oplus -x \cdot \phi_2) \otimes \nu + (\phi_2 \oplus \phi_1) \otimes (v \hatdot \nu) & n,m \text{ odd}.
        \end{cases}
    \end{equation}
    For the last case we have written $\phi = \phi_1 + \phi_2 \in \Sigma M \oplus \Sigma M$.
\end{proposition}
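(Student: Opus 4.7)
The plan is to derive Proposition \ref{prop:spinor-id} from the general description of spinor bundles on direct sums of spin vector bundles, as reviewed in \cite[\S 3.2]{roos2020dirac}. By construction of the induced spin structure on the warped product, $(T(M \times F), \bar{g})$ splits orthogonally as the sum of spin vector bundles $\pi_1^*TM \oplus V$, where $V = (\pi_2^*TF, e^{-\frac{2f}{m}}\pi_2^*h)$. Since spin structures pull back canonically, $\Sigma(\pi_1^*TM) \cong \pi_1^*\Sigma M$. Thus the task reduces to expressing $\Sigma(E_1 \oplus E_2)$ as a bundle, together with its Clifford module structure, in terms of $\Sigma E_1$ and $\Sigma E_2$ for spin vector bundles $E_1, E_2$ of ranks $n$ and $m$ over the common base $M \times F$.

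At the fiberwise, representation-theoretic level, this is governed by how a complex irreducible module for $\Cl(\R^{n+m})$ restricts to the subalgebra generated by $\Cl(\R^n)$ and $\Cl(\R^m)$, subject to the requirement that vectors from the two summands anticommute. If $n$ is even, then $\Sigma_n$ carries a $\mathbb{Z}/2$-grading by the chirality element $\omega_M$ and $\Sigma_{n+m} \cong \Sigma_n \otimes \Sigma_m$; Clifford multiplication by $x \in \R^n$ acts as $x \cdot \otimes \mathrm{id}$, while $v \in \R^m$ acts by $\omega_M \cdot \otimes v\cdot$, and the map $\phi \mapsto \bar{\phi}$ appearing in \eqref{eq:cliff-mult-MxF} is precisely multiplication by $\omega_M$. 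If $n$ is odd and $m$ is even, the roles of the two factors are swapped. If both $n$ and $m$ are odd, the tensor product $\Cl(\R^n) \otimes \Cl(\R^m)$ is not simple, and the correct identification requires a doubling: one replaces $\Sigma_n$ by $\Sigma_n \oplus \Sigma_n$, with $x$ acting diagonally with opposite signs on the two summands, and $v$ swapping them. These three cases correspond to the three branches of \eqref{eq:cliff-mult-MxF}; on the underlying bundle level they collapse to the two branches of \eqref{eq:spinor-id-2}, which only distinguishes whether one of $n, m$ is even.

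The next step is to globalize. All of the above constructions, namely the chirality involution, the doubling when both dimensions are odd, and the actions of tangent vectors, are natural with respect to $\Spin_n \times \Spin_m$, so they extend from the fibers to yield the asserted bundle isomorphism via the associated bundle construction applied to the product of principal spin bundles. The only feature specific to the warped product is the conformal rescaling in $V$; but Lemma \ref{lem:SigmaV} already identifies $\Sigma V$ with $\pi_2^*\Sigma F$ as vector bundles and absorbs this rescaling into the Clifford multiplication ``$\hatdot$''. Consequently one may use $\pi_2^*\Sigma F$ in place of $\Sigma V$ in the representation-theoretic derivation, provided that ``$\hatdot$'' replaces ``$\cdot$'' whenever a vector $v \in \pi_2^*TF$ acts on the $\Sigma V$-factor. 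This is exactly how $v \hatdot \nu$ enters \eqref{eq:cliff-mult-MxF}.

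The main obstacle is the case where $n$ and $m$ are both odd: one must justify the doubling of $\Sigma M$, make a consistent choice of signs so that the Clifford relation $(x, v) \cdot (x, v) = -(|x|_g^2 + |v|_{\bar{g}}^2)$ holds, and verify that Clifford multiplication by a pure $TM$-vector anticommutes with that by a pure $TF$-vector. The sign asymmetry $(x \cdot \phi_1) \oplus (-x \cdot \phi_2)$ in \eqref{eq:cliff-mult-MxF}, combined with the swap $(\phi_2 \oplus \phi_1)$ in the $v$-action, is precisely what produces the required anticommutation consistently on both summands; checking this compatibility is then routine algebra once the setup is in place.
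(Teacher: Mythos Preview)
Your proposal is correct and follows the same approach as the paper: both reduce the statement to the general description of spinor bundles on Riemannian submersions given in \cite[\S3.2]{roos2020dirac}, with the paper simply citing equations (10), (11) and Notation~1 there without further elaboration. Your version spells out the representation-theoretic content behind that citation (the parity case split, the role of the chirality operator, the doubling when $n,m$ are both odd) and explicitly records how Lemma~\ref{lem:SigmaV} absorbs the conformal factor into ``$\hatdot$'', but this is exposition of the same argument rather than a different route.
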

\begin{proof}
    Since $\pi_1: (M \times F, \bar{g}) \to (M,g)$ is clearly a Riemannian submersion, the proposition follows from the discussion in \cite[\S 3.2]{roos2020dirac} (see specifically equations (10), (11) and Notation 1 there).
\end{proof}
One verifies that, as should be the case, applying Clifford multiplication by $(x,v)$ twice has the same effect as scaling by $-\bar{g}((x,v),(x,v))$.

The Hermitian metric on $\bar{\Sigma}(M \times F)$ is again obtained by the averaging procedure from \S\ref{subsec:generalities-spin}. Namely, using the identification \eqref{eq:spinor-id-1}, define an initial Hermitian metric on $\bar{\Sigma}(M \times F)$ by
\begin{equation} \label{eq:hermitian}
    (\phi \otimes \nu, \phi' \otimes \nu')_{\bar{\Sigma}(M \times F)} = \langle \phi, \phi' \rangle_{\pi_1^*({}^\diamond\mathbf{\Sigma}M)} \langle \nu, \nu' \rangle_{\Sigma V}
\end{equation}
and extending linearly.
For each $(x,y) \in M \times F$, let $\Gamma_{x,y}$ be the finite Clifford subgroup generated by an orthonormal basis for $(T_{(x,y)}(M \times F),\bar{g})$ of the form
\begin{equation}
    \{e_1,\ldots,e_n,\varepsilon_1,\ldots,\varepsilon_m\}
\end{equation}
where the $e_i$ are orthonormal for $(T_x M, g)$ and the $\varepsilon_j$ are orthonormal for $(T_y F, e^{-\frac{2f(x)}{m}}h)$. The final Hermitian metric on $\bar{\Sigma}(M \times F)$ is then given by averaging as in \eqref{eq:averaging}:
\begin{equation}
    \langle \phi \otimes \nu, \phi' \otimes \nu' \rangle_{\bar{\Sigma}(M \times F)} = \frac{1}{|\Gamma_{x,y}|} \sum_{\tau \in \Gamma_{x,y}} (\tau \cdot (\phi \otimes \nu), \tau \cdot (\phi' \otimes \nu'))_{\bar{\Sigma}(M \times F)}.
\end{equation}
We leave it to the reader to check that $\langle \cdot,\cdot \rangle_{\bar{\Sigma}(M \times F)}$ actually coincides with $(\cdot,\cdot)_{\bar{\Sigma}(M \times F)}$. (This follows from the fact that $\langle \cdot,\cdot \rangle_{\pi_1^*({}^\diamond\mathbf{\Sigma}M)}$ and $\langle \cdot,\cdot \rangle_{\Sigma V}$ are obtained from the averaging procedure, and averaging such a metric does not yield a different metric.) Thus we have:
\begin{proposition} \label{prop:warped-spinor-metric}
    The Hermitian metric on $\bar{\Sigma}(M \times F) = \pi_1^*({}^\diamond \mathbf{\Sigma}M) \otimes \Sigma V$ is given by
    \begin{equation}
        \langle \phi \otimes \nu, \phi' \otimes \nu' \rangle_{\bar{\Sigma}(M \times F)} = \langle \phi, \phi' \rangle_{\pi_1^*({}^\diamond\mathbf{\Sigma}M)} \langle \nu, \nu' \rangle_{\Sigma V}
    \end{equation}
    on decomposable spinors, and extending linearly.
\end{proposition}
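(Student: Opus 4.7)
The plan is to show that the tensor product metric $(\cdot,\cdot)_{\bar{\Sigma}(M \times F)}$ defined in \eqref{eq:hermitian} is already invariant under the Clifford action of $\Gamma_{x,y}$, so that the averaging procedure \eqref{eq:averaging} leaves $(\cdot,\cdot)$ unchanged and hence agrees with $\langle\cdot,\cdot\rangle_{\bar{\Sigma}(M \times F)}$. Recall that $\Gamma_{x,y}$ is generated by the basis $\{e_1,\ldots,e_n,\varepsilon_1,\ldots,\varepsilon_m\}$, where the $e_i$ are orthonormal in $(T_x M,g)$ and the $\varepsilon_j$ are orthonormal in $(T_y F,e^{-2f(x)/m}h)$. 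The key convenience is that each generator has either $v=0$ (for $e_i$) or $x=0$ (for $\varepsilon_j$), which kills one of the two summands in \eqref{eq:cliff-mult-MxF} and so avoids the cross terms that would appear for a general tangent vector $(x,v) \in TM \oplus TF$.

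The next step is to verify that each $e_i$ and each $\varepsilon_j$ acts unitarily on $\bar{\Sigma}_{(x,y)}(M \times F) = \pi_1^*({}^\diamond\mathbf{\Sigma}M) \otimes \Sigma V$ with respect to $(\cdot,\cdot)$. I would invoke three standard ingredients: (i) Clifford multiplication by $e_i$ is unitary on $\Sigma M$ by the averaging construction applied to $\Sigma M$, and similarly $\varepsilon_j$ acts unitarily on $\Sigma V$, using Lemma \ref{lem:SigmaV} and the fact that $\varepsilon_j$ has $\bar{g}$-norm one; (ii) the conjugation $\phi \mapsto \bar{\phi}$ preserves $\langle\cdot,\cdot\rangle_{\Sigma M}$ because $\hat{\Sigma}^+M \perp \hat{\Sigma}^-M$; (iii) the involutions $\phi_1 \oplus \phi_2 \mapsto \phi_1 \oplus -\phi_2$ and $\phi_1 \oplus \phi_2 \mapsto \phi_2 \oplus \phi_1$ preserve the orthogonal direct sum metric on $\Sigma M \oplus \Sigma M$. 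Each of the three cases of \eqref{eq:cliff-mult-MxF} then reduces to a short bilinear computation; for example, in the $n$-even case one checks
\begin{align}
    (e_i \cdot (\phi \otimes \nu),\, e_i \cdot (\phi' \otimes \nu'))_{\bar{\Sigma}(M \times F)} &= \langle e_i \cdot \phi, e_i \cdot \phi' \rangle \langle \nu, \nu' \rangle = \langle \phi, \phi' \rangle \langle \nu, \nu' \rangle, \\
    (\varepsilon_j \cdot (\phi \otimes \nu),\, \varepsilon_j \cdot (\phi' \otimes \nu'))_{\bar{\Sigma}(M \times F)} &= \langle \bar{\phi}, \bar{\phi}' \rangle \langle \varepsilon_j \hatdot \nu, \varepsilon_j \hatdot \nu' \rangle = \langle \phi, \phi' \rangle \langle \nu, \nu' \rangle,
\end{align}
with entirely analogous checks for the $n$-odd, $m$-even case and the $n,m$-odd case.

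Once $\Gamma_{x,y}$-invariance of $(\cdot,\cdot)$ is established, every summand in the average \eqref{eq:averaging} equals $(\cdot,\cdot)$, so the averaged Hermitian metric is exactly $(\cdot,\cdot)$, giving the formula stated in the proposition on decomposable spinors and, by sesquilinearity, on all of $\bar{\Sigma}(M \times F)$. The only real obstacle is bookkeeping in the $n,m$-odd case, where ${}^\diamond\mathbf{\Sigma}M = \Sigma M \oplus \Sigma M$ and both Clifford generators mix the two copies via the swap and sign-flip involutions; however, since both are orthogonal with respect to the natural direct sum metric, the unitarity check goes through just as in the other cases.
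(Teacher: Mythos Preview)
Your proposal is correct and follows exactly the approach the paper indicates: the paper leaves the verification to the reader, noting only that since $\langle\cdot,\cdot\rangle_{\pi_1^*({}^\diamond\mathbf{\Sigma}M)}$ and $\langle\cdot,\cdot\rangle_{\Sigma V}$ already arise from the averaging procedure, averaging the tensor product metric does not change it. Your write-up simply carries out this check on the generators of $\Gamma_{x,y}$ case by case, which is precisely what the paper's hint asks for.
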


\subsection{The warped product spin connection and Dirac operator} \label{subsec:warped-Dirac}

In this subsection, we compute the connection and Dirac operator on $\bar{\Sigma}(M \times F)$ by adapting the discussion in \cite{roos2020dirac}. Recalling the orthogonal decomposition $(T(M \times F),\bar{g}) \cong (\pi_1^*TM, \pi_1^*g) \oplus V$, define the projections
\begin{equation}
    (\cdot)^H: T(M \times F) \to \pi_1^*TM, \quad (\cdot)^V: T(M \times F) \to V.
\end{equation}
Let $\bar{\nabla}$ be the Levi-Civita connection on $(T(M\times F),\bar{g})$. Define the 2-tensors $T, A$ which act on $X,Y \in T(M \times F)$ by
\begin{align}
    T(X,Y) &= (\bar{\nabla}_{X^V} Y^V)^H + (\bar{\nabla}_{X^V} Y^H)^V, \\
    A(X,Y) &= (\bar{\nabla}_{X^H} Y^V)^H + (\bar{\nabla}_{X^H} Y^H)^V.
\end{align}
These were originally introduced by O'Neill \cite{o1966fundamental} to study the curvatures of a Riemannian submersion.

We now choose convenient local frames for $(T(M \times F),\bar{g})$. Let $(\xi_1,\ldots,\xi_n)$ be a local orthonormal frame for $(TM,g)$. Pulling this back by $\pi_1$ gives $n$  local orthonormal vector fields on $T(M \times F)$ which we also call $\xi_1,\ldots,\xi_n$. Let $(\eta_1,\ldots,\eta_m)$ be a local orthonormal frame for $(TF,h)$. Pulling this back by $\pi_2$ gives $m$ local vector fields on $T(M \times F)$ which we also call $\eta_1,\ldots,\eta_m$. Define $\zeta_i = e^{\frac{f}{m}}\eta_i$ for $i=1,\ldots,m$. Then $$(\xi_1,\ldots,\xi_n,\zeta_1,\ldots,\zeta_m)$$ is a local orthonormal frame for $(T(M \times F),\bar{g})$.
We will also assume that $(\xi_1,\ldots,\xi_n)$ coincides with normal coordinates for $(M,g)$ at a point $x_0$, and that $(\eta_1,\ldots,\eta_m)$ coincides with normal coordinates for $(F,h)$ at $y_0$. Using that $\bar{g} = g \oplus e^{-\frac{2f}{m}}h$, routine computations yield the following identities at $(x_0,y_0)$:
\begin{gather}
    \bar{\nabla}_{\xi_\alpha}\xi_\beta = \bar{\nabla}_{\xi_\alpha}\zeta_i = 0, \quad \bar{\nabla}_{\zeta_i}\zeta_j = \frac{1}{m}\delta_{ij}\nabla f, \quad \bar{g}(\bar{\nabla}_{\zeta_i}\zeta_j,\zeta_k) = 0, \label{eq:connection-identities} \\
    A(\xi_\alpha,\xi_\beta) = A(\xi_\alpha,\zeta_i)=0, \quad T(\zeta_i,\zeta_j) = \frac{1}{m}\delta_{ij}\nabla f. \label{eq:connection-identities1}
\end{gather}

\begin{definition} \label{def:split-on-frame}
    We call the frame $(\xi_1,\ldots,\xi_n,\zeta_1,\ldots,\zeta_m)$ constructed above a \emph{split orthonormal frame} for $(T(M \times F),\bar{g})$ centered at the point $(x_0,y_0) \in M \times F$. Thus the identities \eqref{eq:connection-identities}, \eqref{eq:connection-identities1} hold at $(x_0,y_0)$.
\end{definition}

The next theorem computes the connection and Dirac operator on $\bar{\Sigma}(M \times F)$ by describing how they behave on spinors which are decomposable with respect to the identification \eqref{eq:spinor-id-1}. This actually computes the full connection and Dirac operator, because $\bar{\Sigma}(M \times F)$ is locally trivialized by decomposable spinors, and the connection and Dirac operator obey Leibniz-type rules. For the Dirac operator, this is
\begin{equation}
    \bar{D}(u\psi) = u\bar{D}\psi + \bar{\nabla} u \cdot \psi, \quad u \in \C^\infty(M \times F), \quad \psi \in \Gamma(\bar{\Sigma}(M \times F)).
\end{equation}
\begin{theorem} \label{thm:warped-dirac}
    Let $\psi \in \Gamma(\bar{\Sigma}(M \times F))$ be a spinor of the form $\psi = \pi_1^*\phi \otimes \pi_2^*\nu$, where $\phi \in \Gamma({}^\diamond\mathbf{\Sigma}M)$ and $\nu \in \Gamma(\Sigma F)$. For simplicity, identify $\phi$ with $\pi_1^*\phi$ and $\nu$ with $\pi_2^*\nu$. Write $\bar{\nabla}$ and $\bar{D}$ for the connection and Dirac operator, respectively, on $\bar{\Sigma}(M \times F)$. For all $X \in TM$, $Y \in TF$ we have
    \begin{align}
        \bar{\nabla}_X \psi &= \nabla_X \phi \otimes \nu, \label{eq:conn-H0} \\
        \bar{\nabla}_Y \psi &= \phi \otimes \nabla^F_Y \nu + \frac{1}{2m} Y \cdot \nabla f \cdot \psi, \label{eq:conn-V0} \\
        \bar{D}\psi &= \begin{cases}
            D_f \phi \otimes \nu + e^{\frac{f}{m}} \bar{\phi} \otimes D^F\nu & n \text{ even} \\
            D_f \phi \otimes \bar{\nu} + e^{\frac{f}{m}} \phi \otimes D^F\nu & n \text{ odd, }m \text{ even} \\
            (D_f \phi_1 \oplus -D_f \phi_2) \otimes \nu + e^{\frac{f}{m}} (\phi_2 \oplus \phi_1) \otimes D^F\nu & n,m \text{ odd},
        \end{cases} \label{eq:Dirac0}
    \end{align}
    %%%%%%
    % The pullbacks have been omitted in notation to keep things neat. What it should *really* be is e.g. for the case n even,
    % D\psi = \pi_1^*(D_f \phi) \otimes \pi_2^*\nu + e^{\frac{\pi_1^*f}{m}} \pi_1^*\bar{\phi} \otimes \pi_2^*(D^F\nu).
    %%%%%%
    where $D_f = D - \frac{1}{2}\nabla f \cdot$ is the weighted Dirac operator on $\Sigma M$, and $D^F$ is the Dirac operator on $\Sigma F$.
\end{theorem}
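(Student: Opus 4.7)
The plan is to work in the split orthonormal frame $(\xi_1,\ldots,\xi_n,\zeta_1,\ldots,\zeta_m)$ of Definition \ref{def:split-on-frame} and evaluate both formulas at the center point $(x_0,y_0)$. Since every expression in the conclusion is tensorial (pointwise) in $X,Y$ and in the spinors, it suffices to establish the identities at an arbitrary such center. Throughout, use the spin connection formula \eqref{eq:spin-connection} applied to $\bar{\nabla}$, inserting the Levi--Civita relations \eqref{eq:connection-identities}. The key simplification is that $A\equiv 0$ on the horizontal distribution and $T$ is concentrated in the single term $T(\zeta_i,\zeta_j)=\frac{1}{m}\delta_{ij}\nabla f$, so all the nontrivial $\Omega$-symbols of $\bar{\nabla}$ at $(x_0,y_0)$ reduce either to those of $\nabla$ on $M$, those of $\nabla^F$ on $F$ (pulled up through the conformal rescaling $\zeta_i = e^{f/m}\eta_i$), or to terms built from $\nabla f$.

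First I would prove \eqref{eq:conn-H0}. Plugging $X=\xi_\alpha$ into \eqref{eq:spin-connection} on $\bar{\Sigma}(M\times F)$, the identities $\bar{\nabla}_{\xi_\alpha}\xi_\beta=0$ and $\bar{\nabla}_{\xi_\alpha}\zeta_i=0$ kill every Christoffel-type term except those coming from $g(\nabla_{\xi_\alpha}\xi_\beta,\xi_\gamma)$ on $M$ itself. Combined with the fact that $\pi_1^*\phi$ is constant along $F$-directions, this yields exactly $\nabla_X\phi \otimes \nu$ under the identification of Proposition \ref{prop:spinor-id}. Next, for \eqref{eq:conn-V0}, take $Y=\zeta_i$ (or rescale to $\eta_i$): the purely vertical Christoffel symbols $\bar{g}(\bar{\nabla}_{\zeta_i}\zeta_j,\zeta_k)=0$ contribute the genuine $\nabla^F$-part, while the mixed term $\bar{\nabla}_{\zeta_i}\zeta_j = \frac{1}{m}\delta_{ij}\nabla f$ produces a horizontal-vertical off-diagonal correction of the form $\frac{1}{2}\sum_j \bar{g}(\bar{\nabla}_{\zeta_i}\zeta_j,\xi_\alpha)\,\zeta_j \cdot \xi_\alpha \cdot \psi$. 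Collecting these and unpacking the Clifford multiplication via \eqref{eq:cliff-mult-MxF} and \eqref{eq:cliff-mult-V} gives the claimed $\frac{1}{2m} Y\cdot \nabla f \cdot \psi$ correction; the constant normalization $\frac{1}{2m}$ will follow from carefully counting the number of vertical indices contributing.

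Given the connection formulas, I would assemble $\bar{D}\psi = \sum_\alpha \xi_\alpha \cdot \bar{\nabla}_{\xi_\alpha}\psi + \sum_i \zeta_i \cdot \bar{\nabla}_{\zeta_i}\psi$. Applying \eqref{eq:cliff-mult-MxF} to the horizontal sum produces the Dirac-like term $D\phi \otimes (\cdot)$ with the parity-dependent insertion of a bar (on $\phi$, $\nu$, or the $\oplus$-swap), exactly as recorded in the theorem. For the vertical sum, the $\nabla^F_{\eta_i}\nu$ piece, multiplied by $\zeta_i = e^{f/m}\eta_i$, produces $e^{f/m}\eta_i \cdot \nabla^F_{\eta_i}\nu = e^{f/m} D^F\nu$ in the $\Sigma F$ slot after the identification of Lemma \ref{lem:SigmaV} absorbs the $e^{-f/m}$ from the hat-multiplication; meanwhile $\sum_i \zeta_i\cdot (\frac{1}{2m}\zeta_i \cdot \nabla f \cdot \psi)$ collapses to $-\frac{1}{2}\nabla f \cdot \psi$ using $\zeta_i \cdot \zeta_i = -1$ summed $m$ times. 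Combining this $-\frac{1}{2}\nabla f\cdot$ with the $D$ arising from the horizontal sum is precisely $D_f\phi$.

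The main obstacle will be the bookkeeping in the Clifford-multiplication identity \eqref{eq:cliff-mult-MxF}: one must consistently track how horizontal multiplication permutes $\hat{\Sigma}^\pm$ (hence the conjugations $\bar\phi$ and $\bar\nu$), and how the signs flip in the $(n,m)$ both-odd case where ${}^\diamond\mathbf{\Sigma}M = \Sigma M \oplus \Sigma M$ gets the swap $(\phi_1,\phi_2)\mapsto (\phi_2,\phi_1)$ under vertical multiplication. A clean way to handle all three cases uniformly is to fix an orthonormal basis, compute $\bar D$ once at $(x_0,y_0)$, and then read off the three formulas case by case rather than attempting a single parity-free derivation; the $e^{f/m}$ factor in front of $D^F\nu$ is then the most error-prone constant to track, and I would double-check it by verifying that $\bar D$ is symmetric with respect to the Hermitian metric of Proposition \ref{prop:warped-spinor-metric}.
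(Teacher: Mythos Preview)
Your proposal is correct and follows essentially the same architecture as the paper: work in a split orthonormal frame centered at $(x_0,y_0)$, use the Levi--Civita identities \eqref{eq:connection-identities}--\eqref{eq:connection-identities1} to reduce the spin connection on $\bar{\Sigma}(M\times F)$ to pieces coming from $M$, from $F$, and a single $\nabla f$--correction, and then assemble $\bar{D}$ as $\sum_\alpha \xi_\alpha\cdot\bar{\nabla}_{\xi_\alpha} + \sum_i \zeta_i\cdot\bar{\nabla}_{\zeta_i}$, treating the three parity cases of \eqref{eq:cliff-mult-MxF} separately.

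The one methodological difference is that the paper does not compute $\bar{\nabla}$ directly from the local formula \eqref{eq:spin-connection} as you propose, but instead invokes \cite[Lemma~6]{roos2020dirac}, a ready-made decomposition of the spin connection on a Riemannian submersion in terms of the O'Neill tensors $A$ and $T$ and auxiliary connections $\nabla^{\mathcal{T}},\nabla^{\mathcal{V}},\nabla^Z$. After inserting $A\equiv 0$ and $T(\zeta_i,\zeta_j)=\frac{1}{m}\delta_{ij}\nabla f$, their formulas collapse to exactly what your direct computation would give. Your route is more self-contained (no external lemma needed) and makes transparent where the $\frac{1}{2m}$ comes from --- namely the antisymmetrized pair $\frac{1}{4}\bigl(\bar{g}(\bar{\nabla}_{\zeta_i}\zeta_j,\xi_\alpha)\,\zeta_j\cdot\xi_\alpha + \bar{g}(\bar{\nabla}_{\zeta_i}\xi_\alpha,\zeta_j)\,\xi_\alpha\cdot\zeta_j\bigr)$ --- whereas the paper's route is cleaner in that the submersion machinery packages all of this once and for all.
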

\begin{proof}
    Let $(\xi_1,\ldots,\xi_n,\zeta_1,\ldots,\zeta_m)$ be a split orthonormal frame for $(T(M \times F),\bar{g})$ centered at $(x_0,y_0) \in M \times F$, in the sense of Definition \ref{def:split-on-frame}. Since $\pi_1: (M \times F, \bar{g}) \to (M,g)$ is a Riemannian submersion, we can apply \cite[Lemma 6]{roos2020dirac} to get
    \begin{align}
        \bar{\nabla}_{\xi_\alpha} \psi &= \nabla_{\xi_\alpha}^{\mathcal{T}}\psi + \frac{1}{2} \sum_{\beta=1}^n \xi_\beta \cdot A(\xi_\alpha,\xi_\beta) \cdot \psi, \label{eq:conn-H} \\
        \bar{\nabla}_{\zeta_i} \psi &= \nabla_{\zeta_i}^Z \psi + \frac{1}{2} \sum_{j=1}^m \zeta_j \cdot T(\zeta_i,\zeta_j) \cdot \psi + \frac{1}{4} \sum_{\alpha=1}^n \xi_\alpha \cdot A(\xi_\alpha,\zeta_i) \cdot \psi, \label{eq:conn-V}
    \end{align}
    where
    \begin{alignat}{3}
        \nabla_{\xi_\alpha}^{\mathcal{T}} (\phi \otimes \nu) &:= \nabla_{\xi_\alpha} \phi \otimes \nu + \phi \otimes \nabla_{\xi_\alpha}^{\mathcal{V}}\nu, \qquad
        % \nabla_{\xi_\alpha}\phi &:= d\phi(\xi_\alpha) + \frac{1}{4} \sum_{\beta,\gamma=1}^n \bar{g}(\bar{\nabla}_{\xi_\alpha}\xi_\beta,\xi_\gamma) \xi_\beta \cdot \xi_\gamma \cdot \phi, \label{eq:nabla-phi} \\
        &&\nabla^{\mathcal{V}}_{\xi_\alpha} \nu := d\nu(\xi_\alpha) + \frac{1}{4} \sum_{j,k=1}^m \bar{g}(\bar{\nabla}_{\xi_\alpha} \zeta_j, \zeta_k) \zeta_j \hatdot \zeta_k \hatdot \nu, \\
        \nabla_{\zeta_i}^Z (\phi \otimes \nu) &:= \phi \otimes \nabla_{\zeta_i}^Z \nu,
        &&\nabla_{\zeta_i}^Z \nu := d\nu(\zeta_i) + \frac{1}{4} \sum_{j,k=1}^m \bar{g}(\bar{\nabla}_{\zeta_i}\zeta_j,\zeta_k) \zeta_j \hatdot \zeta_k \hatdot \nu. \label{eq:nabla-Z}
    \end{alignat}
    All computations in the rest of this proof are done at the point $(x_0,y_0)$ where the identities \eqref{eq:connection-identities}, \eqref{eq:connection-identities1} hold. Since $\bar{\nabla}_{\xi_\alpha}\zeta_j = 0$ and $\nu$ is constant in the $M$ directions, we have $\nabla_{\xi_\alpha}^{\mathcal{V}} \nu = 0$. Combined with the fact that $A(\xi_\alpha,\xi_\beta) = 0$, \eqref{eq:conn-H} becomes
    \begin{align}
        \bar{\nabla}_{\xi_\alpha}\psi &= \nabla_{\xi_\alpha}\phi \otimes \nu,
    \end{align}
    which implies \eqref{eq:conn-H0}. Since $T(\zeta_i,\zeta_j) = \frac{1}{m}\delta_{ij}\nabla f$ and $A(\xi_\alpha,\zeta_i) = 0$, \eqref{eq:conn-V} becomes
    \begin{align} \label{eq:conn-V1}
        \bar{\nabla}_{\zeta_i}\psi &= \phi \otimes \nabla^Z_{\zeta_i}\nu + \frac{1}{2m} \zeta_i \cdot \nabla f \cdot \psi.
    \end{align}
    We have $\bar{g}(\bar{\nabla}_{\zeta_i}\zeta_j,\zeta_k) = 0$; also, since the local orthonormal frame $(\eta_1,\ldots,\eta_n)$ for $(F,h)$ coincides with normal coordinates at $y_0$, the spin connection on $\Sigma F$ is $\nabla_{\eta_i}^F \nu = d\nu(\eta_i)$ (see \eqref{eq:spin-connection}).
    Now $\zeta_i = e^{\frac{f}{m}}\eta_i$, and $\eta_i, \nu$ are identified with their images under $\pi_2^*$, so
    \begin{align} \label{eq:nabla-Z-2}
        \nabla^Z_{\zeta_i} \nu = d\nu(\zeta_i) &= e^{\frac{f}{m}}d\nu(\eta_i) = e^{\frac{f}{m}} \nabla^F_{\eta_i} \nu = \nabla^F_{\zeta_i} \nu.
    \end{align}
    Combining this with \eqref{eq:conn-V1} gives \eqref{eq:conn-V0}. If $n$ is even, then by \eqref{eq:conn-H0}, \eqref{eq:conn-V0} and  \eqref{eq:cliff-mult-MxF} we have
    \begin{align}
        \bar{D}\psi &= \sum_{\alpha=1}^n \xi_\alpha \cdot \bar{\nabla}_{\xi_\alpha} \psi + \sum_{i=1}^m \zeta_i \cdot \bar{\nabla}_{\zeta_i} \psi \\
        &= \sum_{\alpha=1}^n (\xi_\alpha \cdot \nabla_{\xi_\alpha}\phi) \otimes \nu + \sum_{i=1}^m \zeta_i \cdot \left( \phi \otimes \nabla_{\zeta_i} \nu + \frac{1}{2} \zeta_i \cdot \nabla f \cdot \psi \right) \\
        &= D\phi \otimes \nu + \sum_{i=1}^m \bar{\phi} \otimes (\zeta_i \hatdot \nabla_{\zeta_i} \nu) - \frac{1}{2} \nabla f \cdot \psi \\
        &= D\phi \otimes \nu - \frac{1}{2}(\nabla f \cdot \phi) \otimes \nu + e^{\frac{f}{m}} \bar{\phi} \otimes \sum_{i=1}^m \eta_i \cdot \nabla^F_{\eta_i}\nu \\
        &= D_f \phi \otimes \nu + e^{\frac{f}{m}}\bar{\phi} \otimes D^F \nu,
    \end{align}
    where the third equality also uses that $\bar{\Sigma}(M \times F)$ is a module over $\Cl(T(M \times F),\bar{g})$ where $\zeta_i$ has unit length. This proves \eqref{eq:Dirac0} in the case that $n$ is even. The other cases are proved similarly, by modifying the above computation according to the Clifford multiplication rules \eqref{eq:cliff-mult-MxF}.
\end{proof}

\section{Applications of spinors on the warped product, and more} \label{sec:applications-Dirac}

Throughout this section, $(M^n,g)$ and $(F^m,h)$ are assumed to be spin manifolds, with $F$ being closed, scalar-flat, of unit volume, and admitting a nonzero parallel spinor $\nu$. For instance, we can take $F$ to be a flat torus with the appropriate spin structure. Using a weight $f \in \C^\infty(M)$, form the warped product $(M \times F, \bar{g} = g \oplus e^{-\frac{2f}{m}}h)$. Barred quantities denote those on the warped product. Spin geometry on the warped product was studied in \S\ref{sec:spin}, and we reuse the notations from there.

\subsection{A spin proof of our positive mass theorem} \label{subsec:spin-proof-PMT}

Let $\mathcal{M} = (M^n,g,e^{-f}d\vol_g,m)$, $m \in \N$ be an AE SMMS. We present an alternative proof of the following special case of Theorem \ref{thm:main-SMMS-PMT} using spinors on the warped product.
\begin{corollary} \label{cor:SMMS-PMT-1}
    Let $\mathcal{M} = (M^n,g,e^{-f}d\vol_g,m)$ be an AE SMMS of order $\tau > \frac{n-2}{2}$ with $m \in \N$, such that $M$ is spin. Also assume $f \in \C^{2,\alpha}_{-\tau}(M)$ and $R_f \in L^1(M,g)$. If $R^m_f \geq 0$, then $\mathfrak{m}(\mathcal{M}) \geq 0$, with equality if and only if $(M^n,g)$ is isometric to $(\R^n,\delta_{ij})$ and $f \equiv 0$.
\end{corollary}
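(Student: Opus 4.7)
The plan is to run Witten's spinor argument directly on the warped product $(M \times F, \bar{g})$, leveraging three facts established earlier in the paper: the scalar curvature of $\bar{g}$ equals $R^m_f$; the warped-product mass of Definition \ref{def:SMMS-mass} equals $\mathfrak{m}(\mathcal{M})$ by Proposition \ref{prop:SMMS-mass-is-mfg}; and the Dirac operator $\bar{D}$ decomposes on decomposable spinors according to Theorem \ref{thm:warped-dirac}. The role of the weighted Dirac operator $D_f$ on $M$ is thus to supply a harmonic spinor on the warped product, where we can then apply the unweighted Witten computation in its standard form.

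Concretely, the first step is to invoke the analysis of Baldauf--Ozuch \cite{baldauf2022spinors} to obtain a weighted Witten spinor $\phi \in \Gamma(\Sigma M)$, that is, a section with $D_f \phi = 0$ which asymptotes to a unit constant spinor $\phi_\infty$ at infinity in the appropriate weighted H\"older sense. Lift $\phi$ to $\psi := \phi \otimes \nu \in \Gamma(\bar{\Sigma}(M \times F))$ via Proposition \ref{prop:spinor-id}; since $\nabla^F \nu = 0$ forces $D^F \nu = 0$, Theorem \ref{thm:warped-dirac} gives $\bar{D}\psi = 0$. Apply the Lichnerowicz formula $\bar{D}^2 = -\bar{\Delta} + R^m_f/4$ on $(M \times F, \bar{g})$ and integrate the resulting pointwise identity $|\bar{\nabla}\psi|^2 + \tfrac{R^m_f}{4}|\psi|^2 = \tfrac{1}{2}\bar{\Delta}|\psi|^2 + \text{divergence}$ over $K_R \times F$, where $K_R \subset M$ is the compact region bounded by the coordinate sphere $S^M_R$. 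The closedness of $F$ eliminates any boundary contribution from the $F$-factor, and as $R \to \infty$ the remaining boundary integral on $S^M_R \times F$ is precisely the Witten boundary integrand for the warped product evaluated on $\psi \to \phi_\infty \otimes \nu$. The same computation that proved Proposition \ref{prop:SMMS-mass-is-mfg}, combined with the normalizations $|\phi_\infty| = |\nu| = 1$ and $\vol(F,h) = 1$, identifies this limit as $\tfrac{1}{4}\mathfrak{m}(\mathcal{M})$, giving
$$\mathfrak{m}(\mathcal{M}) = 4\int_{M \times F} \left( |\bar{\nabla}\psi|^2 + \tfrac{R^m_f}{4}|\psi|^2 \right) d\vol_{\bar{g}},$$
which is manifestly nonnegative when $R^m_f \geq 0$.

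For rigidity, $\mathfrak{m}(\mathcal{M}) = 0$ forces $\bar{\nabla}\psi \equiv 0$. Reading off the components from \eqref{eq:conn-H0} and \eqref{eq:conn-V0}, the horizontal part yields $\nabla \phi \equiv 0$ on $M$, while the vertical part (using $\nabla^F \nu = 0$) forces $Y \cdot \nabla f \cdot \psi \equiv 0$ for every $Y \in TF$, so $\nabla f \equiv 0$. The decay $f \in \C^{2,\alpha}_{-\tau}(M)$ then forces $f \equiv 0$, so $\bar{g}$ is an unwarped product, $\phi$ is a nonzero parallel spinor on the AE manifold $(M,g)$, and the standard chain (parallel spinor $\Rightarrow$ Ricci-flat; AE and Ricci-flat with vanishing mass $\Rightarrow$ flat, via the unweighted rigidity in Theorem \ref{thm:PMT}) concludes $(M,g) \cong (\R^n, \delta_{ij})$.

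The main obstacle is making the asymptotic boundary analysis rigorous on the cylindrically-ended warped product, i.e.\ checking that the Witten boundary integrand on $S^M_R \times F$ converges to exactly $\tfrac{1}{4}\mathfrak{m}(\mathcal{M})|\psi_\infty|^2$. Because $\psi$ is decomposable and $\nu$ is parallel of unit norm, the $F$-integration collapses to a factor of $\vol(F,h)=1$, and the remaining computation is a spinor-decorated version of the one already carried out in \S\ref{subsec:mass-of-SMMS}; no essentially new analytical difficulty arises beyond careful bookkeeping of the extra warped-product connection terms supplied by Theorem \ref{thm:warped-dirac}.
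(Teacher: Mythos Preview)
Your proposal is correct and follows essentially the same route as the paper: construct $\psi = \phi \otimes \nu$ from a Baldauf--Ozuch weighted Witten spinor and a parallel spinor on $F$, apply the unweighted Lichnerowicz--Witten computation on $(M \times F,\bar g)$ to obtain the integral formula for $\mathfrak{m}(\mathcal{M})$, and read off rigidity from $\bar\nabla\psi = 0$ via \eqref{eq:conn-H0}--\eqref{eq:conn-V0}. The only cosmetic differences are that the paper packages the spinor construction into a separate lemma (and handles the $n,m$ both odd case by taking $\phi \oplus 0 \in \Sigma M \oplus \Sigma M$), and concludes the rigidity step via Bishop--Gromov rather than the rigidity clause of Theorem~\ref{thm:PMT}.
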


Since $(M^n,g)$ is AE with $n > 2$, $M_\infty$ is simply connected so the spin structure on $(M^n,g)$ restricts to the trivial one on $M_\infty$. This induces a trivialization of the spinor bundle $\Sigma M$ over $M_\infty$, and hence a (partial) trivialization of $\bar{\Sigma}(M \times F)$ over $M_\infty \times F$. A \emph{constant spinor} means a spinor which is constant in these trivializations. The H\"older spaces $\C^{k,\alpha}_{-\tau}(\bar{\Sigma}(M \times F))$ are also defined in the obvious way, referring to the asymptotic behavior on $M$.

We first construct Witten-type spinors in the spinor bundle $\bar{\Sigma}(M \times F)$ of $(M \times F,\bar{g})$.

\begin{lemma} \label{lem:witten-type-spinor}
    Let $(M^n,g,e^{-f}d\vol_g,m)$, $m \in \N$ be an SMMS satisfying the hypotheses of Corollary \ref{cor:SMMS-PMT-1}.
    Then there exists a spinor $\psi \in \Gamma(\bar{\Sigma}(M \times F))$ such that
    \begin{itemize}
        \item $\psi = \pi_1^*\phi \otimes \pi_2^*\nu$ for some $\phi \in \Gamma({}^\diamond\mathbf{\Sigma}M)$ and $\nu \in \Gamma(\Sigma F)$,
        \item $D_f \phi = 0$, and $\nu$ is a unit norm parallel spinor,
        \item $\bar{D}\psi = 0$, and
        \item There exists $\psi_0 \in \Gamma(\bar{\Sigma}(M \times F))$ such that $\psi_0$ is constant with unit norm on $M_\infty \times F$, and $\psi-\psi_0 \in \C^{2,\alpha}_{-\tau}(\bar{\Sigma}(M \times F))$.
    \end{itemize}
\end{lemma}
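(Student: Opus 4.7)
The plan is to take $\psi = \pi_1^*\phi \otimes \pi_2^*\nu$, where $\nu$ is the fixed parallel unit-norm spinor on $F$ coming from the standing assumptions of \S\ref{sec:applications-Dirac}, and $\phi$ is a weighted Witten spinor on $(M^n,g,f)$ in the sense of Baldauf--Ozuch \cite{baldauf2022spinors}. First I would verify that this ansatz reduces the warped-product equation $\bar{D}\psi = 0$ to the weighted Dirac equation $D_f \phi = 0$ on $M$. Since $\nabla \nu = 0$ we have $D^F \nu = 0$; moreover the volume form on $F$ is parallel, so $\bar{\nu}$ (when defined) is also a parallel spinor on $F$, and is nonzero since $\nu \mapsto \bar{\nu}$ is an involution. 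Substituting into \eqref{eq:Dirac0}, each of the three parity cases collapses to an expression of the form $(D_f \phi) \otimes \nu'$ with $\nu' \in \{\nu,\bar{\nu}\}$ nonzero (in the $n,m$ both odd case one takes $\phi = (\phi_1,0)$ and lets $\phi_1$ be the Witten spinor). Hence $\bar{D}\psi = 0$ is equivalent to $D_f \phi = 0$.

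The core analytic input is then the existence of $\phi \in \Gamma(\Sigma M)$ with $D_f \phi = 0$ asymptoting to a constant unit-norm spinor. Since $n \geq 3$ and $M_\infty$ is simply connected, the spin structure trivializes on $M_\infty$, so constant spinors on the end are well-defined. Pick a constant unit-norm spinor $\phi_0$ on $M_\infty$, extend it smoothly to $M$, and seek $\phi = \phi_0 + \eta$ with
\begin{equation}
    D_f \eta = -D_f \phi_0, \quad \eta \in \C^{2,\alpha}_{-\tau}(\Sigma M).
\end{equation}
The source $D_f \phi_0$ lies in $\C^{1,\alpha}_{-\tau-1}(\Sigma M)$ because $D\phi_0 = 0$ on $M_\infty$ and $\nabla f \in \C^{1,\alpha}_{-\tau-1}$ by assumption. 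Since $D_f = D - \tfrac{1}{2}\nabla f \cdot$ is an asymptotically Euclidean Dirac-type operator with decaying zeroth-order term, the weighted Fredholm and vanishing theory developed in \cite{baldauf2022spinors} yields an isomorphism
\begin{equation}
    D_f : \C^{2,\alpha}_{-\tau}(\Sigma M) \longrightarrow \C^{1,\alpha}_{-\tau-1}(\Sigma M),
\end{equation}
which produces $\eta$ and hence $\phi$.

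Setting $\psi_0 := \pi_1^*\phi_0 \otimes \pi_2^*\nu$ completes the construction. The first three bullets of the lemma are built into the setup and the reduction in the first paragraph. For the fourth, Proposition \ref{prop:warped-spinor-metric} gives $|\psi_0| = |\phi_0| \cdot |\nu| = 1$ pointwise on $M_\infty \times F$, while
\begin{equation}
    \psi - \psi_0 = \pi_1^*(\phi - \phi_0) \otimes \pi_2^*\nu = \pi_1^*\eta \otimes \pi_2^*\nu \in \C^{2,\alpha}_{-\tau}(\bar{\Sigma}(M \times F)),
\end{equation}
since $\eta$ has the required decay on $M$ and $\nu$ is parallel (hence bounded with bounded covariant derivatives).

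The main obstacle is the isomorphism property of $D_f$ between the weighted H\"older spaces. Fredholmness requires showing that $-\tau$ is non-exceptional for the asymptotically Euclidean operator $D_f$, while injectivity hinges on a weighted Lichnerowicz identity of the form $\int_M (|\nabla\phi|^2 + \tfrac{1}{4} R_f |\phi|^2) e^{-f} \, d\vol_g = 0$ for $\phi$ in the $L^2$-kernel of $D_f$, combined with $R_f \geq R_f^m \geq 0$ (which holds in our setting since $m \in \N$). These steps are carried out in detail in \cite{baldauf2022spinors}, so the proof ultimately reduces to citing their existence theorem for weighted Witten spinors.
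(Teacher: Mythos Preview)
Your proposal is correct and follows essentially the same approach as the paper: cite Baldauf--Ozuch for a weighted Witten spinor $\phi$ on $M$, tensor with the unit parallel spinor $\nu$ on $F$, and use Theorem~\ref{thm:warped-dirac} together with Proposition~\ref{prop:warped-spinor-metric} to verify the four bullets (handling the $n,m$ both odd case via $\phi \oplus 0$). The only difference is cosmetic: the paper invokes \cite[Theorem~2.5]{baldauf2022spinors} as a black box for the existence of $\phi$, whereas you sketch the underlying Fredholm/injectivity argument before citing the same reference.
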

\begin{proof}
    The hypotheses allow Theorem 2.5 in \cite{baldauf2022spinors} to be applied, giving a spinor $\phi \in \Gamma(\Sigma M)$ such that
    \begin{itemize}
        \item $D_f\phi = 0$, where $D_f$ is the weighted Dirac operator on $M$, and
        \item There exists $\phi_0 \in \Gamma(\Sigma M)$ such that $\phi_0$ is constant with unit norm on $M_\infty$, and $\phi - \phi_0 \in \C^{2,\alpha}_{-\tau}(\Sigma M)$.
    \end{itemize}
    Take a parallel spinor $\nu \in \Gamma(\Sigma F)$ with $|\nu|_{\Sigma F} = 1$. Then $|\pi_2^*\nu|_{\Sigma V} = 1$ by Lemma \ref{lem:SigmaV}. For simplicity, assume $n$ and $m$ are not both odd. Define
    \begin{equation} \label{eq:psi-defns}
        \psi = \pi_1^*\phi \otimes \pi_2^*\nu, \quad \psi_0 = \pi_1^*\phi_0 \otimes \pi_2^*\nu,
    \end{equation}
    which are sections of $\pi_1^*(\Sigma M) \otimes \Sigma V = \bar{\Sigma}(M \times F)$ (see Proposition \ref{prop:spinor-id}).
    Then $\psi_0$ is constant on $M_\infty \times F$, and Theorem \ref{thm:warped-dirac} gives $\bar{D} \psi = 0$. Moreover, by Proposition \ref{prop:warped-spinor-metric} we have $|\psi_0|_{\bar{\Sigma}(M \times F)} = |\phi_0|_{\Sigma M} |\nu|_{\Sigma V} = 1$ on $M_\infty \times F$. Similarly $|\psi-\psi_0|_{\bar{\Sigma}(M \times F)} = |\phi-\phi_0|_{\Sigma M}$.
    Since $\phi-\phi_0 \in \C^{2,\alpha}_{-\tau}(\Sigma M)$, we have $\psi - \psi_0 \in \C^{2,\alpha}_{-\tau}(\bar{\Sigma}(M \times F))$, as desired.

    In the remaining case where $n$ and $m$ are both odd, the argument is the same, except in \eqref{eq:psi-defns} we replace $\phi$ and $\phi_0$ by $\phi \oplus 0$ and $\phi_0 \oplus 0$ respectively to accommodate the fact that ${}^\diamond\mathbf{\Sigma}M = \Sigma M \oplus \Sigma M$.
\end{proof}

\begin{proof}[Proof of Corollary \ref{cor:SMMS-PMT-1}]
    Let $\mathcal{M} = (M^n,g,e^{-f}d\vol_g,m)$, $m \in \N$ be an AE SMMS satisfying the hypotheses of the corollary. Form the warped product $(M \times F,\bar{g})$, which has scalar curvature $R^m_f$. Let $\psi = \pi_1^*\phi \otimes \pi_2^*\nu \in \Gamma(\bar{\Sigma}(M \times F))$ be the Witten-type spinor provided by Lemma \ref{lem:witten-type-spinor}.
    The Lichnerowicz formula \eqref{eq:Lich} and symmetry of $\bar{D}$ give
    \begin{equation}
        0 = |\bar{D}\psi|^2 = \langle \bar{\nabla}^*\bar{\nabla}\psi, \psi \rangle_{\bar{\Sigma}(M \times F)} + \frac{1}{4}R^m_f |\psi|^2.
    \end{equation}
    Write $\xi = \psi_0 - \psi \in \C^{2,\alpha}_{-\tau}(\bar{\Sigma}(M \times F))$. Integrating the above by parts over $B^M_\rho \times F$, we get
    \begin{align}
        &\int_{B^M_\rho \times F} (|\bar{\nabla}\psi|^2 + \frac{1}{4}R^m_f|\psi|^2) d\vol_{\bar{g}} = \mathrm{Re}\int_{S^M_\rho \times F} \langle \psi, \bar{\nabla}_{\mathbf{n}}\psi\rangle dA_{\bar{g}} = \mathrm{Re} \sum_{i=1}^n \int_{S^M_\rho \times F} \langle \psi, \bar{\nabla}_{e_i}\psi\rangle e_i \lrcorner d\vol_{\bar{g}} \\
        &\qquad = \mathrm{Re} \sum_{i=1}^n \int_{S^M_\rho \times F} \left(\langle \psi_0, \bar{\nabla}_{e_i}\psi_0 \rangle - \langle \psi_0, \bar{\nabla}_{e_i}\xi\rangle - \langle \xi, \bar{\nabla}_{e_i}\psi_0 \rangle + \langle \xi, \bar{\nabla}_{e_i}\xi\rangle \right) e_i \lrcorner d\vol_{\bar{g}}, \label{eq:asdf}
    \end{align}
    where $(e_i)_{i=1}^n$ is a local orthonormal frame for $TM$. Since $M$ is AE and $F$ is closed, essentially the same arguments as in \cite[Appendix A]{lee1987yamabe} show that the first, third and fourth terms on the right vanish as $\rho \to \infty$. On the other hand, completing $(e_i)_{i=1}^n$ to a $\bar{g}$-orthonormal frame for $M \times F$, the second term is
    \begin{align}
        \mathrm{Re} \sum_{i=1}^n \int_{S^M_\rho \times F} \langle \psi_0, \bar{\nabla}_{e_i}\xi\rangle e_i \lrcorner d\vol_{\bar{g}} &= -\frac{1}{4} \sum_{i=1}^n \sum_{a=1}^{n+m} \int_{S^M_\rho \times F} (\del_a \bar{g}_{ai} - \del_i \bar{g}_{aa} + \O(\rho^{-2\tau-1}))|\psi_0|^2 e_i \lrcorner d\vol_{\bar{g}}.
    \end{align}
    Since $\bar{g}_{ai} \equiv 0$ whenever $i \leq n$ and $a > n$, this becomes
    \begin{align}
        \mathrm{Re}\sum_{i=1}^n \int_{S^M_\rho \times F} \langle \psi_0, \bar{\nabla}_{e_i}\xi\rangle e_i \lrcorner d\vol_{\bar{g}} &= -\frac{1}{4} \int_{S^M_\rho \times F} (\del_j \bar{g}_{ji} - \del_i \bar{g}_{aa} + \O(\rho^{-2\tau-1}))|\psi_0|^2 e_i \lrcorner d\vol_{\bar{g}}
    \end{align}
    where the indices $i,j$ run over the frame for $M$ and the index $a$ runs over the full frame for $M \times F$.
    Substituting this into \eqref{eq:asdf}, taking $\rho \to \infty$, and using that $|\psi_0|= 1$ outside a compact set, we get
    \begin{align} \label{eq:10581}
        \int_{M \times F} (|\bar{\nabla}\psi|^2 + \frac{1}{4}R^m_f|\psi|^2) d\vol_{\bar{g}} = \lim_{\rho\to\infty} \frac{1}{4} \int_{S^M_\rho \times F} (\del_i \bar{g}_{ij} - \del_j \bar{g}_{aa}) e_j \lrcorner d\vol_{\bar{g}} = \frac{1}{4}\mathfrak{m}(\mathcal{M}),
    \end{align}
    where the last equality is by Definition \ref{def:SMMS-mass}. From this we see that $\mathfrak{m}(\mathcal{M}) \geq 0$ if $R^m_f \geq 0$.

    Working in a split orthonormal frame $(\xi_1,\ldots,\xi_n,\zeta_1,\ldots,\zeta_m)$ centered at $(x_0,y_0) \in M \times F$, Theorem \ref{thm:warped-dirac} gives that at $(x_0,y_0)$,
    \begin{align}
        |\bar{\nabla}\psi|^2 &= \sum_{\alpha=1}^n |\bar{\nabla}_{\xi_\alpha}\psi|^2 + \sum_{i=1}^m |\bar{\nabla}_{\zeta_i}\psi|^2 \\
        &= \sum_{\alpha=1}^n |\nabla_{\xi_\alpha}\phi \otimes \nu|^2 + \sum_{i=1}^m \left| \phi \otimes \nabla_{\zeta_i}^Z \nu + \frac{1}{2m} \zeta_i \cdot \nabla f \cdot \psi \right|^2.
    \end{align}
    Since $\nabla_{\zeta_i}^Z \nu = e^{\frac{f}{m}} \nabla_{\zeta_i}^F \nu$ (see \eqref{eq:nabla-Z-2}), $\nu$ is a unit norm parallel spinor, and $\zeta_i$ has unit norm with respect to $\bar{g}$, it follows that
    \begin{align}
        |\bar{\nabla}\psi|^2 &= \sum_{\alpha=1}^n |\nabla_{\xi_\alpha}\phi \otimes \nu|^2 + \sum_{i=1}^m \left| \frac{1}{2m} \zeta_i \cdot \nabla f \cdot \psi \right|^2 \\
        &= \sum_{\alpha=1}^n |\nabla_{\xi_\alpha}\phi \otimes \nu|^2 + \frac{1}{4m} |\nabla f|^2 |\psi|^2 \\
        &= |\nabla\phi|^2 + \frac{1}{4m}|\nabla f|^2|\phi|^2. \label{eq:10958}
    \end{align}
    The last equality is by Proposition \ref{prop:warped-spinor-metric}, which also implies $|\psi| = |\phi||\nu| = |\phi|$.
    Now suppose $R^m_f \geq 0$ and $\mathfrak{m}(\mathcal{M}) = 0$. Then \eqref{eq:10581} implies $|\bar{\nabla}\psi|^2 = 0$, which in turn implies $\nabla f = 0$ and $\nabla\phi = 0$ by \eqref{eq:10958}. Thus $f$ is constant, but since $f \in \C^{2,\alpha}_{-\tau}(M)$, it is identically zero. Applying the Ricci identity \eqref{eq:weighted-Ricci} below to the $D_f$-harmonic spinor $\phi$ implies $(M,g)$ is Ricci flat. By the Bishop--Gromov comparison theorem, any Ricci flat AE manifold is exactly Euclidean, so $(M^n,g) \cong (\R^n,\delta_{ij})$.
\end{proof}

\subsection{Weighted spin geometry identities} \label{subsec:weighted-spin-ids}

In the last subsection, we used Theorem \ref{thm:warped-dirac} and an analog of Witten's proof of the unweighted mass formula \eqref{eq:WittenFormula} applied to the warped product $(M \times F, \bar{g})$, to get the formula \eqref{eq:10581} for the mass of $\mathcal{M}$. We can use this to give a new proof of the weighted Witten formula \eqref{eq:WeightedWittenFormula} of Baldauf and Ozuch. To see this, use \eqref{eq:10581}, \eqref{eq:10958} and the fact that $|\psi| = |\phi|$ to get
\begin{align}
    \frac{1}{4}\mathfrak{m}(\mathcal{M}) &= \int_{M \times F} \left(|\nabla \phi|^2 + \frac{1}{4m}|\nabla f|^2|\phi|^2 + \frac{1}{4} R^m_f|\phi|^2 \right) d\vol_{\bar{g}} \\
    &= \int_{M \times F} \left(|\nabla\phi|^2 + \frac{1}{4}R_f|\phi|^2\right) d\vol_{\bar{g}}. \label{eq:WittenFormulaSMMS}
\end{align}
Since $(F^m,h)$ has unit volume, $d\vol_{\bar{g}} = e^{-f} d\vol_g \, d\vol_h$, and $\mathfrak{m}(\mathcal{M}) = \mathfrak{m}_f(g)$, it follows that
\begin{equation}
    \mathfrak{m}_f(g) = \mathfrak{m}(\mathcal{M}) = 4\int_{M \times F}\left( |\nabla \phi|^2 + \frac{1}{4}R_f|\phi|^2 \right) e^{-f} d\vol_g \, d\vol_h = 4\int_M \left( |\nabla\phi|^2 + \frac{1}{4}R_f|\phi|^2 \right)e^{-f} d\vol_g.
\end{equation}
Recall that $\phi \in \Gamma(\Sigma M)$ is a weighted Witten spinor, so the above formula is indeed \eqref{eq:WeightedWittenFormula}.

The same outline allows us to reprove the \emph{weighted Lichnerowicz formula} and the \emph{weighted Ricci identity}. This is done in the next two propositions.

\begin{proposition}[Weighted Lichnerowicz \cite{baldauf2022spinors,branding2022eigenvalue}] \label{prop:Lichnerowicz Formula}
    Let $(M^n,g,f)$ be a weighted spin manifold. Then
    \begin{equation} \label{eq:weighted-Lich}
        D_f^2 \phi = -\Delta_f \phi + \frac{1}{4}R_f \phi
    \end{equation}
    for all $\phi \in \Gamma(\Sigma M)$, where $\Delta_f = -\nabla^*\nabla - \nabla_{\nabla f}$ is the weighted Laplacian acting on spinors.
\end{proposition}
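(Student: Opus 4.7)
The approach is to reduce to the ordinary Lichnerowicz formula \eqref{eq:Lich} on the warped product $(M \times F, \bar{g})$ via Theorem \ref{thm:warped-dirac}, in the spirit of \S\ref{subsec:spin-proof-PMT}. Choose $(F^m,h)$ to be a closed scalar-flat spin manifold of unit volume carrying a unit-norm parallel spinor $\nu$, with $m$ even (e.g., a flat 2-torus), so that we land in one of the first two cases of \eqref{eq:Dirac0}. For $\phi \in \Gamma(\Sigma M)$, set $\psi = \pi_1^*\phi \otimes \pi_2^*\nu \in \Gamma(\bar{\Sigma}(M \times F))$. Since $(M\times F,\bar{g})$ has scalar curvature $R^m_f$, the standard Lichnerowicz formula gives
\begin{equation*}
    \bar{D}^2 \psi = \bar{\nabla}^*\bar{\nabla}\psi + \tfrac{1}{4}R^m_f\,\psi.
\end{equation*}
Applying Theorem \ref{thm:warped-dirac} twice, using $D^F\nu = 0$ and (in the $n$ odd subcase) that $\bar{\nu}$ is also parallel, yields $\bar{D}^2\psi = (D_f^2\phi)\otimes\nu$. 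The task therefore reduces to computing $\bar{\nabla}^*\bar{\nabla}\psi$.

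I would work at a point $(x_0,y_0)$ in a split orthonormal frame $(\xi_\alpha,\zeta_i)$ and exploit the identities \eqref{eq:connection-identities}. The horizontal contribution is immediate from $\bar{\nabla}_X\psi = (\nabla_X\phi)\otimes\nu$: at the center it equals $\bigl(-\sum_\alpha \nabla_{\xi_\alpha}\nabla_{\xi_\alpha}\phi\bigr)\otimes\nu = (-\Delta\phi)\otimes\nu$. The connection correction arises from $\bar{\nabla}_{\zeta_i}\zeta_i = \frac{1}{m}\nabla f$ and contributes $\sum_i \bar{\nabla}_{\bar{\nabla}_{\zeta_i}\zeta_i}\psi = \bar{\nabla}_{\nabla f}\psi = (\nabla_{\nabla f}\phi)\otimes\nu$. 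The vertical second derivative is the main calculation: starting from $\bar{\nabla}_{\zeta_i}\psi = \frac{1}{2m}\zeta_i\cdot\nabla f\cdot\psi$, one computes $\bar{\nabla}_{\zeta_i}(\nabla f) = -\frac{1}{m}|\nabla f|^2 \zeta_i$ at the center via $[\zeta_i,\xi_\alpha] = -\frac{1}{m}(\xi_\alpha f)\zeta_i$, and then applies the Clifford identities $\zeta_i\cdot\zeta_i = -1$, $\nabla f\cdot\nabla f = -|\nabla f|^2$, and $\zeta_i\cdot\nabla f = -\nabla f\cdot\zeta_i$ (anticommutation of vertical and horizontal vectors) to conclude $\sum_i \bar{\nabla}_{\zeta_i}\bar{\nabla}_{\zeta_i}\psi = -\frac{1}{4m}|\nabla f|^2\psi$.

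Assembling the pieces and using $\Delta_f = \Delta - \nabla_{\nabla f}$ on spinors gives $\bar{\nabla}^*\bar{\nabla}\psi = \bigl(-\Delta_f\phi + \tfrac{1}{4m}|\nabla f|^2\phi\bigr)\otimes\nu$. Substituting into the warped-product Lichnerowicz formula and expanding $R^m_f = R_f - \frac{1}{m}|\nabla f|^2$, the two $\tfrac{1}{4m}|\nabla f|^2$ terms cancel to leave $D_f^2\phi = -\Delta_f\phi + \tfrac{1}{4}R_f\phi$. The main obstacle is the Clifford-algebraic bookkeeping in the vertical second derivative; once handled, the cancellation between the $\frac{1}{4m}|\nabla f|^2$ term produced by $\bar{\nabla}^*\bar{\nabla}\psi$ and the $-\frac{1}{m}|\nabla f|^2$ buried in $R^m_f$ is what renders the fiber dimension $m$ invisible in the final formula, consistent with the $m$-independence observed in Theorem \ref{thm:dirac-op-intro}.
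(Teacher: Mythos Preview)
Your proposal is correct and follows essentially the same route as the paper: both apply the ordinary Lichnerowicz formula to $\psi=\phi\otimes\nu$ on the warped product, compute $\bar{D}^2\psi=(D_f^2\phi)\otimes\nu$ via Theorem~\ref{thm:warped-dirac}, evaluate the rough Laplacian in a split orthonormal frame using \eqref{eq:connection-identities}, and observe the $\tfrac{1}{4m}|\nabla f|^2$ cancellation against the $-\tfrac{1}{m}|\nabla f|^2$ in $R^m_f$. Your choice to fix $m$ even (avoiding the ${}^\diamond\mathbf{\Sigma}M=\Sigma M\oplus\Sigma M$ case) and your explicit handling of $\bar{\nabla}_{\zeta_i}(\nabla f)$ are minor expository differences, not substantive ones.
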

\begin{proof}
    Form $(M \times F, \bar{g})$ as before, and let $\nu \in \Gamma(\Sigma F)$ be a nonzero parallel spinor. 
    Applying the Lichnerowicz formula \eqref{eq:Lich} to the spinor $\psi = \phi \otimes \nu \in \Gamma(\bar{\Sigma}(M \times F))$, and recalling that $\bar{g}$ has scalar curvature $R^m_f$, we get
    \begin{align} \label{eq:lich-warped}
        \bar{D}^2 \psi &= -\bar{\Delta}\psi + \frac{1}{4} R^m_f \psi.
    \end{align}
    Take a split orthonormal frame $(\xi_1,\ldots,\xi_n,\zeta_1,\ldots,\zeta_m)$ for $(T(M \times F),\bar{g})$ centered at $(x_0,y_0) \in M \times F$. Computing at $(x_0,y_0)$, using the identities \eqref{eq:connection-identities}, \eqref{eq:connection-identities1} which hold there, and using Theorem \ref{thm:warped-dirac}, we have
    \begin{align}
        \bar{\Delta}\psi &= \sum_{\alpha=1}^n (\bar{\nabla}_{\xi_\alpha} \bar{\nabla}_{\xi_\alpha} \psi - \bar{\nabla}_{\bar{\nabla}_{\xi_\alpha} \xi_\alpha} \psi) + \sum_{i=1}^m (\bar{\nabla}_{\zeta_i} \bar{\nabla}_{\zeta_i} \psi - \bar{\nabla}_{\bar{\nabla}_{\zeta_i} \zeta_i} \psi) \\
        &= \sum_{\alpha=1}^n \nabla_{\xi_\alpha}\nabla_{\xi_\alpha}\phi \otimes \nu + \sum_{i=1}^m \left(\frac{1}{4m^2}\zeta_i \cdot \nabla f \cdot \zeta_i \cdot \nabla f \cdot \psi - \frac{1}{m}\bar{\nabla}_{\nabla f} \psi \right) \\
        &= \Delta\phi \otimes \nu - \frac{1}{4m}|\nabla f|^2 \psi - \nabla_{\nabla f}\phi \otimes \nu \\
        &= \Delta_f \phi \otimes \nu - \frac{1}{4m}|\nabla f|^2\psi.
    \end{align}
    On the other hand, Theorem \ref{thm:warped-dirac} also gives $\bar{D}^2\psi = D_f^2\phi \otimes \nu$. Thus \eqref{eq:lich-warped} becomes
    \begin{align}
        (D_f^2\phi \otimes \nu) &= -\Delta_f \phi \otimes \nu + \frac{1}{4m}|\nabla f|^2\psi + \frac{1}{4}\left( R_f-\frac{1}{m}|\nabla f|^2 \right)\psi \\
        &= \left(-\Delta_f \phi + \frac{1}{4}R_f \phi\right) \otimes \nu.
    \end{align}
    The proposition follows from this since $\nu \neq 0$ is parallel and hence nonvanishing.
\end{proof}

\begin{proposition}[Weighted Ricci \cite{baldauf2022spinors}] \label{prop:Ricci-Identity}
    Let $(M^n,g,f)$ be a weighted spin manifold. Then
    \begin{align}
        [D_f,\nabla_X]\phi &= \frac{1}{2}\Ric_f(X) \cdot \phi \label{eq:weighted-Ricci}
    \end{align}
    for all $X \in TM$ and $\phi \in \Gamma(\Sigma M)$.
\end{proposition}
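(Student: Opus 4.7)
The plan is to follow the warped-product strategy used for the weighted Lichnerowicz formula (Proposition \ref{prop:Lichnerowicz Formula}): form $(M \times F, \bar{g})$ with $F$ admitting a parallel unit norm spinor $\nu$, set $\psi = \phi \otimes \nu$, and apply the classical (unweighted) Ricci identity on $(M \times F, \bar{g})$ to $\psi$ in the direction $\bar{X} = \pi_1^*X$ for $X \in TM$. Taking $n$ even for simplicity and using Theorem \ref{thm:warped-dirac} together with $\nabla^F\nu = 0$, one has $\bar{D}\psi = D_f\phi \otimes \nu$ and $\bar{\nabla}_{\bar{X}}\psi = \nabla_X\phi \otimes \nu$ globally, hence $[\bar{D}, \bar{\nabla}_{\bar{X}}]\psi = [D_f, \nabla_X]\phi \otimes \nu$.

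For the right-hand side, I would work at a point $p = (x_0, y_0)$ in a split orthonormal frame (Definition \ref{def:split-on-frame}) and extend $X$ on $M$ so that $\nabla X|_{x_0} = 0$. A short computation using \eqref{eq:connection-identities} together with $[\zeta_i, \xi_\alpha] = -\frac{1}{m}\xi_\alpha(f)\zeta_i$ then gives $\bar{\nabla}_{\xi_\alpha}\bar{X} = 0$ and $\bar{\nabla}_{\zeta_i}\bar{X} = -\frac{1}{m}\langle\nabla f, X\rangle \zeta_i$ at $p$, while $\bar{\nabla}_{\bar{X}}\bar{e}_a = 0$ at $p$ by $C^\infty$-linearity. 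Plugging these into the general Ricci identity
\begin{equation*}
[\bar{D}, \bar{\nabla}_{\bar{X}}]\psi\bigr|_p = \tfrac{1}{2}\Ric_{\bar{g}}(\bar{X})\cdot \psi\bigr|_p + \sum_a \bar{e}_a \cdot \bar{\nabla}_{\bar{\nabla}_{\bar{e}_a}\bar{X}}\psi\bigr|_p,
\end{equation*}
and using $\bar{\nabla}_{\zeta_i}\psi = \frac{1}{2m}\zeta_i\cdot\nabla f\cdot\psi$ (from Theorem \ref{thm:warped-dirac}) together with $\sum_i \zeta_i\cdot\zeta_i = -m$, the correction sum simplifies to $\frac{1}{2m}\langle\nabla f, X\rangle \nabla f \cdot \psi$.

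Combining this with $\Ric_{\bar{g}}(\bar{X}) = \Ric^m_f(X)$ viewed as a vector in $TM$ (by \cite[Proposition 9.106]{besse2007einstein}, since the mixed Ricci of a warped product vanishes) and the algebraic identity $\Ric^m_f + \frac{1}{m}df\otimes df = \Ric_f$, one obtains $[D_f, \nabla_X]\phi \otimes \nu = \tfrac{1}{2}\Ric_f(X)\cdot \phi \otimes \nu$, and cancelling the nonvanishing parallel spinor $\nu$ yields \eqref{eq:weighted-Ricci}; the other parity cases for $n$ and $m$ adapt via the Clifford multiplication rules in Proposition \ref{prop:spinor-id}. The main technical point is the bookkeeping of the correction term in the Ricci identity arising from the fact that $\bar{X} = \pi_1^*X$ fails to be $\bar{\nabla}$-parallel at $p$ (even when $X$ itself is $\nabla$-parallel at $x_0$), due to the warped product connection mixing horizontal and vertical directions; remarkably, this correction exactly offsets the $-\tfrac{1}{m}df\otimes df$ piece in $\Ric^m_f$ to produce the full Bakry--\'Emery Ricci tensor $\Ric_f$. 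Alternatively, one can bypass the warped product entirely by writing $D_f = D - \tfrac{1}{2}\nabla f \cdot$ and combining the classical Ricci identity $[D, \nabla_X]\phi = \tfrac{1}{2}\Ric(X)\cdot\phi$ with the elementary commutator $[\nabla f \cdot, \nabla_X]\phi = -\hess_f(X)\cdot \phi$.
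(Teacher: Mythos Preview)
Your proposal is correct and follows essentially the same warped-product strategy as the paper: form $(M\times F,\bar g)$, set $\psi=\phi\otimes\nu$, apply the classical Ricci identity on the warped product in a split orthonormal frame, identify $\Ric_{\bar g}(X)=\Ric^m_f(X)$, and check that the extra term coming from the warped connection exactly supplies the $\tfrac{1}{m}df\otimes df$ needed to turn $\Ric^m_f$ into $\Ric_f$. The only organizational difference is that the paper starts from the curvature form $-\tfrac{1}{2}\Ric_{\bar g}(X)\cdot=\sum_a e_a\cdot\bar{\mathcal R}_{X,e_a}$ and expands the curvature operators to \emph{produce} the correction $-\tfrac{1}{m}(Xf)\sum_i\zeta_i\cdot\bar\nabla_{\zeta_i}$, whereas you write the commutator formula $[\bar D,\bar\nabla_{\bar X}]=\tfrac{1}{2}\Ric_{\bar g}(\bar X)\cdot+\sum_a e_a\cdot\bar\nabla_{\bar\nabla_{e_a}\bar X}$ with the correction already isolated and then compute $\bar\nabla_{\zeta_i}\bar X=-\tfrac{1}{m}\langle\nabla f,X\rangle\zeta_i$; these are the same computation in slightly different bookkeeping. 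Your final one-line alternative (combine the unweighted Ricci identity $[D,\nabla_X]=\tfrac{1}{2}\Ric(X)\cdot$ with $[\nabla f\cdot,\nabla_X]=-\hess_f(X)\cdot$) is not in the paper and is of course the most direct route, though it is outside the paper's stated aim of deriving weighted identities \emph{from} the warped product.
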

\begin{proof}
    Reuse the setup in the proof of Proposition \ref{prop:Lichnerowicz Formula}.
    The usual Ricci identity (see e.g. \cite[Corollary 2.8]{bourguignon2015spinorial}) applied on $\bar{\Sigma}(M \times F)$ yields
    \begin{align} \label{eq:ricci-id1}
        -\frac{1}{2}\Ric_{\bar{g}}(X) \cdot &= \sum_{\alpha=1}^n \xi_\alpha \cdot \bar{\mathcal{R}}_{X,\xi_\alpha} + \sum_{i=1}^m \zeta_i \cdot \bar{\mathcal{R}}_{X,\zeta_i},
    \end{align}
    where $\bar{\mathcal{R}}_{Y,Z} := \bar{\nabla}_Y\bar{\nabla}_Z - \bar{\nabla}_Z\bar{\nabla}_Y - \bar{\nabla}_{[Y,Z]}$.
    We claim that $\Ric_{\bar{g}}(X) = \Ric^m_f(X)$. Indeed, $\Ric_{M \times F}(X)$ is a horizontal vector because $\Ric_{M \times F}(X,Y) = 0$ for all vertical vectors; see e.g. \cite[Proposition 9.106]{besse2007einstein}.Using that $g = \bar{g}$ and $\Ric_{\bar{g}} = \Ric^m_f$ on horizontal vectors, as well as the definitions of $\Ric_{\bar{g}}, \Ric^m_f$, we see that
    \begin{equation}
        g(\Ric_{\bar{g}}(X),X') = \bar{g}(\Ric_{\bar{g}}(X),X') = \Ric_{\bar{g}}(X,X') = \Ric^m_f(X,X') = g(\Ric^m_f(X),X')
    \end{equation}
    for all horizontal vectors $X' \in TM$. Thus $\Ric_{\bar{g}}(X) = \Ric^m_f(X)$.
    
    Since $(\xi_1,\ldots,\xi_n)$ coincide with normal coordinate vector fields for $M$ at $x_0$, we compute at that point
    \begin{align}
        \sum_{\alpha=1}^n \xi_\alpha \cdot \bar{\mathcal{R}}_{X,\xi_\alpha} + \sum_{i=1}^m \zeta_i \cdot \bar{\mathcal{R}}_{X,\zeta_i} &= \sum_{\alpha=1}^n \xi_\alpha \cdot (\bar{\nabla}_X \bar{\nabla}_{\xi_\alpha} - \bar{\nabla}_{\xi_\alpha} \bar{\nabla}_X) + \sum_{i=1}^m \zeta_i \cdot (\bar{\nabla}_X \bar{\nabla}_{\zeta_i} - \bar{\nabla}_{\zeta_i} \bar{\nabla}_X - \bar{\nabla}_{[X,\zeta_i]}) \\
        &= \sum_{\alpha=1}^n (\bar{\nabla}_X(\xi_\alpha \cdot \bar{\nabla}_{\xi_\alpha}) - \xi_\alpha \cdot \bar{\nabla}_{\xi_\alpha} \bar{\nabla}_X - \bar{\nabla}_X \xi_\alpha \cdot \bar{\nabla}_{\xi_\alpha}) \\
        &\quad + \sum_{i=1}^m (\bar{\nabla}_X(\zeta_i \cdot \bar{\nabla}_{\zeta_i}) - \zeta_i \cdot \bar{\nabla}_{\zeta_i} \bar{\nabla}_X - \bar{\nabla}_X\zeta_i \cdot \bar{\nabla}_{\zeta_i} - \zeta_i \cdot \bar{\nabla}_{[X,\zeta_i]}).
    \end{align}
    By \eqref{eq:connection-identities}, we have $\bar{\nabla}_X \xi_\alpha = \bar{\nabla}_X \zeta_i = 0$ and $[X,\zeta_i] = \bar{\nabla}_X\zeta_i - \bar{\nabla}_{\zeta_i}X = \frac{1}{m}(Xf)\zeta_i$. Thus
    \begin{align}
        \sum_{\alpha=1}^n \xi_\alpha \cdot \bar{\mathcal{R}}_{X,\xi_\alpha} + \sum_{i=1}^m \zeta_i \cdot \bar{\mathcal{R}}_{X,\zeta_i} &= \bar{\nabla}_X\bar{D} - \bar{D}\bar{\nabla}_X - \frac{1}{m} (Xf) \sum_{i=1}^m \zeta_i \cdot \bar{\nabla}_{\zeta_i}.
    \end{align}
    Applying this to $\psi = \phi \otimes \nu$ on both sides, then using \eqref{eq:ricci-id1} on the left and Theorem \ref{thm:warped-dirac} on the right, we get
    \begin{align}
        -\frac{1}{2}\Ric_{\bar{g}}(X) \cdot \psi &= -[D_f,\nabla_X]\phi \otimes \nu - \frac{1}{m}(Xf) \sum_{i=1}^m \zeta_i \cdot \left( \frac{1}{2m} \zeta_i \cdot \nabla f \cdot \psi \right) \\
        &= -[D_f,\nabla_X]\phi \otimes \nu + \frac{1}{2m}(Xf)\nabla f \cdot \psi.
    \end{align}
    Since $\Ric_{\bar{g}}(X) = \Ric^m_f(X)$, this gives
    \begin{align}
        [D_f,\nabla_X]\phi \otimes \nu &= \left[\frac{1}{2}\left(\Ric^m_f(X) - \frac{1}{m}(df \otimes df)(X) \right) \cdot \phi \right] \otimes \nu = \left(\frac{1}{2}\Ric_f(X) \cdot \phi\right) \otimes \nu
    \end{align}
    and the proposition follows.
\end{proof}

\subsection{Dirac spectra of closed manifolds}

Now assume additionally that $(M^n,g)$ is compact without boundary. We will see what our earlier results imply for the spectrum of the Dirac operator on $M$. Relationships between the Dirac spectra of $M$ and of fibrations over $M$ were previously studied in \cite{ammann1998dirac,lott2002collapsing,roos2020dirac} (and references therein), although their settings are rather different from ours.

An SMMS $(M^n,g,e^{-f}d\vol_g,m)$ is said to be \emph{quasi-Einstein} if $\Ric^m_f = \lambda g$ for some $\lambda \in \R$ \cite{case2012smooth}. Our first result here characterizes this property in terms of the warped product Dirac operator $\bar{D}$. Denote by $\lambda_1(\bar{D})$ the least eigenvalue of $\bar{D}$ in absolute value.
\begin{proposition} \label{prop:quasi-Einstein}
    We have $\lambda_1(\bar{D})^2 \geq \frac{n+m}{4(n+m-1)}\min R^m_f$ for all $m \in \N$. Equality holds for $m \in \N$ if and only if the SMMS $(M^n,g,e^{-f}d\vol_g,m)$ is quasi-Einstein.
\end{proposition}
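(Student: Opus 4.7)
Since $M$ and $F$ are both closed, the warped product $(M\times F, \bar g)$ is a closed spin manifold of dimension $n+m$, and the discussion in \S\ref{sec:intro} (via \cite[Proposition~9.106]{besse2007einstein}) identifies its scalar curvature as $R^m_f$. The inequality $\lambda_1(\bar D)^2 \geq \frac{n+m}{4(n+m-1)}\min R^m_f$ is therefore nothing but Friedrich's classical eigenvalue inequality applied directly to $(M\times F, \bar g)$, and the first assertion is immediate.

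For the rigidity in one direction, I would appeal to Friedrich's rigidity theorem: equality forces the existence of a real Killing spinor on $(M\times F, \bar g)$, which in turn forces $\bar g$ to be Einstein with constant scalar curvature, say $\Ric_{\bar g} = \lambda \bar g$. Restricting this identity to horizontal vectors $X, Y \in TM$ and using the identification $\Ric_{\bar g}(X,Y) = \Ric^m_f(X,Y)$ from the introduction, one recovers $\Ric^m_f = \lambda g$ on $M$, so the SMMS is quasi-Einstein.

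For the converse, starting from $\Ric^m_f = \lambda g$, one would first verify that the warped product itself is Einstein. The vertical part of Besse's Ricci formula for warped products gives $\Ric_{\bar g}(V,W) = \Ric_h(V,W) - \frac{1}{m}(|\nabla f|^2 - \Delta f)\bar g(V,W)$ for $V,W \in TF$; since $F$ carries a parallel spinor one has $\Ric_h = 0$, so Einsteinness in the vertical directions reduces to the scalar identity $\Delta f - |\nabla f|^2 = m\lambda$. This identity comes out of contracting the second Bianchi identity for $\Ric^m_f$ under the quasi-Einstein hypothesis (computing $\mathrm{div}\,\Ric^m_f$ and setting it equal to the divergence of $\lambda g$, namely zero). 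With the warped product thus known to be Einstein, I would construct a real Killing spinor of the form $\phi \otimes \nu$ -- where $\nu$ is the given parallel spinor on $F$ and $\phi$ is a suitable weighted Killing-type spinor on $M$ -- by translating the warped-product Killing spinor equation into an equation on $M$ via Theorem \ref{thm:warped-dirac}, and solving the latter using the quasi-Einstein structure.

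The forward direction (equality $\Rightarrow$ quasi-Einstein) is essentially a direct corollary of Friedrich together with the Besse identity. The main difficulty will be the converse: showing that the quasi-Einstein data on $M$ suffices to produce a Killing spinor on the warped product. One has to verify the scalar equation $\Delta f - |\nabla f|^2 = m\lambda$ to obtain an Einstein warped product, and then solve a weighted Killing spinor equation on $M$, using Theorem \ref{thm:warped-dirac} to translate between spin-geometric objects on $M$ and on $M\times F$.
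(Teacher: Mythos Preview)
Your argument for the inequality and for the forward rigidity direction (equality $\Rightarrow$ quasi-Einstein) is exactly what the paper does: apply Friedrich's inequality on the $(n+m)$-dimensional closed spin manifold $(M\times F,\bar g)$, whose scalar curvature is $R^m_f$, and then restrict the resulting Einstein condition to horizontal vectors using $\Ric_{\bar g}(X,X')=\Ric^m_f(X,X')$.

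Where you diverge from the paper is the converse. The paper's proof is far terser than your program: it simply asserts that equality in Friedrich's inequality holds if and only if $(M\times F,\bar g)$ is Einstein (citing \cite[Theorem~5.3]{bourguignon2015spinorial}), and that this in turn is equivalent to the SMMS being quasi-Einstein, again via the horizontal identity $\Ric_{\bar g}(X,X')=\Ric^m_f(X,X')$. There is no separate verification of the vertical Einstein condition, no Bianchi computation, and no construction of a Killing spinor on the warped product.

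Your more careful treatment of the converse is in a sense well-motivated: the standard rigidity statement for Friedrich's inequality is ``equality $\Leftrightarrow$ existence of a real Killing spinor,'' which only \emph{implies} Einstein; an arbitrary Einstein manifold need not realize equality. So the step you flag as the ``main difficulty'' is a genuine one, and your sketch does not close it either --- producing a Killing spinor on $M\times F$ from quasi-Einstein data on $M$ is a real existence problem that you have not solved, only reformulated. In short: on the converse direction the paper is brief to the point of glossing over the issue, while your proposal correctly identifies the gap but leaves it open.
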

\begin{proof}
    Since $\bar{g}$ has scalar curvature $R^m_f$,
    Friedrich's inequality \cite{friedrich1980eigen} (also \cite[Theorem 5.3]{bourguignon2015spinorial}) applied to the warped product says that $\lambda_1(\bar{D})^2 \geq \frac{n+m}{4(n+m-1)}\min R^m_f$, with equality if and only if $(M \times F, \bar{g})$ is Einstein. The latter holds if and only if $(M^n,g,e^{-f}d\vol_g,m)$ is quasi-Einstein, since for horizontal vectors $X,X' \in TM$ we have
    \begin{equation}
        \Ric_{\bar{g}}(X,X') = \Ric^m_f(X,X').
    \end{equation}
\end{proof}

If $D\phi = \lambda\phi$ for some $\phi \in \Gamma(\Sigma M)$, then $\bar{D}(\phi \otimes \nu) = \lambda\phi \otimes \nu$ by Theorem \ref{thm:warped-dirac}, where $\nu \in \Gamma(\Sigma F)$ is a nontrivial parallel spinor. Thus the spectrum of $D$ is contained in the spectrum of $\bar{D}$, and Proposition \ref{prop:quasi-Einstein} implies $\lambda_1(D)^2 \geq \frac{n+m}{4(n+m-1)}\min R^m_f$ for all $m \in \N$. This bound is similar to, but weaker than the weighted Friedrich inequality $\lambda_1(D)^2 \geq \frac{n}{4(n-1)}\min R_f$ \cite[Theorem 1.23]{baldauf2022spinors}. Equality in the latter inequality holds if and only if $f$ is constant, and $(M^n,g)$ admits a nontrivial Killing spinor and is Einstein. The next corollary uses this to give a similar characterization of equality in our bound.

\begin{corollary}
    We have $\lambda_1(D)^2 \geq \frac{n+m}{4(n+m-1)} \min R^m_f$ for all $m \in \N$ and $f \in \C^\infty(M)$. Equality holds for some $m$ if and only if $f$ is constant and $M$ admits a nontrivial parallel spinor (hence is Ricci flat).
\end{corollary}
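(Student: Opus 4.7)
The inequality itself was established in the discussion preceding the corollary via Proposition~\ref{prop:quasi-Einstein} and the spectrum inclusion argument, so the main task is to prove the rigidity claim. My plan is to combine that inequality with the weighted Friedrich inequality $\lambda_1(D)^2 \geq \frac{n}{4(n-1)}\min R_f$ of \cite[Theorem 1.23]{baldauf2022spinors}, whose equality case (quoted in the paragraph before the corollary) forces $f$ to be constant and $(M,g)$ to be Einstein with a nontrivial Killing spinor. The point is that our new bound is \emph{weaker} than weighted Friedrich, so equality in it should be quite restrictive.

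For the forward direction, suppose equality $\lambda_1(D)^2 = \frac{n+m}{4(n+m-1)}\min R^m_f$ holds for some $m \in \N$. Since $\lambda_1(D)^2 \geq 0$, this immediately gives $\min R^m_f \geq 0$. Using $R^m_f = R_f - \frac{1}{m}|\nabla f|^2 \leq R_f$ pointwise, I also get $\min R_f \geq \min R^m_f \geq 0$. Chaining with the weighted Friedrich inequality then yields
\begin{equation}
    \frac{n+m}{4(n+m-1)}\min R^m_f = \lambda_1(D)^2 \geq \frac{n}{4(n-1)}\min R_f \geq \frac{n}{4(n-1)}\min R^m_f.
\end{equation}
Since $\frac{n}{4(n-1)} > \frac{n+m}{4(n+m-1)}$ for $m > 0$ and $\min R^m_f \geq 0$, the outer comparison forces $\min R^m_f = 0$, whence $\min R_f = 0$ and $\lambda_1(D) = 0$. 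In particular the weighted Friedrich inequality is saturated, so its rigidity gives that $f$ is constant and $(M,g)$ admits a nontrivial Killing spinor $\phi$. Because $\lambda_1(D) = 0$, the Killing relation reduces to $\nabla_X \phi = -\frac{\lambda_1(D)}{n}X \cdot \phi = 0$, so $\phi$ is parallel (which forces Ricci flatness in the standard way).

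The reverse direction is immediate: if $f$ is constant and $\phi$ is a nontrivial parallel spinor on $M$, then $D\phi = 0$ so $\lambda_1(D) = 0$; existence of a parallel spinor forces Ricci flatness, so $R = 0$, and with $f$ constant one has $R^m_f = R = 0$ identically. Both sides of the asserted equality then vanish, for every $m$.

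The primary obstacle I anticipate is recognizing that the equality case necessarily drives $\min R^m_f$ all the way down to $0$ (rather than being attained for some positive value), which is what lets us reduce to the weighted Friedrich equality. Once that observation is in hand, upgrading a Killing spinor with zero Dirac eigenvalue to a parallel spinor and reading off Ricci-flatness are routine.
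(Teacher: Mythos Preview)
Your proof is correct and follows essentially the same approach as the paper: both sandwich the weighted Friedrich inequality between the assumed equality and the pointwise bound $R_f \geq R^m_f$, then use the strict inequality $\frac{n}{4(n-1)} > \frac{n+m}{4(n+m-1)}$ to force $\min R^m_f = 0$ and $\lambda_1(D) = 0$, after which the Killing spinor from the weighted Friedrich rigidity becomes parallel. Your version is in fact slightly more explicit in first noting $\min R^m_f \geq 0$ from $\lambda_1(D)^2 \geq 0$, which is the ingredient needed to make the coefficient comparison go through.
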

\begin{proof}
    Argue as in the last paragraph to get the claimed bound on $\lambda_1(D)^2$. Now suppose equality holds in the bound for some $m \in \N$. By the weighted Friedrich inequality, we have
    \begin{equation}
        \lambda_1(D)^2 \geq \frac{n}{4(n-1)}\min R_f \geq \frac{n+m}{4(n+m-1)} \min R^m_f,
    \end{equation}
    but the equality assumption turns both inequalities above into equalities. The first equality implies $f$ is constant and $M$ is Einstein, so $R^m_f = R_f = R$. This, the second equality, and the fact that $\frac{n}{4(n-1)} > \frac{n+m}{4(n+m-1)}$ imply that $\min R_f \leq 0$. The first equality in turn forces $\lambda_1(D) = \min R_f = 0$, so $M$ admits a nontrivial parallel spinor. Finally, a spin manifold admitting a nontrivial parallel spinor must be Ricci-flat (see e.g. \cite[\S 3.2]{friedrich2000dirac}).

    Conversely, if $f$ is constant and $M$ admits a nontrivial parallel spinor, then $\lambda_1(D)^2 = 0$. On the other hand $(M^n,g)$ is Ricci-flat, hence $R^m_f = R = 0$. Thus $\lambda_1(D)^2 = \frac{n+m}{4(n+m-1)}\min R^m_f$.
\end{proof}

Specializing our discussion to harmonic spinors, first note that the weighted Lichnerowicz formula
$D_f^2 = \nabla^*\nabla + \frac{1}{4}R_f$
implies the following, since $R_f \geq R^m_f$ for $m > 0$:
\begin{corollary} \label{cor:vanishing-easy}
    If there exist $m >0 $ and $f \in \C^\infty(M)$ with $R^m_f \geq 0$ and $R^m_f > 0$ at some point, then $M$ admits no nontrivial harmonic spinors.
\end{corollary}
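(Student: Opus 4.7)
The plan is to reduce the problem to a weighted Bochner-type vanishing via the intertwining relation $D_f(e^{f/2}\phi) = e^{f/2}D\phi$. This identity follows at once from the Clifford Leibniz rule $D(u\phi) = \nabla u \cdot \phi + uD\phi$ applied with $u = e^{f/2}$, together with $D_f = D - \tfrac{1}{2}\nabla f \cdot$: the two resulting copies of $\tfrac{1}{2}e^{f/2}\nabla f\cdot\phi$ cancel. Consequently, multiplication by $e^{f/2}$ gives a bijection $\ker D \to \ker D_f$, so it suffices to prove $\ker D_f = 0$ under the stated hypotheses.

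To achieve this, I would first observe that $m > 0$ and \eqref{eq:m-weighted-scalar} yield $R_f = R^m_f + \tfrac{1}{m}|\nabla f|^2 \geq R^m_f$, so the assumptions on $R^m_f$ transfer to $R_f \geq 0$ everywhere with $R_f > 0$ on some nonempty open set. Next, for $\psi \in \ker D_f$, I would pair the weighted Lichnerowicz formula (Proposition \ref{prop:Lichnerowicz Formula}) with $\psi$ in the weighted $L^2$-inner product $(\cdot,\cdot)_f := \int_M \langle \cdot,\cdot\rangle e^{-f}d\vol_g$. Since $D_f$ is formally self-adjoint for $(\cdot,\cdot)_f$ on the closed manifold $M$, the left-hand side collapses to $\|D_f\psi\|_f^2 = 0$, while a weighted Bochner identity $\int_M \langle -\Delta_f\psi,\psi\rangle e^{-f}d\vol_g = \int_M |\nabla\psi|^2 e^{-f}d\vol_g$ reduces the right-hand side to
\begin{equation}
    0 = \int_M |\nabla\psi|^2 e^{-f}d\vol_g + \frac{1}{4}\int_M R_f|\psi|^2 e^{-f}d\vol_g.
\end{equation}
Both integrands are nonnegative, forcing $\nabla\psi \equiv 0$ and $\psi \equiv 0$ where $R_f > 0$; then parallelness on connected $M$ gives $\psi \equiv 0$, and hence $\phi \equiv 0$.

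The one step demanding attention is the weighted Bochner identity. Its derivation rests on the observation that the $\nabla_{\nabla f}$ term in $-\Delta_f = \nabla^*\nabla + \nabla_{\nabla f}$ is exactly what cancels the extra term produced when $\nabla^*\nabla\psi$ is integrated against the weighted measure $e^{-f}d\vol_g$, which in turn relies on the weighted Stokes identity $\int_M \mathrm{div}(X)\, e^{-f}d\vol_g = \int_M \langle \nabla f, X\rangle e^{-f}d\vol_g$ valid on closed $M$. Beyond this routine bookkeeping, no serious obstacle arises: the argument is essentially the weighted-measure analog of the classical Lichnerowicz vanishing theorem.
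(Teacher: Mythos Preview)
Your proof is correct and follows essentially the same route as the paper: use $R_f \geq R^m_f$ for $m>0$ together with the weighted Lichnerowicz formula to obtain $\ker D_f = 0$, then transfer to $\ker D = 0$. The only difference is cosmetic: the paper cites the fact that $D$ and $D_f$ share eigenvalues, whereas you derive the equivalent intertwining $D_f(e^{f/2}\phi) = e^{f/2}D\phi$ directly and spell out the weighted integration-by-parts that the paper's one-line appeal to the weighted Lichnerowicz formula leaves implicit.
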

Corollary \ref{cor:vanishing-thm} extends Corollary \ref{cor:vanishing-easy} to include $m < 1-n$; this is not implied by the weighted Lichnerowicz formula since $R^m_f \leq R_f$ here. For its proof, we make the following observation. In addition to $(M,g)$ and its spinor bundle $\Sigma M$, consider the conformally changed manifold $(M,\tilde{g} = e^{-\frac{2f}{n-1}}g)$ and its spinor bundle $\tilde{\Sigma}M$. According to \cite[\S 2.3.5]{bourguignon2015spinorial}, there is a natural bundle isometry $\Phi: \Sigma M \to \tilde{\Sigma}M$, and the Dirac operators $D,\tilde{D}$ of the respective bundles are related by
\begin{equation} \label{eq:Df-D-relationship}
    D_f = D - \frac{1}{2}\nabla f \cdot = e^{-\frac{f}{n-1}} \Phi^{-1} \tilde{D} \Phi.
\end{equation}
Thus, the weighted Dirac operator on $(M,g,f)$ is obtained from the usual Dirac operator on $(M,\tilde{g})$ by conjugating by $\Phi$ and multiplying by a function.
Interestingly, $\tilde{g}$ is the same metric used in \S\ref{sec:PMTs} to prove our positive mass theorems.

\begin{proof}[Proof of Corollary \ref{cor:vanishing-thm}]
    By Lemma \ref{lem:R-of-conformal} (or \eqref{eq:Rtilde-SMMS}), the assumption that $R_f^m \geq 0$ and is positive somewhere imply that the scalar curvature $\tilde{R}$ of $\tilde{g}$ is $\geq 0$ and is positive somewhere. The Lichnerowicz formula applied to the spinor bundle $\tilde{\Sigma}M$ of $(M,\tilde{g})$ shows that the Dirac operator $\tilde{D}$ on $\tilde{\Sigma}M$ has trivial kernel. Then \eqref{eq:Df-D-relationship} shows that $D_f$ has trivial kernel on $\Sigma M$. As observed independently in \cite{baldauf2022spinors} and \cite{branding2022eigenvalue}, $D$ and $D_f$ have the same eigenvalues. Thus $D$ also has trivial kernel.
\end{proof}

\begin{remark}
    Corollary \ref{cor:vanishing-thm} cannot be extended to any $m \in [-1,0)$. In fact, \emph{every} closed manifold $M$ of dimension $\geq 3$ admits a metric $g$ and a function $f$ for which $R^m_f > 0$ whenever $m \in [-1,0)$.
    Indeed, by \cite[Theorem 3]{eliasson1971variations} there exists a metric $g$ on $M$ with $\int_M R \, d\vol_g > 0$,
    so the function
    \begin{equation}
        u := \frac{R}{2} - \frac{\int_M R \, d\vol_g}{2\vol(M,g)}
    \end{equation}
    satisfies $\int_M u \, d\vol_g = 0$. Thus $u = -\Delta f$ for some $f \in \C^\infty(M)$, so for all $m \in [-1,0)$ we have
    \begin{equation}
        R^m_f = R + 2\Delta f - \frac{m+1}{m}|\nabla f|^2 \geq R - 2u = \frac{\int_M R \, d\vol_g}{\vol(M,g)} > 0.
    \end{equation}
    Nonetheless it remains unclear whether Corollary \ref{cor:vanishing-thm} can be extended to also include all values of $m$ in the interval $(1-n,-1)$.
\end{remark}

Corollary \ref{cor:vanishing-thm} motivates the problem of finding $f \in \C^\infty(M)$ and $m \in \R\setminus[1-n,0]$ such that $R^m_f$ is a constant $\mu_m$, since if $\mu_m > 0$ then the corollary implies $M$ has no nontrivial harmonic spinors. The next proposition shows that this problem can be solved, and $\mu_m > 0$ when $R > 0$. While no new obstructions to harmonic spinors arise from this, we will see shortly that the constants $\mu_m$ relate to well-known lower bounds for the Dirac spectrum.

\begin{proposition}
    Let $(M,g)$ be a closed Riemannian manifold. For each $m \in \R \setminus \{0\}$, there is a unique constant $\mu_m \in \R$ and a smooth function $f \in \C^\infty(M)$ unique up to translation, such that $R^m_f = \mu_m$. In fact
    \begin{equation} \label{eq:principal}
        \mu_m = \inf_{\substack{v \in H^1(M,g), \\ \lVert v \rVert_{L^2(M,g)} = 1}} \int_M \left( \frac{4m}{m+1}|\nabla v|^2 + Rv^2 \right)\, d\vol_g.
    \end{equation}
\end{proposition}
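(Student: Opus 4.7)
The plan is to reduce the nonlinear PDE $R^m_f = \mu_m$ to a linear self-adjoint eigenvalue problem on the closed manifold $(M,g)$ and invoke standard elliptic spectral theory. The key is a ``Cole--Hopf''-type substitution $v = e^{-\beta f}$ which linearizes the equation for one specific choice of $\beta$. A direct computation gives $\nabla f = -(\beta v)^{-1}\nabla v$ and $\Delta f = \beta^{-1}v^{-2}|\nabla v|^2 - \beta^{-1}v^{-1}\Delta v$, so substituting into $R_f^m = R + 2\Delta f - \tfrac{m+1}{m}|\nabla f|^2$ yields
\begin{equation}
    R^m_f = R - \tfrac{2}{\beta v}\Delta v + \tfrac{|\nabla v|^2}{v^2}\left(\tfrac{2}{\beta} - \tfrac{m+1}{m\beta^2}\right),
\end{equation}
and the $|\nabla v|^2/v^2$ term cancels exactly when $\beta = (m+1)/(2m)$. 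With this choice, the equation $R^m_f = \mu_m$ becomes the linear eigenvalue problem
\begin{equation}
    Lv := -\tfrac{4m}{m+1}\Delta v + Rv = \mu_m v.
\end{equation}

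For $m \in (-\infty,-1)\cup(0,\infty)$, the quotient $\tfrac{4m}{m+1}$ is positive, so $L$ is a self-adjoint, uniformly elliptic Schr\"odinger-type operator, and its Rayleigh quotient coincides with the functional in \eqref{eq:principal}. Standard elliptic theory on the closed manifold $M$---Rellich--Kondrachov compactness, the spectral theorem, and the strong maximum principle / Krein--Rutman argument---gives a simple principal eigenvalue $\mu_m$ admitting a positive eigenfunction $v \in \C^\infty(M)$ (smoothness by elliptic regularity); then $f := -\tfrac{2m}{m+1}\log v \in \C^\infty(M)$ solves $R^m_f = \mu_m$. For uniqueness, any $f$ with $R^m_f$ equal to some constant $\mu$ produces, via the same substitution, a positive eigenfunction of $L$ with eigenvalue $\mu$; since only the principal eigenvalue of $L$ admits a positive eigenfunction, we get $\mu = \mu_m$, and simplicity forces $v$ to be a positive scalar multiple of the chosen principal eigenfunction, so $f$ is unique up to an additive constant.

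The main obstacle is the exceptional range $m \in [-1,0)$. The case $m = -1$ is actually straightforward: the equation collapses to the linear Poisson equation $2\Delta f = \mu_{-1} - R$, solvable by Fredholm theory iff $\mu_{-1}$ equals the $g$-average of $R$, with $f$ then unique up to a constant. For $m \in (-1,0)$ the leading coefficient of $L$ has the opposite sign, so the infimum in \eqref{eq:principal} is $-\infty$; the argument goes through with $\inf$ replaced by $\sup$ (equivalently, by running the same argument on $-L$), taking the top of the spectrum of $L$, which again admits a positive eigenfunction via the maximum principle.
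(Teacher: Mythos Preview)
Your proof is correct and follows essentially the same route as the paper: the same substitution $v = e^{-\frac{m+1}{2m}f}$ linearizes $R^m_f = \mu_m$ into the eigenvalue problem $-\tfrac{4m}{m+1}\Delta v + Rv = \mu_m v$, and the conclusion then follows from standard spectral theory for Schr\"odinger operators on closed manifolds. You are in fact more careful than the paper in flagging the exceptional range $m \in [-1,0)$, where the formula \eqref{eq:principal} is either undefined ($m=-1$) or yields $-\infty$ ($m\in(-1,0)$); the paper's proof does not address these cases explicitly.
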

\begin{proof}
    Inspired by \cite[Theorem 2]{dobarro1987scalar}, we set $u := e^{-\frac{m+1}{2m}f}$. A careful computation yields
    \begin{equation}
        -\frac{4m}{m+1} \Delta_g u + Ru = R_f^m u.
    \end{equation}
    Thus, solving $R^m_f = \mu_m$ is equivalent to finding a positive solution $u$ to the eigenvalue problem
    \begin{equation}
        Lu := \left(-\frac{4m}{m+1} \Delta_g + R \right)u = \mu_m u.
    \end{equation}
    By standard elliptic theory, only the principal eigenfunctions of $L$ do not change sign; thus, $\mu_m$ must equal the principal eigenvalue, which is given by \eqref{eq:principal}. Also the $\mu_m$-eigenspace is one-dimensional and consists of smooth functions. Thus, $u$ exists uniquely up to positive scaling, and $f$ is unique up to translation.
\end{proof}

Now let $(M^n,g)$ be a closed spin manifold with Dirac operator $D$. Write $\lambda_1(\cdot)$ for the lowest eigenvalue of a second-order operator. Friedrich's \cite{friedrich1980eigen} and Hijazi's \cite{hijazi1986conformal} inequalities respectively imply that
\begin{align}
    \lambda_1(D)^2 &\geq \frac{n}{4(n-1)} \lambda_1(-4\Delta + R), \label{eq:Friedrich} \\
    \lambda_1(D)^2 &\geq \frac{n}{4(n-1)}\lambda_1(\Box), \label{eq:Hijazi}
\end{align}
where $\Box = -\frac{4(n-1)}{n-2}\Delta + R$ is the conformal Laplacian.
Meanwhile, observe using \eqref{eq:principal} that
\begin{align}
    \mu_m \uparrow \lambda_1(-4\Delta + R) & \quad \text{as } 0 < m \uparrow \infty, \\
    \mu_m \uparrow \lambda_1(\Box) & \quad \text{as } -\infty < m \uparrow 1-n.
\end{align}
Therefore, the inequalities \eqref{eq:Friedrich} and \eqref{eq:Hijazi} are implied by the family of weaker inequalities
\begin{equation}
    \lambda_1(D)^2 \geq \frac{n}{4(n-1)}\mu_m, \quad m \in \R \setminus [1-n,0].
\end{equation}
This family of inequalities interpolates between the inequalities of Friedrich and Hijazi.

\bibliographystyle{plain} % We choose the "plain" reference style
\bibliography{Refs} % Entries are in the refs.bib file

\end{document}